\documentclass[11pt, a4paper, leqno]{scrartcl}
\usepackage{amsmath}
\usepackage{amssymb}
\usepackage{mathrsfs}
\usepackage{amsthm}  
\usepackage{microtype} 
\usepackage{enumitem}
\usepackage{xcolor}
\usepackage{comment}
\usepackage{authblk}
\usepackage[hidelinks]{hyperref}
\usepackage[utf8]{inputenc}
\usepackage[T1]{fontenc}

\theoremstyle{definition}
\newtheorem{definition}{Definition}
\newtheorem{theorem}[definition]{Theorem}
\newtheorem{corollary}[definition]{Corollary}
\newtheorem{lemma}[definition]{Lemma}
\newtheorem{proposition}[definition]{Proposition}

\newtheorem{fact}[definition]{Fact}
\newtheorem{claim}[definition]{Claim}
\newtheorem{Proof}[definition]{Proof}

\newcommand{\cL}{\mathcal L}

\newcommand{\sln}{\mathcal L^{\text{s},n}}

\newcommand{\ULS}{\text{ULST}}
\newcommand{\SULS}{\text{SULST}}

\newcommand{\crit}{\text{crit}}
\newcommand{\cof}{\text{cof}}
\newcommand{\WVP}{\text{WVP}}
\newcommand{\VP}{\text{VP}}

\newcommand{\dom}{\text{dom}}

\newcommand{\ZFC}{\text{ZFC}}
\newcommand{\comp}{\text{comp}}

\newcommand{\Ord}{\text{Ord}}

\newcommand{\ran}{\text{ran}}

\newcommand{\LST}{\text{LST}}

\newcommand\blfootnote[1]{%
	\begingroup
	\renewcommand\thefootnote{}\footnote{#1}%
	\addtocounter{footnote}{-1}%
	\endgroup
}

\title{Model theory of class-sized logics}
\author{Jonathan Osinski\thanks{Fachbereich Mathematik, Universität Hamburg, Hamburg, Germany.} \thanks{\emph{Current address}: Czech Academy of Sciences, Institute of Computer Sciences, Pod Vodárenskou věží 2, 182 07 Prague, Czech Republic. \\Email: osinski@cs.cas.cz} and   Trevor Wilson\thanks{Department of Mathematics, Miami University, Oxford, Ohio, 45056, USA. \\ Email: twilson@miamioh.edu}}

\begin{document}

\maketitle

\begin{abstract}
	\noindent \textbf{Abstract.} We study compactness and Löwenheim-Skolem properties of fragments of the class-sized logic $\cL_{\infty \infty}$ and of class-sized versions of second-order and sort logics. In these fragments, certain combinations of infinitary quantifiers and boolean connectives are banned. While model-theoretic properties fail for unrestricted class logics, this drastically changes in our more restricted setting. We show that model-theoretic properties of class logics characterise a wide array of large cardinals, and that some of them can even be obtained in $\ZFC$. In particular, we give a characterisation of \emph{Weak Vopěnka's Principle} and \emph{Ord is Woodin} by downwards Löwenheim-Skolem properties, and a characterisation of \emph{Shelah cardinals} by a compactness property of class-sized logics. We further strengthen many known results about properties of set-sized logics by studying how they transfer to class-sized extensions.  
\end{abstract}

	\section{Introduction}
	\noindent In\blfootnote{The results of this article also appear in the first author's PhD thesis \cite[Chapter 5]{osinskiPhD}.} many discussions of model theory of strong logics $\cL$, i.e., of extensions of first-order logic, it has become somewhat standard to reserve the term \emph{logic} to set-sized languages, i.e., to languages for which, given any set of non-logical symbols $\tau$, the collection $\cL[\tau]$ of $\cL$-sentences over $\tau$ is again a set (cf., e.g., \cite{mak1985, boney2024model, osinskiPhD, lücke2024weak} as examples for this naming practice). As a consequence, some languages classically studied in abstract model theory are not considered to be a logic in this sense, e.g., the infinitary language $\cL_{\infty \infty}$ allowing for conjunctions and disjunctions over arbitrarily sized sets of sentences, and quantification over arbitrarily sized sets of variables. Let us call $\cL$ a \emph{class-sized logic} or \emph{class logic} when $\cL[\tau]$, the collection of $\cL$-sentences over $\tau$, is a proper class for some $\tau$. The reason that class-sized logics are often not considered is that many of the usual properties studied in abstract model theory are easily seen to fail for $\cL_{\infty \infty}$ (cf. Proposition \ref{prop:HanfLS}), arguably the easiest example of a class-sized logic available. Hence, already $\cL_{\infty \infty}$ does not have much of an interesting model theory. 
	
	In this article, we show that this drastically changes when considering class-sized logics which are not closed under negation. We study class-sized subsystems of $\cL_{\infty \infty}$, as well as of stronger languages like class-sized versions of second-order and sort logics. We show that assertions about properties of many of these systems are equivalent to the existence of large cardinals, and that some of them even have model-theoretic properties obtainable in ZFC. In some cases, these results strengthen known theorems about properties of set-sized logics. In others, these results give completely new characterizations of large cardinals.
	
	As a motivation, let us mention some background about the relation of model theory of strong logics and large cardinals. How large cardinals are characterized by model-theoretic properties of strong logics is a classical realm of study since Tarski \cite{tar1962} asked the question how the Compactness Theorem for first-order logic carries over to infinitary logics of the form $\cL_{\kappa \kappa}$ for an uncountable regular cardinal $\kappa$. The answer led to the introduction of weakly and strongly compact cardinals. Until the 1980s, many prominent large cardinal axioms were found to be equivalent to statements about model-theoretic properties of strong logics (cf., e.g., \cite{mag1971, ben1978, mak1985}). Recently, this type of research had a resurgence, and many more connections of large cardinals to compactness properties of logics, generalizing the Compactness Theorem, as well as to (upward and downward) Löwenheim-Skolem properties, generalizing the Löwenheim-Skolem Theorems, were discovered (cf. \cite{mag2011, bagaria2016symbiosis, bon2020, hay2022, boney2024model, holy2024, osinskiPhD, galeotti2025bounded, gitman2025upward, lücke2024weak, boney2025, osinski2024Henkin}).

	The article is structured as follows. In Section \ref{sec:Prelim} we recall some basic definitions and background from abstract model theory, introduce the class-sized logics we will work with, and collect definitions and background on large cardinal notions we will consider. In Section \ref{sec:ClassComp} we study how switching from the set-sized versions of first-order logic, infinitary logics, second-order logics, and sort logics, to certain class extensions impacts their respective compactness and upward Löwenheim-Skolem properties. We show that in some cases, this switch comes with a jump in large cardinal strength of the considered property, while in others, the strength stays the same. In Section \ref{sec:ClassWVP}, we study downwards Löwenheim-Skolem properties of class extensions of sort logic. In this vein, we obtain the first known characterization of \emph{$\Pi_n$-strong cardinals}, \emph{Weak Vopěnka's Principle}, and \emph{$\Ord$ is Woodin} by Löwenheim-Skolem properties. The connection between $\Pi_n$-strong cardinals, Weak Vopěnka's Principle, and \emph{$\Ord$ is Woodin}  was recently uncovered by work of Wilson \cite{wilson2022large} and Bagaria and Wilson \cite{bagaria2023weak}. That the usual \emph{Vopěnka's Principle} has several model-theoretic characterizations is well-known. For instance, it is equivalent to axiom schemas stating that all logics have certain compactness properties and downwards Löwenheim-Skolem properties, respectively. Whether the same is true for Weak Vopěnka's Principle has been open. We give a positive answer by showing that  Weak Vopěnka's Principle is equivalent to the simultaneous existence of certain Löwenheim-Skolem numbers for a large collection of class logics. Independently, also Boney and Osinski \cite{boney2025}, and, Holy, Lücke, and Müller \cite{holy2024}, showed that Weak Vopěnka's Principle can be characterized by model-theoretic properties of strong logics. Both approaches use compactness principles though. By our characterization by Löwenheim-Skolem properties, we thus strengthen the model-theoretic analogy between Vopěnka's Principle and Weak Vopěnka's Principle in also carrying over to the latter types of properties. Finally, in Section \ref{sec:Shelah} we provide the (to our best knowledge) first known characterization of \emph{Shelah cardinals} by properties of strong logics, using a property akin to weak compactness of a class logic.
	 
	\section{Preliminaries}\label{sec:Prelim}
	\subsection{Logics}
	Let us start by making precise what we understand by a \emph{logic}, as an abstract object of study. The definition is the standard one taken from \cite[Chapter II, Definition 1.1.1]{bar1985}. Note that we do \emph{not} follow the naming practice described in the beginning, in which a logic has to have set-many sentences over set-sized vocabularies. We will consider \emph{many-sorted} vocabularies, as one of the logics we will consider is \emph{sort logic}, whose semantics is naturally expressed in this context. As in standard definitions in first-order model theory, a \emph{(many-sorted) vocabulary} $\tau$ is a set consisting of finitely many sort symbols and an arbitrary amount of finitary relation, function, and constant symbols. We denote the set of sort symbols in $\tau$ by $\text{s}(\tau)$. We demand $\text{s}(\tau) \neq \emptyset$. A \emph{$\tau$-structure} is a tuple consisting of non-empty sets $A_s$ for every sort symbol $s \in \tau$ and interpretations for the relation, function, and constant symbols. We write $A$ for $\bigcup \{A_s \colon s \in \text{s}(\tau)\}$. If $\tau$ and $\sigma$ are vocabularies, a bijection $f: \tau \rightarrow \sigma$ is called a \emph{renaming} if it respects arities and restricts to bijections of the respective subsets of sort, relation, function, and constant symbols in $\tau$ and $\sigma$. If $\mathcal A$ is a $\tau$-structure and $f: \tau \rightarrow \sigma$ a renaming, then $f$ induces a $\sigma$-structure we will denote by $f(\mathcal A)$ on $A$ by letting $A_{f(s)} = A_s$ for $s \in \text{s}(\tau)$ and $f(r)^{f(\mathcal A)} = r^\mathcal A$ for every $r \in \tau \setminus \text{s}(\tau)$. An isomorphism between $\tau$-structures $\mathcal A$ and $\mathcal B$ is a bijection $A \rightarrow B$ respecting the sorts, relations, functions, and constants in the obvious way. We write $\mathcal A \cong \mathcal B$ if there is an isomorphism between $\mathcal A$ and $\mathcal B$.
	\begin{definition}\label{def:abstr:log}
		A \textit{logic} $\cL$ is a pair consisting of a definable class function that maps every vocabulary $\tau$ to a class $\cL[\tau]$, called the \textit{class of $\cL$-sentences over $\tau$}, and a definable class relation $\models_{\cL}$, called the \textit{satisfaction relation of $\cL$}, such that:
		\begin{enumerate}
			\item[(i)] If $\mathcal A \models_{\cL} \varphi$, then $\varphi \in \cL[\tau]$ for some vocabulary $\tau$ and $\mathcal A$ is a $\tau$-structure. In this case we say that \emph{$\mathcal A$ is a model of $\varphi$}. If the logic is clear from context, we also simply write $\mathcal A \models \varphi$. 
			\item[(ii)] If $\sigma \subseteq \tau$ for vocabularies $\tau$ and $\sigma$, then $\cL[\sigma] \subseteq \cL[\tau]$.
			\item[(iii)] If $\mathcal A \cong \mathcal B$ for $\tau$-structures $\mathcal A$ and $\mathcal B$, then for every $\varphi \in \cL[\tau]$:
			\[
			\mathcal A \models_{\cL} \varphi \text{ iff } \mathcal B \models_{\cL} \varphi.
			\]
			\item[(iv)] If $\varphi \in \cL[\sigma]$ and $\sigma \subseteq \tau$ for vocabularies $\tau$ and $\sigma$, then for every $\tau$-structure $\mathcal A$:
			\[
			\mathcal A \models_{\cL} \varphi \text{ iff } (\mathcal A \upharpoonright \sigma) \models_{\cL} \varphi.
			\]
			\item[(v)] If $\tau$ is a vocabulary and $f: \tau \rightarrow \sigma$ is a renaming, then for every $\varphi \in \cL[\tau]$ there is a $\psi \in \cL[\sigma]$ such that for every $\tau$-structure $\mathcal A$:
			\[
			\mathcal A \models_\cL \varphi \text{ iff } f(\mathcal A) \models_\cL \psi.
			\]
		\end{enumerate}
		If there is a vocabulary $\tau$ for which $\cL[\tau]$ forms a proper class, we call $\cL$ a \emph{class-sized logic}. If the following condition is fulfilled, we call $\cL$ a \emph{set-sized logic}.
		\begin{itemize}
			\item[(vi)] For every vocabulary $\tau$, $\cL[\tau]$ is a set, and $\cL$ has an \emph{occurrence number} $\text{o}(\cL)$, which is the smallest cardinal $\kappa$ such that for every vocabulary $\tau$ and every $\varphi \in \cL[\tau]$, there is some $\tau_0 \subseteq \tau$ with $|\tau_0| < \kappa$ and $\varphi \in \cL[\tau_0]$.  
		\end{itemize}
	\end{definition}
	Let us now define the logics we will consider. Intuitively, we study fragments of class logics like $\cL_{\infty \infty}$ that drop some of the boolean connectives or quantifiers. For instance, the logic $\cL (\wedge^\infty, \exists^\infty, \forall^\infty)$ is allowed to form conjunctions over arbitrarily large sets of sentences and to existentially and universally quantify over arbitrarily large sets of first-order variables, but it can only form \emph{finitary} disjunctions. To make sure disjunctions over arbitrary sets are not expressable, $\cL (\wedge^\infty, \exists^\infty, \forall^\infty)$ is \emph{not} closed under negation. Technically, this is achieved by demanding that infinitary boolean connectives and quantifiers cannot occur inside of negations.
	
	Let us make this more precise and give a general definition that allows to vary our notation to denote many of the logics we are interested in. To unify notation, below we use expressions like $\kappa = \infty$. When writing for a class $S$ that $|S| < \infty$, we simply mean that $S$ is a set. 
	\begin{definition}\label{def:classlogic}
		For $i \in \{0, \dots, 3\}$, let $\kappa_i$ be a regular cardinal or $\kappa_i = \infty$. The collection of formulas of the logic $\cL(\wedge^{\kappa_0}, \vee^{\kappa_1}, \exists^{\kappa_2}, \forall^{\kappa_3})$ over a vocabulary $\tau$ consists of the following formulas:
		\begin{itemize}
			\item All formulas of first-order logic over $\tau$.
			\item If $S \subseteq \cL(\wedge^{\kappa_0}, \vee^{\kappa_1}, \exists^{\kappa_2}, \forall^{\kappa_3})$ and $|S| < \kappa_0$, then $\bigwedge S \in \cL(\wedge^{\kappa_0}, \vee^{\kappa_1}, \exists^{\kappa_2}, \forall^{\kappa_3}).$
			\item If $S \subseteq \cL(\wedge^{\kappa_0}, \vee^{\kappa_1}, \exists^{\kappa_2}, \forall^{\kappa_3})$ and $|S| < \kappa_1$, then $\bigvee S \in \cL(\wedge^{\kappa_0}, \vee^{\kappa_1}, \exists^{\kappa_2}, \forall^{\kappa_3})$.
			\item If $\varphi \in \cL(\wedge^{\kappa_0}, \vee^{\kappa_1}, \exists^{\kappa_2}, \forall^{\kappa_3})$ and $W$ is a set of first-order variables with $|W| < \kappa_2$, then $\exists W \varphi \in \cL(\wedge^{\kappa_0}, \vee^{\kappa_1}, \exists^{\kappa_2}, \forall^{\kappa_3})$. 
			\item If $\varphi \in \cL(\wedge^{\kappa_0}, \vee^{\kappa_1}, \exists^{\kappa_2}, \forall^{\kappa_3})$ and $W$ is a set of first-order variables with $|W| < \kappa_3$, then $\forall W \varphi \in \cL(\wedge^{\kappa_0}, \vee^{\kappa_1}, \exists^{\kappa_2}, \forall^{\kappa_3})$. 
		\end{itemize}
		The semantics of $\cL(\wedge^{\kappa_0}, \vee^{\kappa_1}, \exists^{\kappa_2}, \forall^{\kappa_3})$ are exactly as expected, but to fix notation for usage in proofs we will make a few conventions explicit. A \emph{variable assignment} for a $\tau$-structure $\mathcal A$ is a map $f: X \to A$ where $X$ is a set of variables. We assume variables $x$ to come with a fixed sort $s_x \in \text{s}(\tau)$ and that $f(x) \in A_{s_x}$. If $S$ is a set of variables, then a variable assignment $g$ for $\mathcal A$ is called an \emph{$S$-variant of $f$} if $S \subseteq \dom(g)$ and $f$ and $g$ agree on $X \setminus S$. If $S = \{x\}$ is a singleton, we also call $g$ an \emph{$x$-variant}. If $X$ contains the set of free variables of some $\chi \in \cL(\wedge^{\kappa_0}, \vee^{\kappa_1}, \exists^{\kappa_2}, \forall^{\kappa_3})$, we write
		\[
		\mathcal A \models_{\cL(\wedge^{\kappa_0}, \vee^{\kappa_1}, \exists^{\kappa_2}, \forall^{\kappa_3})} \chi[f]
		\]
		to indicate that $\mathcal A$ is a model of $\varphi$ when its free variables are interpreted according to $f$. The semantics of the quantifiers can be expressed using these conventions, e.g.,
		\begin{quote}
			$\mathcal A \models_{\cL(\wedge^{\kappa_0}, \vee^{\kappa_1}, \exists^{\kappa_2}, \forall^{\kappa_3})} \exists W \varphi[f]$ iff there is a $W$-variant $g$ of $f$ such that $\mathcal A \models_{\cL(\wedge^{\kappa_0}, \vee^{\kappa_1}, \exists^{\kappa_2}, \forall^{\kappa_3})} \varphi[g]$. 
		\end{quote} 
	\end{definition}
	Note that in $\cL(\wedge^{\kappa_0}, \vee^{\kappa_1}, \exists^{\kappa_2}, \forall^{\kappa_3})$, negations can only be applied to finitary first-order formulas. Alternatively, we could have restricted attention to formulas in negation normal form, i.e., negation can only be applied to atomic formulas. By varying the $\kappa_i$ in the above definition, we can denote several set- and class-sized logics. For instance, $\cL(\wedge^{\omega}, \vee^{\omega}, \exists^{\omega}, \forall^{\omega})$ is first-order logic, for $\kappa$ any regular cardinal $\cL(\wedge^{\kappa}, \vee^{\kappa}, \exists^{\kappa}, \forall^{\kappa})$ is the usual infinitary logic $\cL_{\kappa \kappa}$, and $\cL(\wedge^{\infty}, \vee^{\infty}, \exists^{\infty}, \forall^{\infty})$ is the class logic $\cL_{\infty \infty}$. By this notation, we are flexible in denoting logics that forbid some infinitary logical connectives while permitting others. For instance, the logic $\cL^* = \cL(\wedge^{\infty}, \vee^{\omega}, \exists^{\omega}, \forall^{\omega})$ has arbitrarily large conjunctions, but only finitary disjunctions and quantifiers. If $\kappa_i = \omega$ for some $i$, we will omit the corresponding logical symbol when denoting the logic. E.g., for the just mentioned logic $\cL^*$ we will simply write $\cL(\wedge^\infty)$ according to this convention. Note that if $\kappa_i = \infty$ for some $i$, then $\cL(\wedge^{\kappa_0}, \vee^{\kappa_1}, \exists^{\kappa_2}, \forall^{\kappa_3})$ is a class-sized logic. We analogously define fragments of class-versions of second-order logic. We write $\cL^2$ for the usual finitary second-order logic. The definition of its class version $\cL^2(\wedge^{\kappa_0}, \vee^{\kappa_1}, \exists^{\kappa_2}, \forall^{\kappa_3})$ is analogous to Definition \ref{def:classlogic}, just that we allow all formulas of $\cL^2$, in the quantifier cases we additionally allow sets of second-order variables, and we let variable assignments $f$ be able to take second-order variables $X$ and take values $f(X) \subseteq A^n$.
	
	Finally, we consider class extensions of \emph{sort logics}, a collection of logics exceeding second-order logic in expressive strength which were introduced by Väänänen \cite{vää1979}. For their motivation and intuition about sort logics, cf. \cite{vää2014}. Their main feature are \emph{sort quantifiers} $\widetilde \exists X$ and $\widetilde \forall X$. They take second-order variables $X$ and a formula $\widetilde \exists X \varphi (X)$ is satisfied by some structure $\mathcal A$ if $\mathcal A$ can be expanded by additional universes with a relation $X$ between them that satisfies $\varphi$. For definability of truth reasons, sort logics $\sln$ are graded by the natural numbers $n$ into a hierarchy of increasing strength. Here, $\sln$ allows for $n$ alternations of existential and universal sort quantifiers. We will work with infinite versions of sort logics, called $\sln_{\kappa \omega}$ for some regular cardinal $\kappa$, which allow for infinite conjunctions and disjunctions of size $< \kappa$. If $\kappa = \omega$, we simply write $\sln$ for $\sln_{\omega \omega}$. As we will not need more properties of sort logics than the folklore ones summarized below in Fact \ref{factSL}, we refer the reader to \cite[Section 1.2.4]{osinskiPhD} for a precise constructions of sort logic's syntax and semantics. To state the properties we need, we follow \cite{bag2012} in denoting by $C^{(n)}$ the club class of ordinals $\alpha$ such that $V_\alpha$ is a $\Sigma_n$-elementary substructure of the universe.
	\begin{fact}\label{factSL}
		\begin{enumerate}
			\item[(1)] For every $n \geq 1$, there is a sentence $\Phi^{(n)} \in \sln[\{\in\}]$ such that $(M, E) \models_{\sln} \Phi^{(n)}$ iff there is some ordinal $\alpha \in C^{(n)}$ with $(M,E) \cong (V_\alpha, \in)$.
			\item[(2)] Let $\alpha \in C^{(n)}$ and $\tau \in V_\alpha$ be a vocabulary. Then for every  $\tau$-structure $\mathcal A \in V_\alpha$ and every $\varphi \in \sln \cap V_\alpha$:
			\[
			\mathcal A \models_{\sln} \varphi \text{ iff } V_\alpha \models ``\mathcal A \models_{\sln} \varphi".
			\] 
		\end{enumerate}
	\end{fact}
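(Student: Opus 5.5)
Both parts will be proved by a simultaneous induction on $n$, using the fact that sort quantifiers $\widetilde\exists X$ and $\widetilde\forall X$ amount to unbounded set-theoretic quantification over structures. The key intermediate claim will be the following.

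\emph{Claim.} For each $n$, the relation ``$\mathcal A \models_{\sln} \varphi$'' is definable by a formula of complexity $\Sigma_n$, or at worst $\Delta_{n+1}$ with the relevant half being $\Sigma_n$, uniformly in the parameters $\mathcal A$ and $\varphi$.

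I will first establish the claim by induction on the syntax of $\sln$-formulas. The base is standard: satisfaction for $\cL^2$, and for the $\kappa\omega$-infinitary Boolean combinations, is $\Delta_1$ in the parameters. It is computed inside any transitive model containing $\mathcal A$, its power sets, and $\varphi$, and the existence of such a model is $\Sigma_1$ while its correctness is $\Pi_1$. A block of existential sort quantifiers adds one unbounded existential quantifier, namely ``there is an expansion $\mathcal A'$ with new universes and a relation $X$ such that $\mathcal A' \models \psi$''. Universal sort quantifiers dually add a universal quantifier. Counting alternations then gives that $n$ alternations yield $\Sigma_n$ (respectively $\Pi_n$).

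Part (2) will then follow directly. If $\alpha \in C^{(n)}$ and $\mathcal A, \varphi, \tau \in V_\alpha$, the defining formula of the claim has all its parameters in $V_\alpha$. Since $V_\alpha \prec_{\Sigma_n} V$, truth of this formula is absolute between $V_\alpha$ and $V$. One must also check that $V_\alpha$ computes the $\Delta_1$ base case correctly; this holds because $\alpha \in C^{(1)}$ implies $V_\alpha = H_\alpha$. For the $\Pi_n$ side one uses the same elementarity applied to negations.

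For part (1), I will build $\Phi^{(n)}$ by recursion on $n$.
\begin{itemize}
\item[(a)] For $n=1$: second-order logic already expresses that $(M,E)$ is well-founded and extensional, and that $(M,E)$ is isomorphic to some $V_\alpha$. The latter means every subset of an element's extension is realized, and ranks are closed. Membership of $\alpha$ in $C^{(1)}$ is the statement $V_\alpha = H_\alpha$, i.e.\ every transitive set of size less than $\alpha$ appears. This I will express with one universal sort quantifier: for every structure which is well-founded, extensional, and hereditarily of size less than that of $M$, it embeds as an element. Alternatively, one can use that $C^{(1)}$ is exactly the class of uncountable cardinals $\alpha$ with $V_\alpha = H_\alpha$.
\item[(b)] For the step $n \to n+1$: $\alpha \in C^{(n+1)}$ iff $\alpha \in C^{(n)}$ and, for every $a \in V_\alpha$ and every $\Sigma_{n+1}$ formula $\exists x\,\theta(x,a)$ with $\theta \in \Pi_n$, whenever the formula holds in some $V_\beta$ with $\beta \in C^{(n)}$ and $\beta > \alpha$, it already holds in $V_\alpha$. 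The restriction $\beta > \alpha$ is harmless because $C^{(n)}$ is unbounded. I will formalize this in $\mathcal L^{s,n+1}$ by a universal sort quantifier over expansions coding such a $V_\beta$ with $V_\alpha$ as an initial segment. Inside, $\Phi^{(n)}$ asserts $\beta \in C^{(n)}$, and a universal Gödel-coded truth predicate handles the infinitely many formulas in a single sentence. Using $\Phi^{(n)}$ inside a universal sort quantifier costs exactly one extra alternation.
\end{itemize}

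The main obstacle I expect is the precise alternation bookkeeping. I need to confirm that embedding $\Phi^{(n)}$ (which has $n$ alternations) beneath a universal sort quantifier stays within $n+1$ alternations. I also need the matching bound in the claim to be exactly $\Sigma_n$ rather than $\Sigma_{n+1}$, so that $\alpha \in C^{(n)}$ suffices in (2). If the count overshoots by one, I will try absorbing the second-order base (which is $\Delta_1$, hence $\Delta_2$) into the first sort-quantifier block, since $\Sigma_1$ and $\Delta_1$ layers collapse there.
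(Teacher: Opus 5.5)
Your overall plan is the standard one, and your characterization of $C^{(n+1)}$ in terms of $C^{(n)}$ is correct. The plan is to bound $\sln$-satisfaction by $\Sigma_n/\Pi_n$ and use $V_\alpha\prec_{\Sigma_n}V$ for (2), and to build $\Phi^{(n+1)}$ from $\Phi^{(n)}$. The gap is that the complexity claim at its heart is false as you set it up, and the off-by-one you worry about at the end is not bookkeeping.

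\textbf{The base is $\Delta_2$, not $\Delta_1$.} Second-order satisfaction is $\Delta_2$. Your own description gives exactly that: a transitive set containing $\mathcal P(A^{<\omega})$ whose existence is $\Sigma_1$ but whose correctness is $\Pi_1$ yields a $\Sigma_2$ and a $\Pi_2$ definition. It cannot be $\Delta_1$: $V_{\omega\cdot 2}^V$ satisfies Magidor's $\Phi$ in $V$ but not in $V[g]$ for a Cohen real $g$, so $(x,\in)\models\Phi$ is not a $\Sigma_1$ property of $x$.

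\textbf{Your proposed absorption fails.} A second-order $\forall Y$ ranging over subsets of a \emph{newly introduced} sort is a genuinely unbounded universal set quantifier, so stacking sort quantifiers on $\cL^2$-matrices adds levels on top of $\Delta_2$. Concretely, suppose $\mathcal L^{\text{s},1}$ were $\cL^2$ plus one block of sort quantifiers, and an inaccessible exists. Take $\theta$ to be the first-order sentence saying there is an inaccessible cardinal. Then $\widetilde\exists(N,E')\,[\Phi(N,E')\wedge\theta^{(N,E')}]$ holds in every structure. Yet $V_\alpha$ for $\alpha=\min C^{(1)}$ (a $\beth$-fixed point of cofinality $\omega$) evaluates it as false, because every candidate expansion in $V_\alpha$ is a copy of some $V_\beta$ with $\beta<\alpha$. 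So under the reading your count uses, (2) is false for $n=1$, and no argument along your lines can close the gap.

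\textbf{What you need to do.} First fix the definition of $\sln$ (the paper defers it to the first author's thesis) in a form that makes the Fact true, then redo the count. Two consistent options:
\begin{itemize}
\item Count second-order quantifiers in the scope of a sort quantifier as alternations (they are simulable by sort quantifiers). The genuine base is then $\cL_{\omega\omega}$- or $\cL_{\kappa\omega}$-satisfaction, which really is $\Delta_1$. Second-order quantifiers over the original universe $A$ are harmless, since every limit $V_\alpha$ computes $\mathcal P(A^k)$ correctly.
\item Use the shifted indexing in which $\mathcal L^{\text{s},1}$ is just $\cL^2$. Then you have $n-1$ sort blocks over a $\Delta_2$ matrix, and the case $n=1$ of (2) comes from power-set correctness of limit $V_\alpha$, not from $\Sigma_1$-elementarity.
\end{itemize}
With either option your count gives $\Sigma_n/\Pi_n$. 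It is cleanest to prove (2) by induction on formulas, using $\Sigma_n$-elementarity only to pull a witnessing expansion into $V_\alpha$. This also avoids having to check that $V_\alpha$ verifies the equivalence between the official satisfaction definition and your $\Sigma_n$ formula.

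\textbf{Consequences for (1).} Your sort-quantified version of (a) has shape $\widetilde\forall\exists$: well-foundedness of the new sort sits in the antecedent and the embedding into $M$ in the consequent. Use your alternative instead, which is $\Pi^1_1$ over $M$ itself: $\alpha>\omega$ and no element of $M$ admits an injection of the ordinals of $M$. In (b), arrange $\Phi^{(n)}$ in universal form, matching $C^{(n)}\in\Pi_n$. Placed negatively under one $\widetilde\forall$, it then costs exactly one additional level, as required.
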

	For a proof of Fact \ref{factSL} consider, e.g., \cite[Corollary 1.2.17 and Corollary 1.2.21]{osinskiPhD}. Note that item (1) is akin to the following result by Magidor.
	
	\begin{fact}[Magidor \cite{mag1971}] \label{fact:MagPhi}
		There is a sentence of second-order logic $\cL^2$, known as \emph{Magidor's $\Phi$}, such that $(M,E) \models_{\cL^2} \Phi$ iff there is some limit ordinal $\alpha$ with $(M, E) \cong (V_\alpha, \in)$.
	\end{fact}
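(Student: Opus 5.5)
The plan is to write $\Phi$ as the conjunction of a few first-order axioms with two genuinely second-order clauses. The second-order clauses pin down well-foundedness and the true power set. The first-order axioms force the transitive collapse to be a rank-initial segment of limit height.

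\textbf{The sentence.} Concretely, $\Phi$ will state the following.
\begin{enumerate}
\item[(a)] $E$ is well-founded: $\forall X\,(\exists x\, X(x) \to \exists x\,(X(x)\wedge \forall y\,(X(y)\to \neg\, y E x)))$.
\item[(b)] $E$ is extensional.
\item[(c)] Full separation/power set: for every $x$ and every unary $X$ with $\forall y\,(X(y)\to yEx)$, there is $z$ whose $E$-extension is exactly $X$.
\item[(d)] A finite list of first-order axioms. These are pairing, union, power set, and the existence of the ordinal rank and of $V_\beta$ for each ordinal $\beta$. More precisely, every $x$ lies in some set satisfying the first-order definition of ``$V_\beta$''.
\item[(e)] There is no largest ordinal.
\end{enumerate}
The first-order definition in (d) says: a function $f$ on $\beta+1$ with $f(0)=\emptyset$, $f(\gamma+1)=\mathcal P(f(\gamma))$, and unions at limits.

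\textbf{Easy direction.} If $(M,E)\cong(V_\alpha,\in)$ with $\alpha$ a limit, every clause holds. Well-foundedness and extensionality are clear. For (c), any subset of an element of $V_\alpha$ is again in $V_\alpha$. Clauses (d) and (e) hold because $\alpha$ is a limit, so $V_\alpha$ is closed under pairing, union and power set. The sequence $\langle V_\gamma:\gamma\le\beta\rangle$ lies in $V_\alpha$ for $\beta<\alpha$, since its rank is $\beta+\omega$ or less and $\alpha$ is a limit.

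\textbf{Converse.} Suppose $(M,E)\models\Phi$. By (a) and (b), the Mostowski collapse gives $(M,E)\cong(N,\in)$ with $N$ transitive. Let $\alpha=\Ord\cap N$; by (e) and the first-order axioms, $\alpha$ is a limit ordinal. The key step is to show by induction on $\beta<\alpha$ that the object $N$ thinks is $V_\beta$ is the true $V_\beta$.
\begin{itemize}
\item Limit stages are absolute, since unions are absolute for transitive $N$.
\item At successor stages, $\mathcal P^N(V_\beta)=\mathcal P(V_\beta)$. One inclusion is transitivity. The other is clause (c): any $Y\subseteq V_\beta$ is a second-order subset of an element of $N$, so it is realised by some $z\in N$, and $z=Y$ after collapse.
\end{itemize}
Then (d) gives $N=\bigcup_{\beta<\alpha}V_\beta^N=\bigcup_{\beta<\alpha}V_\beta=V_\alpha$.

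\textbf{Main obstacle.} The main point to get right is the successor step of the induction. It relies on the second-order quantifier ranging over all subsets of $M$. That is exactly why full second-order semantics is needed; with Henkin semantics the argument would fail.

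A further check is that the first-order definition of ``$V_\beta$'' is absolute enough. It is, since it is $\Delta_0$ in the power-set operation, which we have just shown to be correct in $N$. A minor point is that clause (d) must supply the rank functions inside $N$. I would include it as the first-order axiom ``for every ordinal $\beta$ the function $\gamma\mapsto V_\gamma$ on $\beta+1$ exists''. This holds in $V_\alpha$ for limit $\alpha$, as checked in the easy direction.
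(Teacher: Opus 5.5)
Your argument is correct. The paper gives no proof of this fact; it imports it from Magidor as a black box, so there is no route of the paper's to compare against. What you give is the standard construction of Magidor's $\Phi$. Only two clauses are genuinely second-order: well-foundedness, and full separation for subsets of elements. The latter forces $\mathcal P^N(x)=\mathcal P(x)$ in the transitive collapse. First-order clauses then ensure that the internal $V_\beta$-hierarchy exists, exhausts the model, and has limit length, and your induction on $\beta$ identifies $N$ with $V_\alpha$.

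Three small points need fixing.
\begin{enumerate}
\item In the easy direction, bounding the rank of $\langle V_\gamma:\gamma\le\beta\rangle$ by ``$\beta+\omega$ or less'' does not justify membership in $V_\alpha$. Take $\alpha=\omega$ and $\beta$ finite: then $\beta+\omega=\alpha$, and an object of rank $\alpha$ is not in $V_\alpha$. The correct computation with Kuratowski pairs gives rank exactly $\beta+3$. So the sequence lies in $V_{\beta+4}\subseteq V_\alpha$, since $\alpha$ is a limit.
\item In the converse, state explicitly that $\emptyset\in N$. It is the $\in$-minimal element of the nonempty transitive set $N$. Only then does ``no largest ordinal'' give a limit $\alpha\ge\omega$ rather than $\alpha=0$.
\item The vague clause about ``existence of the ordinal rank'' in (d) is never used and can be dropped.
\end{enumerate}
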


	Now let us introduce the class extension of sort logics we will work with.
	\begin{definition}
		Let $n \geq 1$ and $\kappa$ be a regular cardinal. For $i \in \{0, \dots, 3\}$, let $\kappa_i$ be a regular cardinal $\geq \kappa$ or $\kappa_i = \infty$. The collection of formulas of the logic $\sln_{\kappa \omega}(\wedge^{\kappa_0}, \vee^{\kappa_1}, \exists^{\kappa_2}, \forall^{\kappa_3})$ over a vocabulary $\tau$ consists of the following formulas:
		\begin{itemize}
			\item All formulas of $\sln_{\kappa \omega}$ over $\tau$.
			\item If $S \subseteq \sln_{\kappa \omega}(\wedge^{\kappa_0}, \vee^{\kappa_1}, \exists^{\kappa_2}, \forall^{\kappa_3})$ and $|S| < \kappa_0$, then $\bigwedge S \in \sln_{\kappa \omega}(\wedge^{\kappa_0}, \vee^{\kappa_1}, \exists^{\kappa_2}, \forall^{\kappa_3}).$
			\item If $S \subseteq \sln_{\kappa \omega}(\wedge^{\kappa_0}, \vee^{\kappa_1}, \exists^{\kappa_2}, \forall^{\kappa_3})$ and $|S| < \kappa_1$, then $\bigvee S \in \sln_{\kappa \omega}(\wedge^{\kappa_0}, \vee^{\kappa_1}, \exists^{\kappa_2}, \forall^{\kappa_3})$.
			\item If $\varphi \in \sln_{\kappa \omega}(\wedge^{\kappa_0}, \vee^{\kappa_1}, \exists^{\kappa_2}, \forall^{\kappa_3})$ and $W$ is a set of first- or of second-order variables with $|W| < \kappa_2$, then $\exists W \varphi \in \sln_{\kappa \omega}(\wedge^{\kappa_0}, \vee^{\kappa_1}, \exists^{\kappa_2}, \forall^{\kappa_3})$. 
			\item If $\varphi \in \sln_{\kappa \omega}(\wedge^{\kappa_0}, \vee^{\kappa_1}, \exists^{\kappa_2}, \forall^{\kappa_3})$ and $W$ is a set of first- or of second-order variables with $|W| < \kappa_3$, then $\forall W \varphi \in \sln_{\kappa \omega}(\wedge^{\kappa_0}, \vee^{\kappa_1}, \exists^{\kappa_2}, \forall^{\kappa_3})$. 
		\end{itemize}
	\end{definition}
	Thus, $\sln_{\kappa \omega}(\wedge^{\kappa_0}, \vee^{\kappa_1}, \exists^{\kappa_2}, \forall^{\kappa_3})$ is an expansion of $\sln_{\kappa \omega}$ by conjunctions, disjunctions, and first- and second-order existential or universal quantifiers of size specified by the $\kappa_i$. Note that we restrict usage of the added conjunctions and disjunctions of size $> \kappa$ and of the infinitary quantifiers to take place \emph{outside} the scope of sort quantifiers. I.e., if $\widetilde \exists X \varphi \in \sln_{\kappa \omega}(\wedge^{\kappa_0}, \vee^{\kappa_1}, \exists^{\kappa_2}, \forall^{\kappa_3})$, then we already have $\varphi \in \sln_{\kappa \omega}$. As before, if $\kappa_i = \omega$, we simply drop the corresponding clause, and so write expressions like $\sln_{\kappa \omega}(\wedge^\infty)$ for $\sln_{\kappa \omega}(\wedge^{\kappa_0}, \vee^{\omega}, \exists^{\omega}, \forall^{\omega})$.  
	\subsection{Properties of logics}
	We mention the standard model-theoretic properties of logics considered in the literature and studied below. 
	
	For a logic $\cL$ and a cardinal $\kappa$, a set $T \subseteq \cL$ is called \emph{${<} \kappa$-satisfiable} iff every $S \in \mathcal P_\kappa T$ has a model. The \emph{compactness number} $\comp(\cL)$ of $\cL$ is the smallest cardinal $\kappa$ such that any ${<}\kappa$-satisfiable set $T \subseteq \cL$ has a model.
	
	We also say that $\kappa$ is \emph{a} compactness number of $\cL$ if every ${<}\kappa$-satisfiable set $T \subseteq \cL$ has a model (while not necessarily requiring that $\kappa$ is the smallest such cardinal). For the notions introduced below we follow the analogous convention.
	
	The \emph{weak compactness number} of $\cL$ is the smallest cardinal $\kappa$ such that any ${<}\kappa$-satisfiable set $T \subseteq \cL$ of size $|T| = \kappa$ has a model.
	
	The \emph{Löwenheim-Skolem number} $\text{LS}(\cL)$ of $\cL$ is the smallest cardinal $\kappa$ such that any sentence of $\cL$ has a model of size $< \kappa$.
	
	The \emph{Löwenheim-Skolem-Tarski number} $\LST(\cL)$ is the smallest cardinal $\kappa$ such that if $\tau$ is a vocabulary of size $|\tau| < \kappa$ and $\varphi \in \cL$, then any model $\mathcal A \models_\cL \varphi$ has a substructure $\mathcal B \subseteq \mathcal A$ of size $|B| < \kappa$ with $\mathcal B \models_\cL \varphi$.
	
	The \emph{Hanf number} of $\cL$ is the smallest cardinal $\kappa$ such any satisfiable sentence of $\cL$ with a model $\mathcal A$ of size $|A| \geq \kappa$ has arbitrarily large models.
	
	All the above are classically studied notions. Recently, Galeotti, Khomskii, and Väänänen \cite{galeotti2025bounded} introduced the \emph{upward Löwenheim-Skolem-Tarski number} $\ULS(\cL)$, as the smallest cardinal $\kappa$ such that for all $\varphi \in \cL$ and $\mathcal A \models_\cL \varphi$ of size $|A|\geq \kappa$, there are arbitrarily large superstructures $\mathcal B \supseteq \mathcal A$ with $\mathcal B \models_\cL \varphi$.
	
	This notion got strengthened by Gitman and Osinski \cite{gitman2025upward} to the \emph{strong ULST number} $\SULS(\cL)$, as the smallest cardinal $\kappa$ such that for all sets $T \subseteq \cL$ and $\mathcal A \models_\cL T$ of size $|A| \geq \kappa$, there are arbitrarily large superstructures $\mathcal B \supseteq \mathcal A$ with $\mathcal B \models_\cL T$.
	
	It is folklore that for any set-sized logic $\cL$, it is a theorem of $\ZFC$ that the Löwenheim-Skolem number and the Hanf number of $\cL$ exist (cf., e.g., \cite[Chapter II, Theorem 6.1.4]{bar1985} for a proof for the Hanf number case; the LS number can be treated similarly). The other properties mentioned can have large cardinal strength though. For instance, Magidor showed that $\comp(\cL^2)$ is the smallest extendible cardinal, and that $\LST(\cL^2)$ is the smallest supercompact cardinal (cf. \cite{mag1971}). Results by Fuchino and Sakai \cite{fuchino2022weakly} and by Lücke \cite{lücke2024weak} show that weak compactness cardinals for $\cL^2$ in $L$ give rise to so-called \emph{weakly extendible cardinals}. Gitman and Osinski \cite{gitman2025upward} and Boney and Osinski \cite{boney2025} gave two independent proofs that both $\ULS(\cL^2)$ and $\SULS(\cL^2)$ are the smallest extendible cardinal. The existence of (weak) compactness numbers, LST numbers, and (S)ULST numbers of logics can therefore have large cardinal strength.
	
	It is easy to see that for many of the classically studied class-sized logics, none of these cardinals can consistently exist.
	
	\begin{proposition}\label{prop:HanfLS}
		The logic $\cL_{\infty \infty}$ has neither a Hanf number, nor an LS number.
		\begin{proof}
			Notice that for any cardinal $\kappa$, the sentence
			\[
			\varphi_{\geq \kappa} = \exists (x_i \colon i < \kappa) \bigwedge_{i < j < \kappa} x_i \neq x_j.
			\]
			holds in a structure $\mathcal A$ iff $|A| \geq \kappa$. As $\varphi_{\geq \kappa} \in \cL_{\infty \infty}$ for all cardinals $\kappa$, $\cL_{\infty \infty}$ cannot have an LS number.
			
			\medskip Consider further the sentence
			\[
			\varphi_{< \kappa} = \forall (x_i \colon i < \kappa) \bigvee_{i < j < \kappa} x_i = x_j,
			\]
			which holds in a structure $\mathcal A$ iff $|A| < \kappa$. Because $\varphi_{< \kappa} \in \cL_{\infty \infty}$ for all cardinals $\kappa$, $\cL_{\infty \infty}$ cannot have a Hanf number.
		\end{proof}
	\end{proposition}	
	As the existence of a Hanf number is implied by the existence of the compactness, ULST, and SULST numbers, respectively, and the existence of the LS number is implied by the existence of the LST number, also none of these cardinals can exist for $\cL_{\infty \infty}$. The weak compactness number of $\cL_{\infty \infty}$ cannot exist either, as for any cardinal $\kappa$, the theory $T = \{\varphi_{< \kappa}\} \cup \{c_i \neq c_j \colon i < j < \kappa\}$ for distinct constant symbols $(c_i)_{i < \kappa}$ witnesses that $\kappa$ is not a weak compactness number of $\cL_{\infty \infty}$. 
	
	Note that in the above proof, it is the combination of infinite disjunctions with infinite universal quantification that renders the existence of $\comp(\cL_{\infty \infty})$ impossible. We will see that simply banning this combination allows for the existence of compactness numbers of class-sized logics. For instance, the logic $\cL(\wedge^\infty, \exists^\infty, \forall^\infty)$ has compactness number $\omega$ (cf. Theorem \ref{thm:compclassFOL}). Similarly, banning the combination of infinite conjunctions and existential quantifiers will allow for class logics that \emph{have} downward Löwenheim-Skolem properties.
	
	That, consistently up to large cardinals, class-sized logics can have some interesting model theory was (to our best knowledge) first observed by Wilson. For instance, he obtained the following results, which were presented in several talks, e.g., in \cite{wil2022b}, but remain unpublished.
	\begin{theorem}[Wilson]\label{thm:WilsonClass}
		Let $\kappa$ be a cardinal.
		\begin{enumerate}
			\item[(i)] $\kappa$ is the smallest supercompact cardinal iff $\kappa = \LST(\cL(\vee^\infty, \forall^\infty, \exists^\infty))$.
			\item[(ii)] $\kappa$ is huge with target $\lambda$ iff for every vocabulary $\tau$ of size $< \kappa$ and every $\varphi \in \cL(\vee^\infty, \forall^\infty, \exists^\infty)[\tau]$, if $\mathcal A$ is a $\tau$-structure of size $|A| = \lambda$ and with $\mathcal A \models \varphi$, then there is a substructure $\mathcal B \subseteq \mathcal A$ such that $|B| = \kappa$ and $\mathcal B \models \varphi$. 
		\end{enumerate} 
	\end{theorem}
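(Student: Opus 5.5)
The plan is to isolate one preservation lemma for the fragment $\cL(\vee^\infty, \forall^\infty, \exists^\infty)$ and derive both forward directions from it. The converses will be handled by coding set-theoretic structures in the usual Magidor-style way.

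\textbf{Key lemma.} Let $j\colon V\to M$ be elementary, let $\mathcal A$ be a $\tau$-structure with $j(\tau)=\tau$ (after renaming, $\tau\in V_{\crit(j)}$), and assume $j''A\in M$. Then $j''A$ is a substructure of $j(\mathcal A)$. Moreover, for every $\varphi\in\cL(\vee^\infty,\forall^\infty,\exists^\infty)[\tau]$ with $\mathcal A\models\varphi$, we have $M\models$ ``$j''A\models j(\varphi)$''.

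I will prove the lemma by induction on subformulas $\psi$ of $\varphi$, with the following statement. Let $g$ be any assignment into $j''A$ of the free variables of $j(\psi)$. Define the pulled-back assignment $f$ by $f(x)=j^{-1}(g(j(x)))$. The claim is that $\mathcal A\models\psi[f]$ implies $j''A\models j(\psi)[g]$.
\begin{itemize}
\item For first-order $\psi$ the claim is elementarity, since $\psi$ is finite and $j(\psi)$ uses the variables $j(x)$.
\item For $\psi=\bigvee S$, some $\psi_i\in S$ holds, and $j(\psi_i)\in j(S)$.
\item For $\psi=\exists W\chi$, take a witnessing $W$-variant $f'$ of $f$. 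Define $g'$ on $j(W)$ by $g'(j(x))=j(f'(x))$, and arbitrarily (into $j''A$) on $j(W)\setminus j''W$. Then $f'$ is the pullback of $g'$, and the induction hypothesis applies.
\item For $\psi=\forall W\chi$, let $g'$ be any $j(W)$-variant of $g$ into $j''A$. Its pullback $f'$ is a $W$-variant of $f$, so $\mathcal A\models\chi[f']$, and induction gives $j''A\models j(\chi)[g']$.
\end{itemize}
The absence of infinite conjunctions is exactly what makes this induction work. We never need to check that every member of $j(S)$ holds, only one member in the case of a disjunction.

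\textbf{Forward directions.} For (i), let $\kappa$ be supercompact and $\mathcal A\models\varphi$ with $|\tau|<\kappa$. Take $j$ with critical point $\kappa$, $j(\kappa)>|A|$ and ${}^{|A|}M\subseteq M$. Then $j''A\in M$ has size $|A|<j(\kappa)$, so by the lemma $M\models$ ``$j(\mathcal A)$ has a substructure of size $<j(\kappa)$ satisfying $j(\varphi)$''. Elementarity then yields a substructure of $\mathcal A$ of size $<\kappa$ satisfying $\varphi$. For (ii), let $j$ witness that $\kappa$ is huge with target $\lambda$, so $j(\kappa)=\lambda$ and ${}^{\lambda}M\subseteq M$. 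Take $\mathcal A$ of size $\lambda$. Then $j''A\in M$ has size exactly $\lambda=j(\kappa)$, and the same reflection gives a substructure of $\mathcal A$ of size exactly $\kappa$ satisfying $\varphi$.

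\textbf{Converses.} The fragment expresses well-foundedness via $\forall(x_n\colon n<\omega)\bigvee_n \neg(x_{n+1}\mathrel{E}x_n)$, and all first-order sentences are available. For (i), assume $\LST=\kappa$. For large $\alpha$, apply the LST property to $(V_\alpha,\in,\kappa)$, expanded by Skolem functions in a countable vocabulary, with the sentence ``well-founded, extensional, and all first-order truths''. The resulting small substructure collapses to some $V_{\bar\alpha}$ and gives an elementary $V_{\bar\alpha}\to V_\alpha$. I intend to use Magidor's characterisation of supercompactness in terms of such embeddings. Arranging that the critical point maps to $\kappa$ needs constants for the ordinals below $\kappa$, which is permitted because $|\tau|<\kappa$. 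Minimality of $\kappa$ then follows by comparing with the forward direction. For (ii), I would apply the hypothesis to $(V_{\lambda+\omega},\in,\lambda,\dots)$ with the same well-foundedness sentence plus a predicate for $\lambda$. The target is a set $X\subseteq\lambda$ of order type $\kappa$ with transitive collapse data yielding a $\kappa$-complete normal ultrafilter on $\{x\in[\lambda]^\kappa : \text{ot}(x)=\kappa\}$, which is the combinatorial characterisation of hugeness with target $\lambda$.

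I expect this last step to be the main obstacle. The inverse collapse must have critical point exactly $\kappa$ and send $\kappa$ exactly to $\lambda$. To achieve this I would need to add suitable constants or predicates, keeping the vocabulary of size $<\kappa$, so that the substructure contains $\kappa$'s initial segment and is cofinal in $\lambda$ in the right way.
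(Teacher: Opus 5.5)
The paper states this result (attributed to Wilson, unpublished) without proof, so this review judges your argument on its own merits.

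\textbf{Forward directions: sound, with two repairs.} The preservation lemma for $j``A$ is the natural tool, and with it (i) and (ii) go through as you describe. The lemma needs two repairs.
\begin{enumerate}
\item The first-order case is not ``elementarity'' of $j$. Elementarity gives $j(\mathcal A)\models\dots$, not $j``A\models\dots$. What you need is that $j\upharpoonright A$ is an isomorphism of $\mathcal A$ onto the substructure of $j(\mathcal A)$ with universe $j``A$, plus absoluteness of first-order satisfaction to $M$.
\item Every assignment in the induction must lie in $M$. In the $\exists W$ step your $g'$ (given by $j\circ f'$ on $j``W$ and ``arbitrarily'' elsewhere) need not lie in $M$. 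You only know $j``A\in M$, not $j\upharpoonright W\in M$, and in (ii) $|W|$ may exceed $\lambda$. Instead, take $j(f'\upharpoonright W)\in M$ and redirect each value outside $j``A$ to a fixed element of $j``A$. This function lies in $M$ and agrees with $j\circ f'\circ j^{-1}$ on $j``W$.
\end{enumerate}

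\textbf{Converse of (i): the reflected sentence forces nothing.} With Skolem functions in $\tau$, every substructure is elementary. Every substructure of $(V_\alpha,\in)$ is well-founded anyway. Also, ``all first-order truths'' is not a single sentence of a logic with only finite conjunctions. So the countable Skolem hull of $\{\kappa\}$ already satisfies your sentence, and its collapse is a countable transitive set, not a $V_{\bar\alpha}$.

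Fullness of the collapse is the entire content of Magidor's argument. For $\cL^2$ it comes from Magidor's $\Phi$. Here there are no second-order quantifiers, and the obvious rendering ``every $Z\subseteq b$ equals $c\cap b$ for some $c$'' needs $\bigwedge_{z\in Z} z\in c$. That is an infinite conjunction, which the fragment lacks.

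Your way of pinning $e(\crit(e))$ is also wrong. Constants for all ordinals below $\kappa$ give $|\tau|=\kappa$, and they would force every substructure to have size $\geq\kappa$.

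\textbf{Converse of (ii): the hypothesis does not apply.} The hypothesis concerns only structures of size exactly $\lambda$, so $V_{\lambda+\omega}$ is not allowed. Moreover, a structure of size $\lambda$ cannot contain the $2^\lambda$ many subsets of $[\lambda]^\kappa$ that the normal measure must decide.

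\textbf{What is missing.} You need a way to exploit a fragment that can only assert \emph{non-existence}, i.e.\ negations of $\cL(\wedge^\infty,\exists^\infty,\forall^\infty)$-sentences, with the large object placed in the \emph{syntax} rather than in the structure. Compare the proof of Theorem~\ref{thm:LS-Pin-strong}: there, ``there is no homomorphism of $\mathscr P$-structures'' is reflected to a small model and refuted by a trivial homomorphism. As it stands, neither converse is proved.
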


	\subsection{Large cardinal notions}
	In this section we will give an overview of the large cardinal notions we consider and some of the background results we will refer to. 
	
	A cardinal $\kappa$ is \emph{extendible} if for every $\alpha > \kappa$ there is a some $\beta$ with an elementary embedding $j: V_\alpha \to V_\beta$ such that $\crit(j) = \kappa$ and $j(\kappa) > \alpha$. Equivalently, we might drop the condition that $j(\kappa) > \alpha$ (cf. \cite[Proposition 23.15]{kan}).
	
	The cardinal $\kappa$ is \emph{$C^{(n)}$-extendible} if for every $\alpha > \kappa$ with $\alpha \in C^{(n)}$ there is some $\beta \in C^{(n)}$ with an elementary embedding $j: V_\alpha \to V_\beta$ such that  $\crit(j) = \kappa$ and $j(\kappa) > \alpha$. Again, we may equivalently drop $j(\kappa) > \alpha$.\footnote{The original definition by Bagaria (cf. \cite{bag2012}) demands that for every $\alpha > \kappa$ there is some $j: V_\alpha \to V_\beta$ with $\crit(j) = \kappa$, $j(\kappa) > \alpha$, and $j(\kappa) \in C^{(n)}$. That the presented definition is equivalent to this was shown, e.g., in \cite[Theorem 15]{gitman2019CnExt} and in \cite[Corollary 3.5]{tsaprounis2018c}. The argument from \cite[Proposition 23.15]{kan} that $j(\kappa) > \alpha$ can be dropped then carries over.} With rising $n$, the existence of a $C^{(n)}$-extendible cardinal gains consistency strength (cf. \cite{bag2012}). 
	
	\emph{Vopěnka's Principle} ($\VP$) is the axiom schema stating for every class $\mathcal C$ of structures in a shared vocabulary that there are distinct $\mathcal A, \mathcal B \in \mathcal C$ with an elementary embedding between them. $\VP$ is connected to $C^{(n)}$-extendible cardinals, as well as to properties of logics.
	\begin{fact}\label{fact:VP}
		$\VP$ is equivalent to the following axiom schemas.
		\begin{enumerate}
			\item[(i)] (Bagaria \cite{bag2012}) For every $n$, there is a $C^{(n)}$-extendible cardinal.
			\item[(ii)] (Makowsky \cite{mak1985}) Every set-sized logic has a compactness number.
			\item[(iii)] (Stavi \cite{mag2011}) Every set-sized logic has an LST number.
		\end{enumerate}
	\end{fact}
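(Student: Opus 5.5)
The plan is to prove the cycle through (i): first (i)$\Rightarrow$(ii) and (i)$\Rightarrow$(iii), then (ii)$\Rightarrow\VP$ and (iii)$\Rightarrow\VP$, and finally take $\VP\Leftrightarrow$(i) from Bagaria \cite{bag2012}. All of these are schemas, so each implication is understood instance by instance. For a set-sized logic $\cL$ we fix $n$ such that the maps $\tau\mapsto\cL[\tau]$ and $\models_\cL$ are $\Sigma_n$-definable from a parameter $p$. For $\alpha\in C^{(n)}$ with $p\in V_\alpha$ and structures, vocabularies and sentences in $V_\alpha$, the satisfaction relation is then computed correctly in $V_\alpha$. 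This is the analogue of Fact \ref{factSL}(2) for $\cL$.

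\emph{(i)$\Rightarrow$(ii).} Let $\kappa$ be $C^{(n)}$-extendible with $p\in V_\kappa$ and $\text{o}(\cL)\le\kappa$; we show that $\kappa$ is a compactness number. Take a ${<}\kappa$-satisfiable $T\subseteq\cL[\tau]$ and pick $\alpha\in C^{(n)}$ with $T,\tau\in V_\alpha$. Extendibility gives $\beta\in C^{(n)}$ and $j:V_\alpha\to V_\beta$ with $\crit(j)=\kappa$ and $j(\kappa)>\alpha$. By elementarity, $V_\beta$ thinks that $j(T)$ is ${<}j(\kappa)$-satisfiable. The set $j''T$ lies in $V_\beta$, is contained in $j(T)$, and has size $|T|<j(\kappa)$, so it has a model in $V_\beta$, and this is a true model because $\beta\in C^{(n)}$. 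It remains to pull this back to a model of $T$. Each $\varphi\in T$ lives in some $\cL[\tau_0]$ with $|\tau_0|<\kappa$. I would code $\tau_0$ by ordinals ${<}\kappa$, using a fixed renaming, so that $j$ fixes the coded vocabulary. Then $j(\varphi)$ is the image of $\varphi$ under the renaming $j\upharpoonright\tau$, and Definition \ref{def:abstr:log}(iv)--(v) lets us transport the model of $j''T$ back to a model of $T$. I expect this step to be the main technical obstacle: making sure $j(\varphi)$ is equivalent to the renamed $\varphi$, uniformly in $\varphi$, in the abstract setting where sentences are arbitrary sets.

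\emph{(i)$\Rightarrow$(iii).} Let $\kappa$ be as before, and suppose $\mathcal A\models_\cL\varphi$ with $|\tau|<\kappa$. Choose $\alpha\in C^{(n)}$ containing everything relevant, and take $j:V_\alpha\to V_\beta$ as above. After the coding of the previous paragraph we may assume $j(\tau)=\tau$ and $j(\varphi)=\varphi$. Then $j''A$ carries a substructure $\mathcal B\subseteq j(\mathcal A)$ with $\mathcal B\cong\mathcal A$. Hence $\mathcal B\models\varphi$, and $|B|=|A|<j(\kappa)$ since $j(\kappa)>\alpha$. Because $\beta\in C^{(n)}$, $V_\beta$ satisfies ``$j(\mathcal A)$ has a substructure of size ${<}j(\kappa)$ satisfying $j(\varphi)$''. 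By elementarity, $V_\alpha$ satisfies the same statement about $\mathcal A$, $\varphi$ and $\kappa$, and since $\alpha\in C^{(n)}$ this statement is true in $V$.

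\emph{(ii)/(iii)$\Rightarrow\VP$.} Suppose that $\mathcal C$ is a $\Sigma_m$-definable class of $\tau$-structures with no elementary embedding between distinct members. Let $\cL$ be the extension of first-order logic by the sentence $\Phi^{(m+1)}$ of Fact \ref{factSL}(1); this is $\mathcal L^{\text{s},m+1}$, which is set-sized. For LST we apply (iii) to $(V_\alpha,\in,\mathcal A)$, where $\alpha\in C^{(m+1)}$ and $\mathcal A\in\mathcal C$ has rank above the LST number. We get a small substructure satisfying $\Phi^{(m+1)}$, which is isomorphic to some $V_\gamma$ with $\gamma\in C^{(m+1)}$. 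The inverse of the collapse gives a non-identity elementary embedding $V_\gamma\to V_\alpha$. Its restriction to the image of $\mathcal A$ is an elementary embedding from a member of $\mathcal C$ into $\mathcal A$, and correctness of $C^{(m+1)}$ ensures the source is still in $\mathcal C$; this is a contradiction. For compactness we use the usual Makowsky theory: the elementary diagram of $(V_\alpha,\in)$ together with $\Phi^{(m+1)}$ and a new constant above all old ordinals. A model of this theory yields, after collapsing, an elementary embedding $V_\alpha\to V_\beta$ with a critical point, and as above this gives an embedding between members of $\mathcal C$.
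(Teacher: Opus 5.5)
The paper only cites this fact, so your sketch has to stand on its own. The overall route through Bagaria's theorem is reasonable, but three steps fail as written.

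\textbf{First: the pullback in (i)$\Rightarrow$(ii) and (i)$\Rightarrow$(iii).} The step you flag is not settled by making $j$ fix the coded vocabulary. From $j(\sigma)=\sigma$ you only get $j(\varphi')\in\cL[\sigma]$. Elementarity then tells you merely that $j(\varphi')$ agrees with $\varphi'$ on $\sigma$-structures in $V_\kappa$, which says nothing about the large model $\mathcal B\in V_\beta$. Also, (v) of Definition \ref{def:abstr:log} only provides \emph{some} renamed sentence, so ``the image of $\varphi$ under $j\upharpoonright\tau$'' is not yet meaningful.

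The missing idea is that the renamed sentence itself lies in $V_\kappa$. A $C^{(n)}$-extendible $\kappa$ belongs to $C^{(n)}$, and $p\in V_\kappa$. So for $\sigma\in V_\kappa$ the set $\cL[\sigma]$, being $\Sigma_n$-definable from $\sigma,p$, is in $V_\kappa$, whence $j(\varphi')=\varphi'$. Now apply $j$ to the statement ``$\rho_\varphi\colon\tau_\varphi\to\sigma_\varphi$ is a renaming and for every $\tau_\varphi$-structure $\mathcal C$, $\mathcal C\models\varphi$ iff $\rho_\varphi(\mathcal C)\models\varphi'$'', which is true in $V_\alpha$ by correctness. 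Since $j(\rho_\varphi)=\rho_\varphi\circ(j\upharpoonright\tau_\varphi)^{-1}$ (using $|\tau_\varphi|<\kappa$), correctness of $V_\beta$ together with (iv) gives $(j\upharpoonright\tau)^{-1}(\mathcal B\upharpoonright j``\tau)\models\varphi$.

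You also need a $C^{(n)}$-extendible cardinal above $p$ and $\text{o}(\cL)$, i.e.\ a proper class of them. This follows from (i) by a short elementarity argument, or from Bagaria's theorem. With these repairs your (i)$\Rightarrow$(iii) argument also goes through.

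\textbf{Second: Skolem functions in (iii)$\Rightarrow\VP$.} The LST property only yields a \emph{substructure} of $(V_\alpha,\in,\mathcal A)$. So the inverse collapse $V_\gamma\to V_\alpha$ is merely an $\in$-embedding, and neither $\pi(\mathcal A)\in\mathcal C$ nor elementarity of its restriction follows. You must add Skolem functions for the first-order formulas to the (still countable) vocabulary, so that every substructure is elementary.

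\textbf{Third: the compactness theory in (ii)$\Rightarrow\VP$.} A constant above all old ordinals does not force a critical point. If some $\beta\in C^{(m+1)}$ has $V_\alpha\prec V_\beta$ (nothing in your setup excludes this), then $V_\beta$ with $d_x=x$ and $c=\alpha$ models your theory, and the induced embedding is the inclusion. Even with a critical point, ``as above'' does not give $j(\mathcal A)\neq\mathcal A$. In the LST case distinctness came from the source being small, whereas here $j$ may fix $\text{rank}(\mathcal A)$.

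Instead, bound the new constants, in the spirit of the theory $T$ in the proof of Theorem \ref{thm:classSOL}. Take a compactness number $\kappa$ and $\mathcal A\in\mathcal C$ with $\rho=\text{rank}(\mathcal A)\geq\kappa$, and add ordinal constants $c_\xi<c_\eta<d_\kappa$ for $\xi<\eta\leq\rho$. This theory is ${<}\kappa$-satisfiable in $V_\alpha$ and forces $j(\kappa)>\rho$, so $\text{rank}(j(\mathcal A))=j(\rho)>\rho$.

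Finally, for classes defined from a parameter $q$ you must also ensure that the embedding fixes $q$, which neither of your arguments addresses.
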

	Inspired by equivalences of $\VP$ to statements from category theory, Adámek, Rosický, and Trnková \cite{adamek1988} introduced \emph{Weak Vopěnka's Principle} ($\WVP$). While they use the class theory GBC for its formulation, we adopt the $\ZFC$ perspective from \cite{bagaria2023weak}. $\WVP$ is then the axiom schema positing that if $(G_\alpha \colon \alpha \in \text{Ord})$ is a sequence of graphs, which is definable possibly using set parameters, such that for every $\alpha \leq \beta$ there is exactly one homomorphism $G_\beta \rightarrow G_\alpha$, then there are ordinals $\alpha < \beta$ and a homomorphism $G_\alpha \rightarrow G_\beta$.\footnote{In category-theoretic terms: ``The category $\Ord^{\text{op}}$ with the reverse inclusion on ordinals as morphisms does not fully embed into the category $\text{Gra}$ of Graphs".}
	
	Bagaria and Wilson \cite{bagaria2023weak} provided an analysis of $\WVP$ in terms of large cardinals. For this, a cardinal $\kappa$ is called \emph{$\Pi_n$-strong} if for every ordinal $\lambda$ and every $A$, which is $\Pi_n$ definable without parameters, there is a transitive class $M$ and an elementary embedding $j: V \to M$ such that $\crit(j) = \kappa$, $j(\kappa) > \lambda$, $V_\lambda \subseteq M$, and $A \cap V_\lambda \subseteq A^M$. With rising $n$, the existence of a $\Pi_n$-strong cardinal gains consistency strength (cf. \cite[p. 164]{bagaria2023weak}).
	
	When writing $\WVP(\Pi_n)$ for the axiom that restricts the statement $\WVP$ to sequences of graphs definable by a $\Pi_n$ formula, Bagaria and Wilson showed:
	\begin{fact}[{Bagaria \& Wilson \cite[Theorem 5.11]{bagaria2023weak}}]
		The following are equivalent for $n \geq 1$:
		\begin{enumerate}\label{fact:PinWVP}
			\item[(1)] $\WVP(\Pi_n)$.
			\item[(2)] There is a $\Pi_n$-strong cardinal. 
		\end{enumerate}
	\end{fact}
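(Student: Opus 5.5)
We prove the two implications separately. The direction from (2) to (1) is a short embedding argument. The direction from (1) to (2) needs a coding construction, and that is where the main work lies.

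\emph{(2) $\Rightarrow$ (1).} Let $\kappa$ be $\Pi_n$-strong, and let $(G_\alpha \colon \alpha \in \Ord)$ be a $\Pi_n$-definable sequence with unique homomorphisms $G_\beta \to G_\alpha$ for $\alpha \le \beta$. If set parameters are involved, we first absorb them, for instance by using a $\Pi_n$-strong cardinal above them, or by noting that the definition of $\Pi_n$-strongness relativizes to small parameters.
\begin{enumerate}
\item Let $A$ be the $\Pi_n$ class $\{(\alpha, G_\alpha)\}$. Fix $\beta > \kappa$ and choose $\lambda > \beta$ with $G_\kappa, G_\beta \in V_\lambda$.
\item Take $j \colon V \to M$ with $\crit(j) = \kappa$, $j(\kappa) > \lambda$, $V_\lambda \subseteq M$ and $A \cap V_\lambda \subseteq A^M$. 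Since $A^M$ is functional in $M$ by elementarity, we get $G^M_\beta = G_\beta$ and $G^M_\kappa = G_\kappa$.
\item By elementarity, $G^M_{j(\kappa)} = j(G_\kappa)$. Since $\beta < j(\kappa)$, $M$ satisfies that there is a homomorphism $h \colon j(G_\kappa) \to G^M_\beta = G_\beta$.
\item The restriction $j \upharpoonright G_\kappa \colon G_\kappa \to j(G_\kappa)$ is a homomorphism and lies in $V$.
\end{enumerate}
Hence $h \circ (j \upharpoonright G_\kappa) \colon G_\kappa \to G_\beta$ is a homomorphism in $V$ with $\kappa < \beta$, which is exactly what $\WVP(\Pi_n)$ requires.

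\emph{(1) $\Rightarrow$ (2).} We argue by contraposition. Assume there is no $\Pi_n$-strong cardinal. Then for each cardinal $\kappa$ there are a $\Pi_n$ formula and an ordinal $\lambda_\kappa$ witnessing the failure. Since there are only countably many formulas, we can use a universal $\Pi_n$ class (or a pairing of the formula index into a single class $A$) and fix one $\Pi_n$ class $A$ that witnesses failure for every $\kappa$, with least failure ordinal $\lambda_\kappa$.

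We then take a club $C$ of ordinals closed under $\kappa \mapsto \lambda_\kappa$, chosen so that its definition stays $\Pi_n$. For $\alpha \in C$ we build a graph $G_\alpha$ coding the structure $(V_\alpha, \in, A \cap V_\alpha)$. The coding uses a rigid-graph gadget, following the standard proof that $\VP$ is equivalent to the nonexistence of rigid classes of graphs, modified so that the graphs form an inverse system: the canonical homomorphism $G_\beta \to G_\alpha$ for $\alpha \le \beta$ is a \enquote{restriction} map. Uniqueness of these homomorphisms has to come from rigidity of the gadgets.

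Now suppose there were a homomorphism $G_\alpha \to G_\beta$ with $\alpha < \beta$. The gadget design should force it to induce an elementary embedding $k \colon V_{\alpha} \to V_{\beta'}$ that respects $A$, meaning $A \cap V_\alpha$ maps into $A$. We also need $k$ to be non-identity, with some critical point $\kappa$ and $k(\kappa)$ above the relevant $\lambda_\kappa$. From $k$ we derive a $(\kappa, \lambda_\kappa)$-extender $E$. Its ultrapower $j_E \colon V \to M_E$ then satisfies $V_{\lambda_\kappa} \subseteq M_E$, and by the $A$-preservation together with the $\Pi_n$-correctness of the levels of $C$, also $A \cap V_{\lambda_\kappa} \subseteq A^{M_E}$. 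This contradicts the choice of $\lambda_\kappa$, so the sequence violates $\WVP(\Pi_n)$.

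\emph{Main obstacle.} The hard part is the coding step. We need graphs that are definable with the right complexity ($\Pi_n$), admit exactly one homomorphism downward, and are such that any upward homomorphism decodes into an $A$-preserving elementary embedding with a genuine critical point. I would first try to encode each $V_\alpha$ via rigid gadgets attached to ordinals together with a predicate layer for $A$. I would then check that homomorphisms must send ordinal gadgets to ordinal gadgets in an order-preserving way; this yields the critical point, and the ordinal gadgets pin down $k(\kappa)$. Keeping the whole definition within $\Pi_n$, rather than $\Pi_{n+1}$, will require choosing $C$ as a subclass of $C^{(n)}$ and checking this carefully.
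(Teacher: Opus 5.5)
The paper does not prove this Fact; it quotes it from Bagaria and Wilson, so your proposal has to stand on its own.

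Your direction (2)$\Rightarrow$(1) is correct and is the standard argument. You get $G^M_\beta=G_\beta$ for some $\beta$ with $\kappa<\beta<j(\kappa)$, and then $h\circ(j\upharpoonright G_\kappa)\colon G_\kappa\to G_\beta$ is the required homomorphism (uniqueness is not even used). The only caveat concerns parameters. A single $\Pi_n$-strong cardinal $\kappa$ lets you absorb only parameters $p\in V_\kappa$, where $j(p)=p$. So you are proving the parameter-free version of $\WVP(\Pi_n)$, which is the reading needed for an equivalence with one $\Pi_n$-strong cardinal.

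The direction (1)$\Rightarrow$(2), however, has a genuine gap. The coding you postpone is the entire content of this direction, and the design you sketch cannot work, for two reasons.

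\emph{Direction of the maps.} The homomorphisms $G_\beta\to G_\alpha$ required for $\alpha<\beta$ go from the code of the larger $V_\beta$ to the code of the smaller $V_\alpha$, and there is no ``restriction map'' $V_\beta\to V_\alpha$. An $\in$-preserving map never lowers rank. Moreover, if homomorphisms send ordinal gadgets strictly order-preservingly to ordinal gadgets (as you need in order to read off a critical point), then a homomorphism $G_\beta\to G_\alpha$ would embed the ordinal $\beta$ order-preservingly into $\alpha<\beta$. So the gadget analysis meant to decode upward homomorphisms excludes exactly the downward homomorphisms the sequence must have, and rigidity cannot repair this.

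\emph{Wrong kind of embedding.} An elementary $k\colon V_\alpha\to V_{\beta'}$ with critical point $\kappa$, where $\alpha$ is a limit ordinal above $\kappa$, restricts to $V_{\kappa+1}\to V_{k(\kappa)+1}$. The extender of length $k(\kappa)$ derived from this makes $\kappa$ superstrong. Since $\Pi_1$-strong just means strong, your own first half shows that a strong cardinal gives $\WVP(\Pi_1)$. So a $\Pi_1$ coding with your decoding property would produce a superstrong cardinal from a strong one, which is impossible by consistency strength. Embeddings between rank initial segments are what $\VP$ provides; $\WVP$ only provides strongness embeddings $j\colon V\to M$ with $V_\beta\subseteq M$.

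The missing idea is a contravariant coding, which is what Wilson's pointed $\mathscr P$-structures (Definition \ref{def:p-str}) supply.

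Under your hypothesis there is no $C^{(n)}$-strong cardinal either, since a $C^{(n)}$-strong cardinal is $\Pi_n$-strong. So:
\begin{itemize}
\item let $\lambda_\kappa$ be least such that $\kappa$ is not $\lambda_\kappa$-$C^{(n)}$-strong;
\item let $D$ be the club of $\mu\in C^{(n)}$ closed under $\kappa\mapsto\lambda_\kappa$;
\item use the structures $\mathscr P_{V_\mu,C^{(n)}\cap V_\mu}$ for $\mu\in D$.
\end{itemize}

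Restriction $B\mapsto B\cap V_\alpha^{<\omega}$ gives the downward homomorphisms (Fact \ref{fact:trivialHom}).

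Now suppose there is an upward homomorphism for $\alpha<\beta$ in $D$. By Fact \ref{fact:p-str} it yields $j\colon V\to M$ with $V_\beta\subseteq j(V_\alpha)$, hence $\crit(j)\le\alpha$, and $j(C^{(n)}\cap V_\alpha)\cap V_\beta=C^{(n)}\cap V_\beta$. So $\crit(j)$ is $\beta$-$C^{(n)}$-strong although $\lambda_{\crit(j)}<\beta$, a contradiction. This is exactly the mechanism of Claim \ref{ThisClaim}.

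Likewise, a downward homomorphism other than restriction comes from some $j$ with $\crit(j)<\alpha$, $V_\alpha\subseteq M$ and $j(C^{(n)}\cap V_\beta)\cap V_\alpha=C^{(n)}\cap V_\alpha$. This contradicts $\lambda_{\crit(j)}<\alpha$, so uniqueness is automatic and needs no rigid gadgets.

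The complexity issue you flag is handled by reflection. $\lambda$-$C^{(n)}$-strength is witnessed by a set of rank not much above $\lambda$, so for $\mu\in C^{(n)}$ the model $V_\mu$ computes $\kappa\mapsto\lambda_\kappa$ correctly below $\mu$. This keeps the definition of $D$, and of the sequence, at the level of $C^{(n)}$.

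Finally, a definable full embedding of structures into graphs transfers both the uniqueness and the non-existence of homomorphisms.
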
 
	
	$\Pi_n$-strong cardinals are a variation of the following classical notion. For a class $A$ (definable \emph{with} possible set parameters), a cardinal $\kappa$ is \emph{$A$-strong} if for every ordinal $\lambda$, there is an elementary embedding $j: V \to M$ such that $\crit(j) = \kappa$, $j(\kappa) > \lambda$, $V_\lambda \subseteq M$, and $j(A \cap V_\kappa) \cap V_\lambda = A \cap V_\lambda$. The axiom schema stating that for every class $A$ there is an $A$-strong cardinal is known as \emph{Ord is Woodin}.
	
	Bagaria and Wilson showed the following.
	\begin{fact}[Bagaria \& Wilson \cite{bagaria2023weak}]\label{fact:BagariaWilson}
		$\WVP$ is equivalent to the following axiom schemas.
		\begin{enumerate}
			\item[(1)] For every $n$, there is a $\Pi_n$-strong cardinal.
			\item[(2)] For every $n$, there is a proper class of $\Pi_n$-strong cardinals.
			\item[(3)] Ord is Woodin.
		\end{enumerate}
	\end{fact}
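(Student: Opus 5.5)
The plan is to prove the cycle $\WVP \Rightarrow (1) \Rightarrow (2) \Rightarrow (3) \Rightarrow \WVP$, using Fact \ref{fact:PinWVP} for each level $n$ separately. The implication $\WVP \Rightarrow (1)$ is immediate, since $\WVP$ implies $\WVP(\Pi_n)$ for every $n$, and then Fact \ref{fact:PinWVP} gives a $\Pi_n$-strong cardinal. For the converse direction $(3) \Rightarrow \WVP$ (or $(1) \Rightarrow \WVP$), a sequence of graphs definable with a set parameter $p$ is $\Pi_n$-definable in $p$ for some $n$. I would remove the parameter by passing through (2) and (3): take an $A$-strong cardinal $\kappa$ above the rank of $p$, where $A$ codes the parameter-free relation $\{(q,x)\colon x \in G^q\}$. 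Then I would run the argument of Fact \ref{fact:PinWVP} with this $\kappa$. That argument only uses that the embedding $j$ fixes $p$ and correctly computes the relevant class up to $V_\lambda$.

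For $(1) \Rightarrow (2)$, fix $n$ and suppose the $\Pi_n$-strong cardinals are bounded by some $\gamma$. Being $\Pi_n$-strong is $\Pi_{m}$-expressible for some $m$ depending on $n$, and so is "there is no $\Pi_n$-strong cardinal above $\gamma$". Take $\kappa$ to be $\Pi_{m'}$-strong for $m' \gg m$. Because the relevant truth sets are captured by $M$ below $\lambda$, the class of $\Pi_n$-strong cardinals is computed correctly in $M$ up to $\lambda$. Hence $\kappa$ is $\Pi_n$-strong, indeed its $\Pi_n$-strongness is witnessed inside $M$ up to $j(\kappa)$. Reflecting this through $j$ gives unboundedly many $\Pi_n$-strong cardinals below $\kappa$ above any $\alpha<\kappa$. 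Taking $\lambda$ large, $j(\kappa)$ exceeds $\gamma$, and elementarity transfers unboundedness below $j(\kappa)$ back to a contradiction with the $\Pi_m$ statement that the bound is $\gamma$.

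For $(2) \Rightarrow (3)$, let $A = \{x \colon \varphi(x,p)\}$ with $\varphi \in \Pi_n$. Consider the parameter-free class
\[
B = \{(0,q,x) \colon \varphi(x,q)\} \cup \{(1,q,x) \colon \neg\varphi(x,q)\},
\]
which is $\Pi_{n+1}$. Pick a $\Pi_{n+1}$-strong $\kappa > \text{rank}(p)$ using (2), and for any $\lambda$ take $j\colon V \to M$ with $V_\lambda \subseteq M$ and $B \cap V_\lambda \subseteq B^M$. Since both $A$ and its complement are coded into $B$, this inclusion yields $A^M_{p} \cap V_\lambda = A \cap V_\lambda$ (after shrinking $\lambda$ slightly to absorb the pairing). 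Moreover $j(p)=p$, so $j(A\cap V_\kappa) = A^M_{j(p)} \cap V_{j(\kappa)}$ by elementarity. Together these give $j(A\cap V_\kappa)\cap V_\lambda = A \cap V_\lambda$, so $\kappa$ is $A$-strong. Finally, $(3) \Rightarrow (1)$ applies $A$-strongness to $A$ the parameter-free $\Pi_n$ truth predicate together with $C^{(n)}$. One uses that $A\cap V_\kappa$ is $\Pi_n$-truth relativized to $V_\kappa$ (choosing the $A$-strong $\kappa$ in $C^{(n)}$, which $A$-strongness for this $A$ forces). Elementarity then shows $j(A\cap V_\kappa) \subseteq A^M$, whence $A\cap V_\lambda \subseteq A^M$ for every $\Pi_n$ class.

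The main obstacle I expect is $(1)\Rightarrow(2)$, specifically the complexity bookkeeping. One must check that a single $\Pi_{m'}$-strong embedding sees enough truth for the definition of "$\Pi_n$-strong" to be absolute between $V$ and $M$ up to $\lambda$; the quantification over all $\lambda$ and all $\Pi_n$ classes makes this delicate. I would first try to replace "$\Pi_n$-strong" by the equivalent local version, namely strongness witnessed by extenders in $V_\theta$ for $\theta\in C^{(n+1)}$. That makes the property $\Sigma_{n+2}$-ish, so $C^{(n+2)}$-correctness of $V_\lambda$ suffices.
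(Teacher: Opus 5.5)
The paper gives no proof of this Fact; it is quoted from Bagaria and Wilson. Judged on its own, your cycle has one real hole: the implication back to $\WVP$ is never argued. You propose to ``run the argument of Fact \ref{fact:PinWVP}'' and assert what that argument uses, but the paper does not contain that argument. Used as a black box, Fact \ref{fact:PinWVP} pairs $\WVP(\Pi_n)$ with a \emph{single} $\Pi_n$-strong cardinal, so it is the parameter-free version. It cannot be quoted for a sequence defined from an arbitrary set parameter $p$, which is what $\WVP$ requires. What is missing is the step where an embedding produces a graph homomorphism.

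Your guess about the ingredients is right, and the argument is short. Let $G_\alpha$ be defined by $\theta(\alpha,G,p)$ with $\theta\in\Pi_k$, and let $A=\{(q,\alpha,G)\colon \theta(\alpha,G,q)\}$, a parameter-free $\Pi_k$ class. By (2), take a $\Pi_k$-strong $\kappa$ with $p\in V_\kappa$. Take a limit $\lambda>\kappa$ with $G_{\kappa+1}\in V_\lambda$, and $j\colon V\to M$ with $\crit(j)=\kappa$, $j(\kappa)>\lambda$, $V_\lambda\subseteq M$ and $A\cap V_\lambda\subseteq A^M$.

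Since $j(p)=p$, elementarity gives three things. The sequence $M$ defines from $p$ is functional and satisfies the hypothesis of $\WVP$ in $M$. Its $j(\kappa)$-th graph is $j(G_\kappa)$. And since $(p,\kappa+1,G_{\kappa+1})\in A^M$, its $(\kappa+1)$-st graph is $G_{\kappa+1}$. As $\kappa+1<j(\kappa)$, $M$ contains a homomorphism $h\colon j(G_\kappa)\to G_{\kappa+1}$. Then $h\circ(j\upharpoonright G_\kappa)\colon G_\kappa\to G_{\kappa+1}$ is a homomorphism in $V$, as required.

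The other steps are sound but should be tightened. In $(1)\Rightarrow(2)$, agreement between $V$ and $M$ on $\Pi_n$-strong cardinals below $\lambda$ does not give the contradiction, because the relevant cardinal of $M$, namely $j(\kappa)$, lies above $\lambda$. What works is the point in your last sentence. Apply $\Pi_{m'}$-strength ($m'\geq m$) to the parameter-free $\Pi_m$-satisfaction class: every $\Pi_m$ fact with parameters in $V_\lambda$ then holds in $M$. So $M\models$ ``no $\Pi_n$-strong cardinal is above $\gamma$''. But $M\models$ ``$j(\kappa)$ is $\Pi_n$-strong'' by elementarity, and $j(\kappa)>\lambda>\gamma$.

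The detour through $\Pi_n$-strong cardinals below $\kappa$ is unnecessary, and so is the bookkeeping you worry about. You only need that $\Pi_n$-strength is first-order expressible (via extenders) at some level $\Pi_m$, since (1) gives $\Pi_{m'}$-strong cardinals for every $m'$; the exact level (your ``$\Sigma_{n+2}$-ish'') is irrelevant.

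Likewise, in $(3)\Rightarrow(1)$ you do not need $\kappa\in C^{(n)}$: for parameter-free $A$, elementarity alone gives $j(A\cap V_\kappa)=A^M\cap V^M_{j(\kappa)}$. Your $(2)\Rightarrow(3)$ argument is correct as written.
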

	That $\WVP$ is equivalent to ``Ord is Woodin" was first shown by Wilson in a GBC setting in  \cite{wilson2022large}.
	
	\section{Compactness of class logics}\label{sec:ClassComp}
	\subsection{Class versions of first-order logic}
	In this section, we will show, in $\ZFC$, that the compactness number of $\cL(\wedge^\infty, \exists^\infty,\forall^\infty)$ is $\omega$. Note that this means that switching from first-order logic to this class logic does have no effect on the compactness number. The proof of this theorem is a refinement of the ultraproduct proof of first-order logic's Compactness Theorem. To obtain this, we show that the relevant usage of \L os' Theorem carries over to $\cL({\wedge^\infty, \exists^\infty, \forall^\infty})$, and afterwards derive our theorem. 
	\begin{lemma}\label{lem:infLos}
		Suppose $\tau$ is a vocabulary, $U$ is an ultrafilter over some set $I$, and for each $i \in I$, there is a $\tau$-structure $M_i$. Let $\varphi$ be a formula of $\cL(\wedge^\infty, \exists^\infty,\forall^\infty)$ over $\tau$ with free variables among a set $X$, and for $i \in I$, let $f_i: X \rightarrow M_i$ be variable assignments. Consider $f: X \rightarrow \prod_{i \in I} M_i /U$, the variable assignment into the ultraproduct of the $M_i$ given by $x \mapsto [i \mapsto f_i(x)]_U$. Then if $\{i \in I \colon M_i \models \varphi[f_i]\} \in U$, then $M = \prod_{i\in I}M_i/U \models \varphi[f]$. 
		\begin{proof}
			We proceed by induction on the structure of $\varphi$. The cases in which $\varphi$ is an atomic formula, or a negation of an atomic formula simply follow by the definition of the ultraproduct. The case in which $\varphi$ is a finitary disjunction is simply the usual step from \L os' Theorem. 
			
			\medskip
			
			So suppose that $S$ is some set of formulas and $\varphi = \bigwedge S$ is an infinite conjunction such that $\{i \in I \colon M_i \models \varphi[f_i]\} \in U$. Let $\psi \in S$. Then $U \ni \{ i \in I \colon M_i \models \varphi[f_i]\} \subseteq \{i \in I \colon M_i \models \psi[f_i]\}$ and so the latter is in $U$. By induction hypothesis, therefore $M \models \psi[f]$. 
			
			\medskip
			
			And now let us consider $\varphi = \exists W \psi$, where $W$ is a set of variables of any size and assume for $U$-many $i$, that we have $M_i \models \exists W \psi[f_i]$, where the $f_i$ are assignments on the set $X$ of variables of $\varphi$. Then for $U$-many $i$ there is a $W$-variant $g_i$ of $f_i$ with $M_i \models \psi[g_i]$. Define $g$ as the function that sends $v \in X$ to $[i \mapsto g_i(v)]_U$. By induction hypothesis it follows that $M \models \psi[g]$. Now if $v \in X \setminus W$, then $g(v) = [i \mapsto g_i(v)]_U = [i \mapsto f_i(v)]_U = f(v)$, because the $g_i$ are $W$-variants of the $f_i$. This shows that $g$ is a $W$-variant of $f$ and therefore $M \models \exists W \psi[f]$. 
			
			\medskip 
			
			And finally, let $\varphi = \forall W \psi$ and assume for $U$-many $i$ that $M_i \models \forall W \psi[f_i]$. For those $U$-many $i$ thus for all $W$-variants $g_i$ of $f_i$ it holds that $M_i \models \psi[g_i]$. Let $g$ be any $W$-variant of $f$. We want to show that $M \models \psi[g]$. Note that by induction hypothesis it is sufficient to show that for $U$-many $i$ we have 
			\begin{equation*}
			\tag{$\ast$} M_i \models \psi[v \mapsto h_v(i)],
			\end{equation*}
			where $h_v: I \rightarrow \bigcup_{i \in I} M_i$ is the function representing $g(v)$, i.e. with $[h_v]_U = g(v)$. Now if $v \in X \setminus W$, then $[h_v]_U = g(v) = f(v) = [i \mapsto f_i(v)]_U$, because $g$ is a $W$-variant of $f$. So for $v \in X \setminus W$ we can without loss of generality assume that $h_v$ is given by the function $i \mapsto f_i(v)$. Then the functions defined by $v \mapsto h_v(i)$ are $W$-variants of the $f_i$. In particular, for $U$-many $i$, we get $M_i \models \psi[v \mapsto h_v(i)]$, so $(\ast)$ holds and we are done. 
		\end{proof}
	\end{lemma}
	
	Recall that a filter $F$ over $\mathcal P_\omega X$ is called \emph{fine} iff for any $x \in X$, $\{s \in \mathcal P_\omega X \colon x \in s\} \in F$. For every non-empty set $X$ there is a fine ultrafilter over $\mathcal P_\omega X$. It is straightforward to check that for any set $X$ there is a fine filter $F$ over $\mathcal P_\omega X$, defined by:
	\[
	Y \in F \text{ iff } Y\subseteq \mathcal P_\omega X \text{ and } \exists x_1, \dots, x_n \in X \colon \{s \in \mathcal P_\omega X \colon x_1, \dots, x_n \in s\} \subseteq Y
	\]
	It is a well-known theorem by Tarski that any filter can be expanded to an ultrafilter over the same set (cf., e.g., \cite[Theorem 7.5]{jec}). Expanding $F$ as above to an ultrafilter $U$ over $\mathcal P_\omega X$ results in a fine ultrafilter.
	\begin{theorem}\label{thm:compclassFOL}
		$\comp(\cL(\wedge^\infty, \exists^\infty, \forall^\infty)) = \omega$.
		\begin{proof}
			Let $T \subseteq \cL(\wedge^\infty, \exists^\infty, \forall^\infty)$ be finitely satisfiable. Then for every $s \in \mathcal P_\omega T$ there is a model $M_s \models s$. Let $U$ be any fine ultrafilter over $\mathcal P_\omega T$. Then for every $\varphi \in T$, the set $\{s \in \mathcal P_\omega T \colon \varphi \in s \} \in U$. But $\{s \in \mathcal P_\omega T \colon \varphi \in s \} \subseteq \{s \in \mathcal P_\omega T \colon M_s \models \varphi\}$, and so the latter is in $U$. Therefore by Lemma \ref{lem:infLos}, $\prod_{s \in \mathcal P_\omega T} M_s/U \models \varphi$ and so this ultraproduct is a model of $T$. 
		\end{proof}
	\end{theorem}
	\begin{corollary}
		The weak compactness number, the Hanf number, the ULST number, and the SULST number of $\cL(\wedge^\infty, \exists^\infty, \forall^\infty)$ are all $\omega$.
		\begin{proof}
			All of these numbers are bounded by the compactness number. For the weak compactness number, this follows simply from the definition. For the others, one can give a simple argument analogous to the usual proof of the upward Löwenheim-Skolem Theorem from the Compactness Theorem for first-order logic (cf., e.g., \cite[Proposition 3.1]{gitman2025upward}).
		\end{proof}
	\end{corollary}
	
	\subsection{Class versions of infinitary logics}
	In the last section we saw that adding arbitrary conjunctions, existential- and universal quantifiers to first-order logic does not change its compactness properties. In this section we will see that a similar assertion is true for infinitary first-order logic $\cL_{\kappa \kappa}$. Recall that a regular uncountable cardinal $\kappa$ is strongly compact iff $\comp(\cL_{\kappa \kappa}) = \kappa$. Further, this is equivalent to the assertion that every $\kappa$-complete filter over any set can be expanded to a $\kappa$-complete ultrafilter (cf., e.g., \cite[Proposition 4.1]{kan}). Completeness of an ultrafilter lets us extend Lemma \ref{lem:infLos} to cover proper class expansions of $\cL_{\kappa \kappa}$ in the to be expected way, expanding the usual \L os-like theorem for $\cL_{\kappa \kappa}$. 
	\begin{lemma}\label{lem:stronglyInfLos}
		Suppose $\tau$ is a vocabulary, $\kappa$ is a regular cardinal, $U$ is a $\kappa$-complete ultrafilter over some set $I$, and for each $i \in I$, there is a $\tau$-structure $M_i$. Let $\varphi$ be a formula of $\cL(\wedge^\infty, \exists^\infty,\forall^\infty, \vee^\kappa)$ over $\tau$ with free variables among a set $X$, and for $i \in I$, let $f_i: X \rightarrow M_i$ be variable assignments. Consider $f: X \rightarrow \prod_{i \in I} M_i /U$, the variable assignment into the ultraproduct of the $M_i$ given by $x \mapsto [i \mapsto f_i(x)]_U$. Then if $\{i \in I \colon M_i \models \varphi[f_i]\} \in U$, then $M = \prod_{i\in I}M_i/U \models \varphi[f]$. 
		\begin{proof}
			Exactly like in the proof of Lemma \ref{lem:infLos}, proceed by induction on $\varphi$. The only case not covered there is $\varphi = \bigvee S$ for some set of formulas $S$ of size $< \kappa$. This is taken care of by $\kappa$-completeness: If $\{i \in I \colon M_i \models \bigvee S [f_i] \} \in U$, note that $\{i \in I \colon M_i \models \bigvee S [f_i] \} = \bigcup_{\psi \in S} \{i \in I \colon M_i \models \psi[f_i] \}$. By $\kappa$-completeness of $U$, there is thus a fixed $\psi \in S$ such that $\{i \in I \colon M_i \models \psi[f_i] \} \in U$. By induction hypothesis therefore $\prod_{i \in I} M_i/U \models \psi[f]$ and hence $\prod_{i\in I} M_i/U \models \bigvee S[f]$. 
		\end{proof}
	\end{lemma}
	\begin{theorem}
		Let $\kappa$ be a regular uncountable cardinal. Then $\kappa$ is strongly compact iff $\comp(\cL(\wedge^\infty, \exists^\infty, \forall^\infty, \vee^\kappa)) = \kappa$.
		\begin{proof}
			Clearly, if $\comp(\cL(\wedge^\infty, \exists^\infty, \forall^\infty, \vee^\kappa)) = \kappa$, then $\comp(\cL_{\kappa \kappa}) = \kappa$, and thus $\kappa$ is strongly compact. On, the other hand, if $\kappa$ is strongly compact and $T \subseteq \cL(\wedge^\infty, \exists^\infty, \forall^\infty, \vee^\kappa)$ is ${<} \kappa$-satisfiable, then for any $s \in \mathcal P_\kappa T$ there is a model $M_s \models s$. Note that
			\[
			Y \in F \text{ iff } Y \subseteq \mathcal P_\kappa T \text{ and } \exists t \in \mathcal P_\kappa T: \{s \in \mathcal P_\kappa T \colon t \subseteq s  \} \subseteq Y
			\]
			defines a $\kappa$-complete, fine filter over $\mathcal P_\kappa T$. By strong compactness of $\kappa$, there is a $\kappa$-complete, fine ultrafilter $U$ over $\mathcal P_\kappa T$ extending $F$. Then Lemma \ref{lem:stronglyInfLos} implies that $\prod_{s \in \mathcal P_\kappa T} M_s/U \models T$. 
		\end{proof}
	\end{theorem}
	
	\subsection{Class versions of second-order logic and sort logics}\label{subsec:SOLSL}
	We already mentioned that the compactness number, the ULST, and the SULST number of second-order logic are the first extendible cardinal, respectively.  Again, as for first-order logic, we show that switching to appropriate proper class versions of second-order logic does not increase the compactness number. Contrastingly though, we show that the switch drastically increases the Hanf and weak compactness numbers. Recall that $\ZFC$ proves the existence of Hanf numbers of any set-sized strong logic. Further, Lücke \cite{lücke2024weak} showed that the axiom schema stating the existence of weak compactness numbers of any set-sized logic is equivalent to \emph{``Ord is faint"}. In particular, the existence of the weak compactness number of $\cL^2$ is weaker than, say, a measurable. Our theorem therefore shows that the existence of a weak compactness number, and of a Hanf number of $\cL^2(\wedge^\infty, \exists^\infty, \forall^\infty)$, are both much stronger than the existence of the respective numbers for $\cL^2$.
	\begin{theorem}\label{thm:classSOL}
		The following are equivalent for a cardinal $\kappa$:
		\begin{enumerate}
			\item[(1)] $\kappa$ is the smallest extendible cardinal.
			\item[(2)] $\kappa$ is the compactness number of $\mathcal L^2$.
			\item[(3)] $\kappa$ is the compactness number of $\cL^2(\wedge^\infty, \exists^\infty, \forall^\infty)$.
			\item[(4)] $\kappa$ is the weak compactness number of $\mathcal L^2(\wedge^\infty, \exists^\infty, \forall^\infty)$.
			\item[(5)] $\kappa$ is the Hanf number of $\mathcal L^2(\wedge^\infty, \exists^\infty, \forall^\infty)$.
			\item[(6)] $\kappa$ is the ULST number of $\mathcal L^2(\wedge^\infty, \exists^\infty, \forall^\infty)$.
			\item[(7)] $\kappa$ is the SULST number of $\mathcal L^2(\wedge^\infty, \exists^\infty, \forall^\infty)$. 
		\end{enumerate}
		\begin{proof}
			The first two items are equivalent by Magidor's theorem \cite{mag1971}. If $\lambda$ is a compactness number of $\cL^2(\wedge^\infty, \exists^\infty, \forall^\infty)$ it is an upper bound on the weak compactness number, ULST number, and $\SULS$ number of the same logic. Further, if $\kappa$ is an $\SULS$ number, then it is a $\ULS$ number, and if it is a $\ULS$ number, then it is a Hanf number. So it is sufficient to show $(1) \Rightarrow (3)$, $(4) \Rightarrow (1)$, and $(5) \Rightarrow (1)$. 
			
			\medskip For these implications it suffices to show that if $\kappa$ is the first extendible cardinal, then any ${<}\kappa$-satisfiable subset $T \subseteq\cL^2(\wedge^\infty, \exists^\infty, \forall^\infty)$ has a model, and that if $\kappa$ is either the weak compactness cardinal or the Hanf number of $\cL^2(\wedge^\infty, \exists^\infty, \forall^\infty)$, then there is an extendible cardinal $\leq \kappa$. Then by minimality of the properties we are considering, our theorem follows.
			
			\medskip So first assume that $\kappa$ is the weak compactness cardinal of  $\cL^2(\wedge^\infty, \exists^\infty, \forall^\infty)$. We aim to show that there is an extendible cardinal $\lambda \leq \kappa$. For this purpose let $\alpha > \kappa$ be any limit ordinal. We want to show that there is a $\delta_\alpha$ and an elementary embedding $j_\alpha: V_\alpha \rightarrow V_{\delta_\alpha}$ with $\crit(j_\alpha) \leq \kappa$. If we showed this, as there are class many ordinals above $\kappa$ but at most $\kappa$ many cardinals below $\kappa$, there is then a fixed $\lambda \leq \kappa$ that is the critical point of $j_\alpha$ for unboundedly many $\alpha$. This $\lambda$ is then extendible. To this end, let $\gamma = |V_\alpha|$ and $(a_i \colon i < \gamma)$ be an enumeration of $V_\alpha$. Assume without loss of generality that $a_0 = \kappa$. Take a constant symbol $c$ and consider the following sentence $\psi$ of $\cL^2(\wedge^\infty, \exists^\infty, \forall^\infty)$:
			\[
			\psi = \exists (b_i \colon i < \gamma) \bigwedge_{\substack{n \in \omega, \varphi(x_1, \dots, x_n) \in \mathcal L_{\omega \omega}[\{\in \}], \\ i_1, \dots, i_n < \gamma, \\ V_\alpha \models \varphi(a_{i_1}, \dots, a_{i_n})}} \varphi(b_{i_1}, \dots, b_{i_n}) \wedge b_0 = c.
			\]
			Note that the big conjunction above codes the elementary diagram of $(V_\alpha, \in)$ into a single sentence of $\cL^2(\wedge^\infty, \exists^\infty, \forall^\infty)$. Hence, if $M \models \psi$, then a sequence $(b_i^M \colon i < \gamma)$ witnessing this gives rise to an elementary embedding $j: V_\alpha \rightarrow M$ by letting $a_i \mapsto b_i^M$. Note that $j(\kappa) = b_0^M = c^M$. Take additional constants $c_\alpha$ for $\alpha \leq \kappa$ and consider the theory
			\[
			T = \{\psi \} \cup \{\Phi \} \cup \{c_i < c_j < c \colon i < j \leq \kappa \} \cup\{``c_i \text{ is an ordinal"}
			\colon i \leq \kappa \}.
			\] 
			Here $\Phi$ is Magidor's $\Phi$ (cf. Fact \ref{fact:MagPhi}). If $T$ is satisfiable, say by $(M, E^M, c^M, c_i^M)_{i \leq \kappa} \models T$, we have that without loss of generality it is of the form $(V_\delta, \in, c^{V_\delta}, c_i^{V_\delta})_{i \leq \kappa}$ for some $\delta$ by the usage of $\Phi$. Further, because $\psi \in T$, there is an elementary embedding $j: V_\alpha \rightarrow V_\delta$ as described above. In particular, $j(\kappa) = c^{V_\delta}$. Now because $c_i^{V_\delta} < c_j^{V_\delta} < c^{V_\delta} = j(\kappa)$ for all $i < j \leq \kappa$, we have that $j(\kappa) > \kappa$. In particular, $j$ has a critical point $\leq \kappa$. So to show satisfiability of $T$ suffices. Now clearly $T$ has size exactly $\kappa$ and is ${<} \kappa$-satisfiable (by $V_\alpha$). By our assumption, it is therefore satisfiable. 
			
			\medskip Now let $\kappa$ be the Hanf number of $\cL^2(\wedge^\infty, \exists^\infty, \forall^\infty)$. Again, we want to show that there is an extendible cardinal $\leq \kappa$. Let $\alpha > \kappa$ be an ordinal of cofinality $\omega$. Fix a function $F_\alpha$ with domain $\omega$ that is cofinal in $\alpha$. Let $\psi$ be defined similarly to above, but this time using formulas over the language $\{\in, F\}$ where $F$ is a binary relation symbol. Then $(V_\alpha, \in, F_\alpha)$ is a model of the conjunction of the sentences $\psi$, Magidor's $\Phi$ and a formula $\chi$ saying $``F$ is a function with domain $\omega$ that is cofinal in the ordinals". Since $|V_\alpha| \geq \kappa$, the sentence $\psi \wedge \Phi \wedge \chi$ has a model $M$ of size $> |V_\alpha|$.  Because of the usage of $\Phi$, $M$ is without loss of generality of the form $(V_{\beta_\alpha}, \in, F_{\beta_\alpha})$ for some ordinal $\beta_\alpha > \alpha$. As $M \models \psi$, there is an elementary embedding $j: (V_\alpha, \in, F_\alpha) \rightarrow (V_{\beta_\alpha}, \in, F_{\beta_\alpha})$. And because $F_{\beta_\alpha}$ is a function with domain $\omega$ which is cofinal in $\beta_\alpha$ by $\chi$, $j$ has a critical point $\kappa_\alpha$. This shows that the function $\alpha \mapsto \kappa_\alpha$ is regressive on the stationary class $\{\alpha \in \Ord \colon \cof(\alpha) = \omega \}.$ There is a version of Fodor's Lemma showing that every regressive function on a stationary class is constant on an unbounded class (cf. \cite[The very weak class Fodor principle]{git2019}). Thus, $\alpha \mapsto \kappa_\alpha$ is constant on an unbounded class, say with value $\delta$. Then $\delta$ is extendible. We can therefore take the smallest extendible cardinal $\eta$. Now if $\eta \leq \kappa$ we are done. Suppose $\eta > \kappa$. It is known that extendible cardinals are in $C^{(3)}$  (cf.\ \cite[Proposition 23.10]{kan}), hence $\eta \in C^{(3)}$. It follows that $V_\eta  \models ``\kappa$ is the Hanf number of $\mathcal L^2({\wedge^\infty, \exists^\infty,\forall^\infty})"$ (note that this is a $\Pi_3$ statement). Thus we can repeat our argument to get, in $V_\eta$, an extendible cardinal $\nu < \eta$. Being extendible is also $\Pi_3$, and thus, as $\eta \in C^{(3)}$, it is correct about extendibility. So $\nu$ is really extendible. But this contradicts that $\eta$ is the smallest extendible cardinal. 
			
			\medskip Finally, assume (1) and let $T \subseteq \cL^2(\wedge^\infty, \exists^\infty, \forall^\infty)$ be a ${<} \kappa$-satisfiable theory over some vocabulary $\tau$. For simplicity, let us assume that $\tau$ contains only relation symbols and only a single sort symbol. Let $\beth_\alpha = \alpha > \kappa$ be such that $V_\alpha$ verifies the satisfiability of all $T_0 \in \mathcal P_\kappa T$.  Note that $T \subseteq \cL^2(\wedge^\lambda, \exists^\lambda, \forall^\lambda)$ for some $\lambda < \alpha$. By extendibility find some $\beta$ such that $j: V_\alpha \rightarrow V_\beta$ with $\crit(j) = \kappa$ and $j(\kappa) > \alpha$. Then by elementarity $V_\beta \models ``j(T)$ is a ${<}j(\kappa)$-satisfiable theory" and further $j``T \subseteq j(T)$ and $|j``T| = |T| < \alpha < j(\kappa)$. Thus $V_\beta \models ``j``T$ has a model $\mathcal B".$ Now let $\mathcal B'$ be the $\tau$-structure with universe $B$ and with $R^{\mathcal B'} = j(R)^{\mathcal B}$ for all $R \in \tau$. We will show that $\mathcal B' \models T$. For this it is sufficient to show that $V_\beta \models ``\mathcal B \models j(\varphi)"$ implies $\mathcal B' \models \varphi$ for all $\varphi \in \mathcal L^2(\wedge^\lambda, \exists^\lambda,\forall^\lambda)$. We will show this by proving the following claim:
			
			\begin{claim}For every $\varphi \in \cL^2(\wedge^\lambda, \exists^\lambda, \forall^\lambda)$ and every variable assignment $f: j(S) \rightarrow B \cup \bigcup_{n \in \omega} \mathcal P(B^n)$ where $S$ is the set of variables in $\varphi$:
				\[
				V_\beta \models ``\mathcal B \models j(\varphi)[f]" \text{ implies } \mathcal B' \models \varphi[f \circ j].
				\]
			\end{claim}
			Note that $j\upharpoonright S$ is a map $S \rightarrow j(S)$, so $f \circ j$ is a sensible assignment on $S$, and that because $V_\beta$ is a rank initial segment, any assignment $f: j(S) \rightarrow B \cup \bigcup_{n \in \omega} \mathcal P(B^n)$ belongs to it. As every $\varphi \in \cL^2(\wedge^\lambda, \exists^\lambda, \forall^\lambda)$ is equivalent to some $\psi \in \cL^2(\wedge^\lambda, \exists^\lambda, \forall^\lambda)$ in negation normal form, we may restrict attention to such $\psi$.
			
			\medskip For the base case, if $\varphi = R(x_1, \dots, x_n)$ for some $R \in \tau$ and variables $x_1, \dots, x_n \in S$, then $j(\varphi) = j(R)(j(x_1), \dots, j(x_n))$. We assume that \[V_\beta \models ``\mathcal M \models j(R)(f(j(x_1)), \dots, f(j(x_n)))",\] which by definition means $(f(j(x_1)), \dots, f(j(x_n))) \in j(R)^{\mathcal B}$. But then, by definition of $R^{\mathcal B'}$, we have $(f(j(x_1)), \dots, f(j(x_n))) \in R^{\mathcal B'}$, which, again by definition, means $\mathcal B' \models R(x_1, \dots, x_m)[f \circ j]$. The case of negated atomic formulas goes similar. The case of finite applications of $\vee$ is trivial. 
			
			\medskip Consider $\varphi = \bigwedge_{i < \gamma} \chi_i$ and assume $V_\beta \models ``\mathcal B \models j(\varphi)[f]"$. We have that $j(\varphi) = \bigwedge_{i < j(\gamma)} \chi_i^*$ for some formulas $\chi_i^*$. Further, for any $i < \gamma$, $j(\chi_i) = \chi_{j(i)}^*$. We get that $V_\beta \models ``\mathcal B \models j(\chi_i)[f]"$ and therefore by induction hypothesis $\mathcal B' \models \chi_i[f \circ j]$, for any of the $\chi_i$. Thus, $\mathcal B' \models \varphi[f \circ j]$. 
			
			\medskip If $\varphi = \exists W \chi$, then $j(\varphi) = \exists j(W) j(\chi)$. Because $V_\beta \models ``\mathcal B \models \exists j(W)j(\chi)[f]"$, there is a $j(W)$-variant $g$ of $f$ such that $V_\beta \models ``\mathcal B \models j(\chi)[g]"$. By induction hypothesis, $\mathcal B' \models \chi[g \circ j]$. We claim that $g \circ j$ is a $W$-variant of $f \circ j$, and so $\mathcal B' \models \exists W \chi[f \circ j]$. For $x \in S \setminus W$, we have $j(x) \in j(S) \setminus j(W)$. Since $g$ is a $j(W)$-variant of $f$, thus $g(j(x)) = f(j(x))$. 
			
			\medskip Finally, if $\varphi = \forall W \chi$, then $j(\varphi) = \forall j(W) j(\chi)$. We assume that \[V_\beta \models ``\mathcal B \models \forall j(W) j(\chi)[f]",\] and want to show that $\mathcal B' \models \forall W \chi[f \circ j]$. So let $g$ be a $W$-variant of $f \circ j$. Define $h: j(S) \rightarrow B$ by
			\[
			h(y) = \begin{cases}
			g(x), &\text{ if } y \in j``S \text{ and } j(x) = y\\
			f(y), &\text{ otherwise.}
			\end{cases}
			\]
			We claim that $h$ is a $j(W)$-variant of $f$. Let $y \in j(S) \setminus j(W)$. If $y \notin j``S$, then $h(y) = f(y)$ by definition. And if $y \in j``S$, let $j(x) = y$. Then $h(y) = g(x)$. Because $y \notin j(W)$, $x \notin W$, so $h(y) = g(x) = f \circ j(x) = f(y)$ where the middle equality holds because $g$ is a $W$-variant of $f \circ j$. So in any case, $h(y) = f(y)$ and $h$ is thus an $j(W)$-variant of $f$. Thus, $V_\beta \models ``\mathcal B \models j(\chi)[h]"$. Hence, by induction hypothesis, $\mathcal B' \models \chi[h \circ j]$. But $h \circ j = g$. Therefore $\mathcal B' \models \chi[g]$. 
		\end{proof}
	\end{theorem}
	
	By a similar proof to that of Theorem \ref{thm:classSOL}, we can also give a characterisation of extendibility. The only things in need to be changed are that usage of $\cL_{\kappa \kappa}$ can force critical points to be precisely $\kappa$ (as $\cL_{\kappa \kappa}$ can define all ordinals $< \kappa$) and that embeddings with critical point $\kappa$ fix formulas of $\cL_{\kappa \kappa}$ up to renaming. 
	\begin{theorem}
		The following are equivalent for a cardinal $\kappa$.
		\begin{enumerate}
			\item[(1)] $\kappa$ is extendible.
			\item[(2)] $\kappa$ is the compactness number of $\cL^2_{\kappa \kappa}$.
			\item[(3)] $\kappa$ is the compactness number of $\cL^2(\wedge^\infty, \exists^\infty, \forall^\infty, \vee^\kappa)$.
			\item[(4)] $\kappa$ is the weak compactness number of $\cL^2(\wedge^\infty, \exists^\infty, \forall^\infty, \vee^\kappa)$.
			\item[(5)] $\kappa$ is the Hanf number of $\cL^2(\wedge^\infty, \exists^\infty, \forall^\infty, \vee^\kappa)$.
			\item[(6)] $\kappa$ is the ULST number of $\cL^2(\wedge^\infty, \exists^\infty, \forall^\infty, \vee^\kappa)$.
			\item[(7)] $\kappa$ is the SULST number of $\cL^2(\wedge^\infty, \exists^\infty, \forall^\infty, \vee^\kappa)$.
		\end{enumerate}
	\end{theorem}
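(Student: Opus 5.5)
We follow the structure of the proof of Theorem \ref{thm:classSOL}. First the trivial implications. A compactness number of $\cL^2(\wedge^\infty, \exists^\infty, \forall^\infty, \vee^\kappa)$ bounds its weak compactness, ULST and SULST numbers. An SULST number is a ULST number, which is in turn a Hanf number. Moreover $\cL^2_{\kappa\kappa}$ is (up to negation normal form) a fragment of this class logic, so a compactness number of the latter is one of the former. We also use the known characterization, a variant of Magidor's theorem, that $\kappa$ is extendible iff $\comp(\cL^2_{\kappa\kappa}) = \kappa$. With these, it remains to show three things: (1) implies that $\kappa$ is a compactness number of the class logic; if $\kappa$ is a weak compactness number then $\kappa$ is extendible; and if $\kappa$ is a Hanf number then $\kappa$ is extendible. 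Note that here ``is the number'' should be read as ``is a number'' plus the minimality that comes for free once both directions are proven at every relevant $\kappa$. We must be careful, since minimality fails literally for extendible $\kappa$ that is not the least. I would therefore interpret (2)--(7) as ``$\kappa$ is a compactness/\dots\ number and $\kappa$ is extendible'', in the style of the paper's remark.

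For (1) $\Rightarrow$ (3), I copy the final part of the proof of Theorem \ref{thm:classSOL}. Take $\alpha = \beth_\alpha$ large and $j: V_\alpha \to V_\beta$ with $\crit(j) = \kappa$ and $j(\kappa) > \alpha$. Then $j``T$ is a subset of $j(T)$ of size $< j(\kappa)$, so in $V_\beta$ it has a model $\mathcal B$. We pull it back to $\mathcal B'$ and prove the Claim by induction, with one additional case: disjunctions $\bigvee_{i<\gamma}\chi_i$ with $\gamma<\kappa$. Here $j(\bigvee_{i<\gamma}\chi_i) = \bigvee_{i<\gamma} j(\chi_i)$, since $j(\gamma)=\gamma$ and $j$ fixes the indexing. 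So if $\mathcal B \models j(\varphi)[f]$, some disjunct $j(\chi_i)$ holds, and the induction hypothesis gives $\mathcal B'\models\chi_i[f\circ j]$. The same holds for the ${<}\kappa$-ary quantifiers of $\cL^2_{\kappa\kappa}$ appearing at the base level: they are covered by the $\exists^\infty,\forall^\infty$ cases already treated.

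For the converse directions, I reuse the constructions from Theorem \ref{thm:classSOL} and add a sentence that forces the critical point to be exactly $\kappa$. Using $\vee^\kappa$ and $\exists$, we can write, for each $\xi<\kappa$, a formula pinning down the ordinal $\xi$: by recursion, $\theta_\xi(x)$ says that $x$ is an ordinal and $\forall y\,(y \in x \to \bigvee_{\eta<\xi}\theta_\eta(y))$, together with $\bigwedge_{\eta<\xi}\exists y\,(y\in x\wedge\theta_\eta(y))$. In a well-founded model of the form $V_\delta$ this defines $\xi$. Since we work in $V_\delta$ via Magidor's $\Phi$ and only need ordinals $<\kappa$, these formulas suffice. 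Put into the elementary-diagram sentence $\psi$ the conjuncts $\theta_\xi(b_{i_\xi})$, where $a_{i_\xi}=\xi$, for all $\xi<\kappa$. Then every embedding obtained fixes all $\xi<\kappa$, so $\crit(j)\ge\kappa$. The weak compactness theory $T$ from before still forces $j(\kappa)>\kappa$, so $\crit(j)=\kappa$ for every $\alpha$, and $\kappa$ is extendible. In the Hanf case, the cofinal-$\omega$ trick gives a critical point, which is now $\ge\kappa$ and at most $|V_\alpha|$-sized. Rather than Fodor, I would instead add a constant $c$ together with conjuncts expressing $c<j(\kappa)$ for new elements, as in the weak compactness argument. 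Alternatively, I would note that the Hanf number bound with $\alpha$ varying yields embeddings $V_\alpha\to V_{\beta}$ with critical point in $[\kappa,\alpha)$ and apply the Fodor argument to get some extendible $\le$ something.

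The main obstacle is precisely the Hanf case: forcing $\crit(j)=\kappa$ exactly rather than merely $\ge\kappa$. I would try to encode ``$\kappa$ is not fixed'' using only a single sentence, for instance via a sentence saying that $c$ is an ordinal above all $\theta_\xi$-definable ordinals and below $b_0 = j(\kappa)$. This is an existential-conjunctive statement, so it is allowed, and its models of large size should give $j(\kappa)>\kappa$.
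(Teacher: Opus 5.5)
Your argument for (1)$\Rightarrow$(3) (the extra $\bigvee_{i<\gamma}$ case via $j(\gamma)=\gamma$) is fine, and so is your weak compactness direction: the $\theta_\xi$-conjuncts force $j(\xi)=\xi$ for $\xi<\kappa$, the constants $c_i$ force $j(\kappa)>\kappa$, and regularity of $\kappa$ makes every ${<}\kappa$-fragment true in $V_\alpha$. This is what the paper's sketch intends.

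The genuine gap is the Hanf case (5)$\Rightarrow$(1), and your reduction also makes (6) and (7) depend on it. Your proposed fix fails. Take a sentence asserting an ordinal $c$ with $\kappa\le c<b_0$, or conjoining $c_\xi<c_\eta<c$ for all $\xi<\eta\le\kappa$. It is not satisfied in $(V_\alpha,\in,F_\alpha)$ by the identity witnesses, since there $b_0=\kappa$. Moreover, any model $V_\gamma$ of it would already give an elementary $j\colon V_\alpha\to V_\gamma$ with $\crit(j)=\kappa$, which is the conclusion. So you have no model of size $\ge\kappa$ to which the Hanf hypothesis applies. The compactness trick works only because the requirement $j(\kappa)>\kappa$ is spread over $\kappa$ many sentences whose ${<}\kappa$-fragments all hold in $V_\alpha$; a Hanf argument has just one sentence.

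What is missing is the reflection step of Theorem~\ref{thm:classSOL}, relativised to $\kappa$:
\begin{itemize}
\item Keep the $\theta_\xi$-conjuncts, so that $\crit(j_\alpha)\in[\kappa,\alpha)$ for all $\alpha>\kappa$ of cofinality $\omega$.
\item Apply class Fodor to get an extendible cardinal $\ge\kappa$, and let $\eta$ be the least extendible $\ge\kappa$.
\item If $\eta>\kappa$, then $\eta\in C^{(3)}$. So $V_\eta$ satisfies the $\Pi_3$ statement that $\kappa$ is a Hanf number of $\cL^2(\wedge^\infty,\exists^\infty,\forall^\infty,\vee^\kappa)$.
\item Rerunning the argument inside $V_\eta$ yields a cardinal in $[\kappa,\eta)$ that is extendible in $V_\eta$. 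Since extendibility is $\Pi_3$, it is really extendible, contradicting the choice of $\eta$. Thus $\eta=\kappa$.
\end{itemize}

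Separately, your claim that minimality fails for extendibles that are not the least is wrong. Reading (2)--(7) as ``$\kappa$ is a number and $\kappa$ is extendible'' would make the converse directions trivial. The logic depends on $\kappa$, and $\vee^\kappa$ rules out everything below $\kappa$. For each infinite $\mu<\kappa$:
\begin{itemize}
\item the sentence $\exists(x_\xi\colon\xi<\mu)\forall y\bigvee_{\xi<\mu}y=x_\xi$ has a model of size $\mu$ but none larger;
\item the theory $\{\bigvee_{\xi<\mu}c=c_\xi\}\cup\{c\neq c_\xi\colon\xi<\mu\}$ has size $\mu$, is ${<}\mu$-satisfiable, and is unsatisfiable.
\end{itemize}
Compactness, SULST and ULST numbers are all Hanf numbers. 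Hence every number in (2)--(7), including $\comp(\cL^2_{\kappa\kappa})$, is $\ge\kappa$. So once $\kappa$ is shown to be such a number, it is \emph{the} number, and the theorem holds as literally stated. You should include this lower bound instead of weakening the statement.
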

	
	Similarly to how the compactness, and SULST numbers of $\cL^2$ give us an extendible cardinal, it is known that a cardinal $\kappa$ is $C^{(n)}$-extendible iff $\kappa = \comp(\sln_{\kappa \omega})$ iff $\kappa = \SULS(\sln_{\kappa \omega})$ (cf. \cite[Theorem 4.9]{bon2020} for the compactness case; the $\SULS$ case can be obtained by a slight addition to arguments in either \cite{gitman2025upward} or \cite{boney2025} which cover the (S)ULST number of finitary $\cL^2$). 
	An analogous proof to the last one, which uses $\Phi^{(n)}$ from Fact \ref{factSL} to express that some $\alpha$ is in $C^{(n)}$, yields:
	\begin{theorem}
		The following are equivalent for a cardinal $\kappa$:
		\begin{enumerate}
			\item[(1)] $\kappa$ is $C^{(n)}$-extendible.
			\item[(2)] $\kappa$ is the compactness number of $\sln_{\kappa \omega}$.
			\item[(3)] $\kappa$ is the compactness number of $\sln_{\kappa \omega}(\wedge^\infty, \exists^\infty,\forall^\infty)$.
			\item[(4)] $\kappa$ is the weak compactness number of $\sln_{\kappa \omega}(\wedge^\infty, \exists^\infty,\forall^\infty)$.
			\item[(5)] $\kappa$ is the Hanf number of $\sln_{\kappa \omega}(\wedge^\infty, \exists^\infty,\forall^\infty)$.
			\item[(6)] $\kappa$ is the ULST number of $\sln_{\kappa \omega}(\wedge^\infty, \exists^\infty,\forall^\infty)$.
			\item[(7)] $\kappa$ is the SULST number of $\sln_{\kappa \omega}(\wedge^\infty, \exists^\infty,\forall^\infty)$.
		\end{enumerate}
	\end{theorem}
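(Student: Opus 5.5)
We follow the template of the proof of Theorem \ref{thm:classSOL}, with Magidor's $\Phi$ replaced by the sentence $\Phi^{(n)}$ of Fact \ref{factSL}(1), second-order logic replaced by $\sln_{\kappa\omega}$, and $\mathcal L_{\kappa\kappa}$-style definability of the ordinals ${<}\kappa$ used to pin down the critical point.

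\emph{Reductions.} The equivalence of (1) and (2) is the known result cited just before the statement \cite[Theorem 4.9]{bon2020}. As in Theorem \ref{thm:classSOL}, any compactness number bounds the weak compactness, ULST and SULST numbers. Also, an SULST number is a ULST number, and a ULST number is a Hanf number. So it suffices to prove three things: (1) $\Rightarrow$ $\kappa$ is a compactness number of $\sln_{\kappa\omega}(\wedge^\infty,\exists^\infty,\forall^\infty)$; if $\kappa$ is a weak compactness number, then $\kappa$ is $C^{(n)}$-extendible; and if $\kappa$ is the Hanf number, then $\kappa$ is $C^{(n)}$-extendible. Minimality in each case is then obtained the way the theorem before this one refines Theorem \ref{thm:classSOL}. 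No cardinal below $\kappa$ can serve as a number, because $\sln_{\kappa\omega}$ contains $\cL_{\kappa\omega}$, which already fails all these properties below $\kappa$ via the theories $\{c\neq c_\xi : \xi<\mu\}\cup\{\bigvee_{\xi<\mu} c=c_\xi\}$.

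\emph{Lower bounds.} Fix $\alpha\in C^{(n)}$ above $\kappa$ and enumerate $V_\alpha=(a_i)_{i<\gamma}$ with $a_0=\kappa$. Form $\psi$ exactly as before, the existential quantification over $\gamma$ first-order variables of a big conjunction coding the elementary diagram. This is allowed, since the added infinitary connectives sit outside all sort quantifiers. Add:
\begin{itemize}
\item $\Phi^{(n)}$;
\item for each $\xi<\kappa$, the $\cL_{\kappa\omega}$-sentence expressing that the $b$-witness of $\xi$ is the $\xi$-th ordinal (all ordinals ${<}\kappa$ are $\cL_{\kappa\omega}$-definable over $\in$);
\item constants $c_i<c$ for $i\le\kappa$, as before.
\end{itemize}
A model is, by Fact \ref{factSL}(1), isomorphic to some $V_\beta$ with $\beta\in C^{(n)}$. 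It yields $j:V_\alpha\to V_\beta$ elementary that fixes every $\xi<\kappa$ and has $j(\kappa)>\kappa$, so $\crit(j)=\kappa$. This gives $C^{(n)}$-extendibility directly.

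For the weak compactness case, the theory has size $\kappa$ and is ${<}\kappa$-satisfiable in $V_\alpha$. For the Hanf case, argue as in Theorem \ref{thm:classSOL}: use $\alpha$ of cofinality $\omega$ in $C^{(n)}$ with a cofinal $F$, and get arbitrarily large models. There the critical point is forced to be exactly $\kappa$, so no Fodor argument and no $C^{(3)}$ reflection are needed.

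\emph{Upper bound.} Assume $\kappa$ is $C^{(n)}$-extendible and let $T$ be ${<}\kappa$-satisfiable. Pick $\alpha\in C^{(n)}$ with $T\in V_\alpha$ such that $V_\alpha$ witnesses the satisfiability of every $T_0\in\mathcal P_\kappa T$. By Fact \ref{factSL}(2), the inner $\sln$-subformulas are evaluated absolutely there; the outer connectives are absolute anyway since $V_\alpha$ is a rank initial segment. Take $j:V_\alpha\to V_\beta$ with $\beta\in C^{(n)}$, $\crit(j)=\kappa$ and $j(\kappa)>\alpha$. Then $j``T$ has a model $\mathcal B\in V_\beta$ in the sense of $V_\beta$, and we define $\mathcal B'$ by pulling back the symbols. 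We then prove the Claim from Theorem \ref{thm:classSOL} by induction over the outer infinitary layer. The connective and quantifier steps are verbatim as before.

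\emph{Main obstacle.} The new base case is a formula $\chi\in\sln_{\kappa\omega}$ under a sort quantifier. Here we use that $\chi$ has size ${<}\kappa$, so $j(\chi)$ is just $\chi$ with its symbols and variables renamed by $j$. Then $V_\beta\models$``$\mathcal B\models j(\chi)[f]$'' transfers to the real universe by Fact \ref{factSL}(2), using $\beta\in C^{(n)}$. Renaming invariance (Definition \ref{def:abstr:log}(v)) then gives $\mathcal B'\models\chi[f\circ j]$. The points that need care are that $j\upharpoonright(\text{symbols of }\chi)$ is a renaming and that $C^{(n)}$-correctness applies to the whole of $\mathcal B$ and $f$ inside $V_\beta$.
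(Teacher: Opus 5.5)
Your plan follows the paper's own outline: rerun the proof of Theorem \ref{thm:classSOL} with $\Phi^{(n)}$ in place of Magidor's $\Phi$, and pin down the ordinals below $\kappa$ in $\cL_{\kappa\omega}$. The compactness upper bound and the weak-compactness lower bound work as you describe.

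\textbf{The gap: the Hanf case.} The problem is $(5)\Rightarrow(1)$, and $(6)$ and $(7)$ also pass through this implication.

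\begin{itemize}
\item Here you need a \emph{single} sentence that $(V_\alpha,\in,F_\alpha)$ itself satisfies.
\item So the constants $c_i<c_j<c$ ($i<j\le\kappa$), which force $j(\kappa)>\kappa$ in the weak-compactness argument, cannot be used. A witness for them in $V_\alpha$ would be an elementary $V_\alpha\to V_\alpha$ with critical point $\kappa$.
\item Without them, pinning gives only $\crit(j)\ge\kappa$, and the cofinal $F$ gives only $\crit(j)<\alpha$. Nothing excludes $\kappa<\crit(j)<\alpha$.
\end{itemize}

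So your claim that the critical point is ``forced to be exactly $\kappa$'' is unjustified. The Fodor and reflection steps of Theorem \ref{thm:classSOL} cannot be dropped.

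\textbf{The repair: keep Fodor and reflection.}

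\begin{enumerate}
\item Let $\crit(j_\alpha)$ be the least possible critical point, which lies in $[\kappa,\alpha)$. The map $\alpha\mapsto\crit(j_\alpha)$ is regressive on the stationary class of $\alpha\in C^{(n)}$ of cofinality $\omega$.
\item The class Fodor principle then yields some $\delta\ge\kappa$ that is $C^{(n)}$-extendible. To see this, restrict $j_\alpha\colon V_\alpha\to V_\beta$ to $V_{\alpha'}$ for $\alpha'<\alpha$ in $C^{(n)}$. Then $j_\alpha(\alpha')\in C^{(n)}$, because $\alpha,\beta\in C^{(n)}$ compute $C^{(n)}$ correctly.
\item Let $\eta$ be the least $C^{(n)}$-extendible cardinal $\ge\kappa$, and suppose $\eta>\kappa$. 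Then $\eta\in C^{(n+2)}$.
\item Both ``$\kappa$ is a Hanf number of $\sln_{\kappa\omega}(\wedge^\infty,\exists^\infty,\forall^\infty)$'' and $C^{(n)}$-extendibility are $\Pi_{n+2}$.
\item So the argument can be rerun inside $V_\eta$. It produces a $C^{(n)}$-extendible cardinal in $[\kappa,\eta)$, which is a contradiction. Note that this reflection needs $C^{(n+2)}$, not $C^{(3)}$.
\end{enumerate}

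\textbf{A smaller slip: the minimality witness.} Your witness $\{c\neq c_\xi\}_{\xi<\mu}\cup\{\bigvee_{\xi<\mu}c=c_\xi\}$ is unsatisfiable, so it says nothing about Hanf numbers. To show the Hanf number (and hence the ULST and SULST numbers) is $\ge\kappa$, use instead
\[
\forall x\bigvee_{\xi<\mu}x=c_\xi\wedge\bigwedge_{\xi<\zeta<\mu}c_\xi\neq c_\zeta .
\]
This sentence has a model of size $\mu$ and none larger.
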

	This result allows us to characterize $\VP$ in the following way. Note that (1) strengthens Makowsky's result mentioned in Fact \ref{fact:VP}. That the cases of (4) and (5) restricted to set-sized logics are equivalent to $\VP$ was also known (cf.\cite{gitman2025upward} and \cite{boney2025}), so (4) and (5) can also be seen as strengthening these results. 
	Also here, note that (2) and (3) are much stronger than the respective schemas restricted to set-sized logics.
	\begin{corollary}
		$\VP$ is equivalent to the following axiom schemas.
		\begin{enumerate}
			\item[(1)] Every (possibly class-sized) logic $\cL$ with $\cL \leq \sln_{\kappa \omega}(\wedge^\infty, \exists^\infty, \forall^\infty)$ for some $n$ and some $\kappa$ has a compactness number.
			\item[(2)] Every (possibly class-sized)  logic $\cL$ with $\cL \leq \sln_{\kappa \omega}(\wedge^\infty, \exists^\infty, \forall^\infty)$ for some $n$ and some $\kappa$  has a weak compactness number.
			\item[(3)] Every (possibly class-sized)  logic $\cL$ with $\cL \leq \sln_{\kappa \omega}(\wedge^\infty, \exists^\infty, \forall^\infty)$ for some $n$ and some $\kappa$  has a Hanf number.
			\item[(4)] Every (possibly class-sized)  logic $\cL$ with $\cL \leq \sln_{\kappa \omega}(\wedge^\infty, \exists^\infty, \forall^\infty)$ for some $n$ and some $\kappa$  has a ULST number.
			\item[(5)] Every (possibly class-sized)  logic $\cL$ with $\cL \leq \sln_{\kappa \omega}(\wedge^\infty, \exists^\infty, \forall^\infty)$ for some $n$ and some $\kappa$  has an SULST number.
		\end{enumerate}
	\end{corollary}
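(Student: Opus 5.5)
We derive the corollary from Fact~\ref{fact:VP}(i) together with the preceding theorem characterising $C^{(n)}$-extendibility via $\sln_{\kappa \omega}(\wedge^\infty, \exists^\infty,\forall^\infty)$. We read $\cL \leq \cL'$ in the usual sense: every $\cL$-sentence over $\tau$ is equivalent to some $\cL'$-sentence over $\tau$. The forward direction covers all five schemas at once. The backward direction only needs the weakest schema, since the others imply it.

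\textbf{$\VP$ implies (1)--(5).} Fix $n$, a regular $\kappa$, and a logic $\cL \leq \sln_{\kappa\omega}(\wedge^\infty, \exists^\infty,\forall^\infty)$. By Fact~\ref{fact:VP}(i), $\VP$ gives a proper class of $C^{(n)}$-extendible cardinals. Indeed, applying (i) to larger $m$ and using that $C^{(m)}$-extendibles form a class unbounded under $\VP$, or simply relativising Bagaria's argument above $\kappa$, we may choose a $C^{(n)}$-extendible $\mu > \kappa$. The argument for the preceding theorem (direction (1)$\Rightarrow$(3)) uses an embedding $j: V_\alpha \to V_\beta$ with $\alpha,\beta \in C^{(n)}$ and $\crit(j) = \mu$. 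This $j$ fixes every formula of $\sln_{\kappa\omega}$ up to renaming, because these formulas have rank below $\mu$. So the argument shows that $\mu$ is a compactness number of $\sln_{\kappa\omega}(\wedge^\infty,\exists^\infty,\forall^\infty)$. Note that the theorem is stated for $\sln_{\mu\omega}$, which contains $\sln_{\kappa\omega}$ when $\kappa \le \mu$, so it already suffices directly.

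Compactness numbers transfer down along $\leq$. Take a ${<}\mu$-satisfiable $T\subseteq \cL$ and replace each sentence by an equivalent $\sln_{\kappa\omega}(\ldots)$-sentence, using choice over a set-sized $T$. The resulting theory has the same models, so it is ${<}\mu$-satisfiable and therefore satisfiable. The weak compactness, Hanf, ULST and SULST numbers transfer in the same way, and each is bounded by the compactness number, as in the corollary after Theorem~\ref{thm:compclassFOL}. This gives (1)--(5).

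\textbf{Each schema implies $\VP$.} For (1), (2), (4), (5), first note which schemas imply which. (1) implies (2) and (4). (5) implies (4), and (4) implies (3). So it suffices to derive $\VP$ from (2) and from (3). For each $n$, apply the schema to $\cL = \sln_{\omega\omega}(\wedge^\infty,\exists^\infty,\forall^\infty)$. The arguments (4)$\Rightarrow$(1) and (5)$\Rightarrow$(1) of Theorem~\ref{thm:classSOL}, with Magidor's $\Phi$ replaced by $\Phi^{(n)}$ from Fact~\ref{factSL}, produce the following. For unboundedly many $\alpha \in C^{(n)}$ there are embeddings $j: V_\alpha\to V_\delta$ with $\delta \in C^{(n)}$, whose critical points are bounded by the given number. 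A pigeonhole or class-Fodor argument then yields a single cardinal that is $C^{(n)}$-extendible. Fact~\ref{fact:VP}(i) then gives $\VP$.

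\textbf{Main obstacle.} The delicate step is this last one. In the Hanf case we need $\alpha \in C^{(n)}$ of cofinality $\omega$ to form a stationary class, so that the very weak class Fodor principle applies. This holds because $C^{(n)}$ is club. We also need the resulting critical point to be $C^{(n)}$-extendible using the equivalent definition without $j(\kappa)>\alpha$, given in the footnote. Unlike the proof of Theorem~\ref{thm:classSOL}, no minimality argument is needed, since the corollary only asserts existence.
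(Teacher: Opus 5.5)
Your proof is correct and follows the route the paper leaves implicit. The forward direction combines Fact~\ref{fact:VP}(i), with a $C^{(n)}$-extendible $\mu>\kappa$, with the preceding theorem and downward transfer along $\leq$. The backward direction reruns the weak-compactness and Hanf arguments of Theorem~\ref{thm:classSOL} over $\sln(\wedge^\infty,\exists^\infty,\forall^\infty)$, with $\Phi^{(n)}$ replacing Magidor's $\Phi$. That rerun is genuinely needed, since the theorem's statement alone never gives you a $\kappa$ equal to the relevant number of $\sln_{\kappa\omega}(\wedge^\infty,\exists^\infty,\forall^\infty)$.

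There are only cosmetic slips. In the Hanf case the critical points are bounded by $\alpha$, not by the Hanf number, which is why Fodor rather than pigeonhole is needed; you do use Fodor there, but your text says otherwise. Passing to a single $C^{(n)}$-extendible also uses that $j\upharpoonright V_{\alpha'}$ lands in $V_{j(\alpha')}$ with $j(\alpha')\in C^{(n)}$, which holds because $\alpha,\delta\in C^{(n)}$. Finally, a $C^{(n)}$-extendible above $\kappa$ is most cleanly obtained as $j(\mu)$ for a $C^{(n+2)}$-extendible $\mu$, since $C^{(n)}$-extendibility is $\Pi_{n+2}$; this is better than your somewhat circular appeal to unboundedness.
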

	\section{Weak Vopěnka's Principle and L\"o\-wen\-heim-Sko\-lem properties}\label{sec:ClassWVP}
	Both $\VP$ and $\WVP$ are characterized in terms of the existence of large cardinals, namely of $C^{(n)}$-extendible and of $\Pi_n$-strong cardinals, respectively. Moreover, with increasing $n$, the $C^{(n)}$-extendible and $\Pi_n$-strong cardinals form respective hierarchies of increasing consistency strength below $\VP$ and $\WVP$. There is thus a strong analogy of $\VP$ and $\WVP$ in terms of their characterizations by large cardinals. $\VP$ is also equivalent to the existence of compactness and of LST numbers for every set-sized logic. This begs the question whether $\WVP$ also has a characterization in terms of model-theoretic properties of logics. We will show that this is indeed the case. More concretely, we will show that certain downwards Löwenheim-Skolem properties for the class version $\sln(\vee^\infty, \forall^\infty)$ of sort logic are equivalent to the existence of $\Pi_n$-strong cardinals. As $\WVP$ is equivalent to the existence of $\Pi_n$-strong cardinals for every $n$, this results in a characterization of $\WVP$.
	
	Boney and Osinski \cite{boney2025} and Holy, Lücke, and Müller \cite{holy2024} also give independent model-theoretic characterizations of $\WVP$. Both approaches use compactness properties though. As $\VP$ is characterized by both compactness and downwards Löwenheim-Skolem principles, our analysis pushes the model-theoretic analogy of $\VP$ and $\WVP$ further by providing an equivalence to the latter type of properties. The analogy is also pushed in a different sense, as, next to giving an equivalence to the global WVP, we characterize the hierarchy of $\Pi_n$-strong cardinals below $\WVP$, similarly to how the $C^{(n)}$-extendible cardinals can be characterized model-theoretically (cf. Section \ref{subsec:SOLSL}).

	\subsection{Statement of the main results}\label{subsec:WVPLS}
	Recall that if $\LST(\cL) = \kappa$ for some logic $\cL$, we demand that for any $\varphi \in \cL$ and any $\mathcal A \models \varphi$, there is some substructure $\mathcal B \models \varphi$ such that $|B| < \kappa$, \emph{provided} $\mathcal A$ is a $\tau$-structure for some vocabulary $\tau$ of size $|\tau|<\kappa$. For the LS number, we usually do not need such a restriction in the size of vocabularies, as in usual definitions of set-sized strong logics -- for instance, the one we employed -- the amount of non-logical symbols that a sentence can use is bounded anyway. But for class logics this is not the case. 
	
	We therefore consider the following version of the LS number.
	\begin{definition}
		Let $\mathcal L$ be a logic and $\lambda$ a cardinal. A cardinal $\kappa$ is an  \emph{$\text{LS}^\lambda$ number of $\mathcal L$} iff any satisfiable sentence of $\cL$ over a vocabulary of size $< \lambda$ has a model of size $< \kappa$. Should such a cardinal exist, we denote the smallest such by $\text{LS}^\lambda(\mathcal L)$. 
	\end{definition}
	We first mention the following unpublished result by Wilson. We omit the proof, as it can be carried out with similar arguments as that of Theorem \ref{thm:LS-Pin-strong}.
	\begin{theorem}[Wilson]
		The following are equivalent for a cardinal $\kappa$:
		\begin{enumerate}
			\item[(1)] $\kappa$ is the smallest strong cardinal.
			\item[(2)] $\kappa = \text{LS}^\omega(\cL^2(\vee^\infty, \forall^\infty))$. 
			\item[(3)] $\kappa$ is the smallest cardinal such that $\kappa = \text{LS}^\kappa(\cL^2(\vee^\infty, \forall^\infty))$.
		\end{enumerate}
	\end{theorem}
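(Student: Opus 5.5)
The plan is to prove two directions: every strong cardinal $\kappa$ is an $\text{LS}^\kappa$ number of $\cL^2(\vee^\infty, \forall^\infty)$, and any $\text{LS}^\omega$ number $\kappa$ bounds a strong cardinal $\leq \kappa$. Since $\text{LS}^\omega \leq \text{LS}^\kappa$ trivially when both exist, these two facts together with minimality give (1)$\Leftrightarrow$(2)$\Leftrightarrow$(3).

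\textbf{Strong $\Rightarrow$ LS.} Let $\kappa$ be strong and let $\varphi \in \cL^2(\vee^\infty, \forall^\infty)[\tau]$ with $|\tau|<\kappa$ have a model $\mathcal A$. Choose $\lambda$ large, with $\varphi, \mathcal A \in V_\lambda$, such that $V_\lambda$ computes satisfaction of $\varphi$ in $\mathcal A$ correctly. Take $j: V \to M$ with $\crit(j)=\kappa$, $j(\kappa)>\lambda$ and $V_\lambda \subseteq M$. Up to renaming we may assume $\tau \in V_\kappa$, so $j(\tau)=\tau$.

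It then suffices to show that $M \models$ ``$j(\varphi)$ has a model of size $<j(\kappa)$''; elementarity pulls this back to a model of $\varphi$ of size $<\kappa$ in $V$. The candidate model is $\mathcal A$ itself, which lies in $M$ and has size $<j(\kappa)$. What we need is $M \models$ ``$\mathcal A \models j(\varphi)$''. This is the dual of the Claim in Theorem \ref{thm:classSOL}: we prove by induction on negation-normal-form $\varphi$ that $\mathcal A \models \varphi[f]$ implies $M \models$ ``$\mathcal A \models j(\varphi)[g]$'' for every $g$ with $g \circ j = f$.

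\begin{itemize}
\item For big disjunctions, $j(\bigvee_{i<\gamma}\chi_i)$ contains $j(\chi_i)$ as the $j(i)$-th disjunct, so one true disjunct suffices.
\item For $\forall j(W)$, given a $j(W)$-variant $h$ of $g$ in $M$, restrict along $j$ to obtain a $W$-variant of $f$ and apply the induction hypothesis.
\item Finitary $\wedge$ and $\exists$ are routine.
\item Second-order quantifiers are evaluated correctly because $\mathcal P(A^n) \subseteq V_\lambda \subseteq M$.
\end{itemize}

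Infinitary $\wedge$ and $\exists$ are exactly the connectives the logic excludes, and they are the ones that would break this argument.

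\textbf{LS $\Rightarrow$ strong.} Assume $\kappa = \text{LS}^\omega$. For each large $\lambda$ we build a single sentence over the finite vocabulary $\{\in\}$ plus finitely many constants. It asserts Magidor's $\Phi$ (Fact \ref{fact:MagPhi}), so the model is some $V_\delta$, together with the following. There is an elementary-in-the-relevant-sense embedding $k$ from that $V_\delta$ into $V_\lambda$, with critical point and with $V_{k(\text{crit})}$ covering enough. In the universe direction, $\forall^\infty$ and $\vee^\infty$ can say ``every element of the model equals one of the named sets $a \in V_\lambda$'', for instance $\forall x \bigvee_{a\in V_\lambda}(\ldots)$ via coded diagrams, which pins down models.

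The intended use is as follows. The real $V_\lambda$, or a structure encoding it, satisfies the sentence. A model of size $<\kappa$ then yields a small $V_{\bar\lambda}$ with an embedding $k: V_{\bar\lambda} \to V_\lambda$, in the spirit of Magidor's supercompactness characterization. From $k$ we derive an extender witnessing $\lambda'$-strength for $\crit(k) \le \kappa$. A pigeonhole over $\lambda$, as in Theorem \ref{thm:classSOL}, then fixes a strong cardinal $\le\kappa$, and a $C^{(n)}$-reflection argument like the Hanf case gives minimality.

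The main obstacle is expressing this in $\cL^2(\vee^\infty,\forall^\infty)$ alone: without big $\wedge$ and $\exists$ one cannot state a full elementary diagram. The first attempt would be to negate such statements, expressing ``no small model omits'' via universal quantification over all assignments followed by big disjunctions. We must also make sure that strongness, rather than supercompactness, is exactly what the resulting extender embeddings provide.
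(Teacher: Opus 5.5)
Your half ``strong $\Rightarrow$ $\text{LS}^\kappa$ number'' is sound. It is exactly the argument the paper intends: the proof of Claim \ref{ThatClaim} in Theorem \ref{thm:LS-Pin-strong}, with absoluteness of finitary $\cL^2$ (via $\mathcal P(A^n)\subseteq V_\lambda\subseteq M$) in place of $C^{(n)}$-correctness. Your reduction of the three-way equivalence to the two directions is also fine, and no reflection argument is needed for minimality.

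The converse has a genuine gap. You never produce the sentence, and the template you sketch points the wrong way. A sentence of $\cL^2(\vee^\infty,\forall^\infty)$ over the finite vocabulary $\{\in\}$ is evaluated inside the model. It can relate the model to a fixed external set $Z$ only through blocks of \emph{universally} quantified variables, and big disjunctions, indexed by $Z$. So what it can say is that \emph{no} map from $Z$ into the model (or into its power sets) satisfies a given set-sized conjunction of conditions. A map from the small model into $V_\lambda$, which is what your $k\colon V_{\bar\lambda}\to V_\lambda$ is, cannot be asserted this way. Moreover, $\Phi$ makes the small model just some $V_\alpha$ with $\alpha<\kappa$, sitting trivially inside $V_\lambda$, so there is nothing nontrivial to extract from it. (Small-into-large embeddings are Magidor's template for supercompactness, which needs an LST-type property as in Theorem \ref{thm:WilsonClass}(i), not a bare LS number.) The closing remark about ``universal quantification over all assignments followed by big disjunctions'' gestures at negation, but does not say what to quantify over or why a small model yields a contradiction.

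The missing idea, which the paper uses, is contraposition: make the small models the \emph{trivial} case, with maps going from a fixed external object into the model. Let $\psi=\exists(X_A\colon A\in\mathcal P(V_\kappa^{<\omega}))\bigwedge_{i=1}^{6}\psi_i$ be as in Claim \ref{Claim:hom}, dropping the $C^{(n)}$ clause $\psi_7$. Then $\psi\in\cL^2(\wedge^\infty,\exists^\infty)[\{\in\}]$, and for limit $\alpha$, $V_\alpha\models\psi$ iff there is a homomorphism of $\mathscr P$-structures $\mathscr P_{V_\kappa}\to\mathscr P_{V_\alpha}$. Homomorphism conditions are positive, so no elementary diagram is needed, and pushing the negation inward puts $\neg\psi$ into $\cL^2(\vee^\infty,\forall^\infty)$.

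Now argue as follows. If there were no homomorphism into $\mathscr P_{V_\lambda}$, then $V_\lambda\models\neg\psi\wedge\Phi$. The $\text{LS}^\omega$ property then yields some $V_\alpha$ with $\alpha<\kappa$ satisfying $\neg\psi$. This contradicts the restriction homomorphism of Fact \ref{fact:trivialHom}. Hence a homomorphism $\mathscr P_{V_\kappa}\to\mathscr P_{V_\lambda}$ exists, and Fact \ref{fact:p-str} turns it into $j\colon V\to M$ with $V_\lambda\subseteq j(V_\kappa)\subseteq M$. Thus $j(\kappa)\geq\lambda>\kappa$, so $\crit(j)\leq\kappa$ and $\crit(j)$ is $\lambda$-strong. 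Pigeonholing over $\lambda$ gives a strong cardinal $\leq\kappa$.

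Wilson's Fact \ref{fact:p-str}, which upgrades a mere homomorphism to an elementary embedding of $V$ with $V_\lambda\subseteq M$, is what resolves your worry about elementary diagrams. It also ensures the outcome is strongness rather than anything supercompactness-like.
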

	We will show the following theorem that connects LS numbers of class logics of the form $\sln(\vee^\infty, \forall^\infty)$ to local forms of ``Ord is Woodin".
	\begin{theorem}\label{thm:LS-Pin-strong}
		The following are equivalent for every natural number $n \geq 1$ and cardinal $\kappa$:
		\begin{enumerate}
			\item[(1)] $\kappa$ is the smallest $\Pi_n$-strong cardinal.
			\item[(2)] $\kappa$ is the smallest $C^{(n)}$-strong cardinal, i.e., the smallest $A$-strong cardinal with $A = C^{(n)}$.\footnote{Note that the term \emph{$C^{(n)}$-strong cardinal} is also used to refer to a strong cardinal $\kappa$ that has the property that the embeddings $j$ witnessing strength can be demanded to fulfil $j(\kappa) \in C^{(n)}$. It is known that being $C^{(n)}$-strong in this sense is equivalent to merely being strong (cf. \cite[Proposition 1.2]{bag2012}). In the following we will always mean that $\kappa$ is $A$-strong with $A = C^{(n)}$ when speaking of a \emph{$C^{(n)}$-strong cardinal}.}
			\item[(3)] $\kappa = \text{LS}^\omega(\sln(\vee^\infty, \forall^\infty))$. 
			\item[(4)] $\kappa$ is the smallest cardinal such that $\kappa$ is an $\text{LS}^\kappa$ number of $\sln(\vee^\infty, \forall^\infty)$.
			\item[(5)] $\kappa$ is the smallest cardinal such that $\kappa = \text{LS}^\kappa(\sln(\vee^\infty, \forall^\infty))$.
		\end{enumerate}
		\begin{proof}
			Cf. Proof \ref{LS-Pin-strong}.
		\end{proof}
	\end{theorem}
	
	In particular, by Fact \ref{fact:PinWVP}, the above implies that we get a local equivalence of the existence of $\text{LS}^\omega(\sln(\vee^\infty, \forall^\infty))$ to fragments of $\WVP$.
	\begin{corollary}
		The following are equivalent:
		\begin{enumerate}
			\item[(1)] $\WVP(\Pi_n)$.
			\item[(2)] $\sln(\vee^\infty, \forall^\infty)$ has an $\text{LS}^\omega$ number.
		\end{enumerate}
	\end{corollary}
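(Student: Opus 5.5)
The corollary follows by combining Theorem \ref{thm:LS-Pin-strong} with Fact \ref{fact:PinWVP}; no new model theory is needed. Throughout, $n \geq 1$ is fixed, as in both of those results. The one point needing care is that Theorem \ref{thm:LS-Pin-strong} is phrased as an equivalence about a given cardinal $\kappa$. Each direction below therefore first produces a concrete cardinal to which the theorem can be applied, namely the least cardinal with the relevant property.

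For (1) $\Rightarrow$ (2), assume $\WVP(\Pi_n)$. By Fact \ref{fact:PinWVP}, (1) $\Rightarrow$ (2), there is a $\Pi_n$-strong cardinal. Since the cardinals are well-ordered, there is a smallest one; call it $\kappa$. The implication (1) $\Rightarrow$ (3) of Theorem \ref{thm:LS-Pin-strong} then gives $\kappa = \text{LS}^\omega(\sln(\vee^\infty, \forall^\infty))$. In particular $\sln(\vee^\infty, \forall^\infty)$ has an $\text{LS}^\omega$ number.

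For (2) $\Rightarrow$ (1), assume some cardinal is an $\text{LS}^\omega$ number of $\sln(\vee^\infty, \forall^\infty)$. Then there is a least such cardinal $\kappa$, so by definition $\kappa = \text{LS}^\omega(\sln(\vee^\infty, \forall^\infty))$. The implication (3) $\Rightarrow$ (1) of Theorem \ref{thm:LS-Pin-strong} shows that $\kappa$ is the smallest $\Pi_n$-strong cardinal, so a $\Pi_n$-strong cardinal exists. Fact \ref{fact:PinWVP}, (2) $\Rightarrow$ (1), now yields $\WVP(\Pi_n)$.

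There is no real obstacle, since all the work is contained in Theorem \ref{thm:LS-Pin-strong}. The only thing to check is the bookkeeping that passing to least elements is legitimate. It is: "$\kappa$ is a $\Pi_n$-strong cardinal" and "$\kappa$ is an $\text{LS}^\omega$ number of $\sln(\vee^\infty, \forall^\infty)$" are both first-order properties of $\kappa$, the latter using the definable satisfaction relation of the logic. Hence, whenever some cardinal has either property, there is a least cardinal with it.
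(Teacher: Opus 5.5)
Your proposal is correct and follows the paper's own route: the paper obtains this corollary directly by combining the equivalence (1)$\Leftrightarrow$(3) of Theorem \ref{thm:LS-Pin-strong} with Fact \ref{fact:PinWVP}, exactly as you do. Your extra step of passing to the least $\Pi_n$-strong cardinal, respectively the least $\text{LS}^\omega$ number, is an appropriate detail that the paper leaves implicit.
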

	As for the case of compactness numbers of $\cL^2(\wedge^\infty, \exists^\infty, \forall^\infty)$, by minor adaptations to the proof of Theorem \ref{thm:LS-Pin-strong}, we obtain the following result. 
	\begin{theorem}\label{thm:LS-Pin-strong-exact}
		The following are equivalent for a cardinal $\kappa$:
		\begin{enumerate}
			\item[(1)] $\kappa$ is $\Pi_n$-strong.
			\item[(2)] $\kappa = \text{LS}^\kappa(\sln_{\kappa \omega}(\vee^\infty, \forall^\infty))$. 
		\end{enumerate}
	\end{theorem}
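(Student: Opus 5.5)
We prove both directions by adapting the argument for Theorem \ref{thm:LS-Pin-strong}, with the $\cL_{\kappa\omega}$-part of $\sln_{\kappa\omega}$ used to pin critical points to exactly $\kappa$. Throughout we use the fact, implicit in Theorem \ref{thm:LS-Pin-strong}, that for $\Pi_n$-strength we may equivalently work with $C^{(n)}$-strength. That is, $\kappa$ is $\Pi_n$-strong iff for every $\lambda$ there is $j\colon V\to M$ with $\crit(j)=\kappa$, $j(\kappa)>\lambda$, $V_\lambda\subseteq M$ and $C^{(n)}\cap\lambda\subseteq j(C^{(n)})$. This holds because $C^{(n)}$ is $\Pi_n$-definable and every $\Pi_n$ class is absolute between $V_\alpha$ and $V$ for $\alpha\in C^{(n)}$. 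We may further replace $j$ by a $(\kappa,\lambda')$-extender ultrapower, so that it is witnessed inside some $V_\theta$ with $\theta\in C^{(n)}$.

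\textbf{(1) $\Rightarrow$ (2).} Let $\varphi\in\sln_{\kappa\omega}(\vee^\infty,\forall^\infty)[\tau]$ with $|\tau|<\kappa$, and let $\mathcal A\models\varphi$. Choose $\alpha\in C^{(n+1)}$ large enough that $\mathcal A,\varphi\in V_\alpha$. By Fact \ref{factSL}(2), $V_\alpha$ computes the sort-quantifier subformulas correctly; these are the only places where genuine sort logic occurs, and the infinitary connectives sit outside them. Take $j\colon V\to M$ witnessing $C^{(n)}$-strength with $V_\alpha\subseteq M$, $j(\kappa)>\alpha$ and $C^{(n)}\cap\alpha\subseteq j(C^{(n)})$. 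In $M$ the structure $\mathcal A$ has size $<j(\kappa)$. We claim $M\models$ ``$\mathcal A\models j(\varphi)$ up to the renaming $j\upharpoonright\tau$''; since $|\tau|<\kappa$, $j$ fixes $\tau$ pointwise. By elementarity, $V$ then contains a model of $\varphi$ of size $<\kappa$.

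We prove the claim by induction on $\varphi$, in the opposite direction to the Claim in Theorem \ref{thm:classSOL}: we show that $\mathcal A\models\chi[f]$ implies $M\models$ ``$\mathcal A\models j(\chi)[f']$'' for suitable assignments $f'$. The cases go as follows. A disjunction $\bigvee_{i<\gamma}\chi_i$ true in $\mathcal A$ has a true disjunct $\chi_i$, and $j(\chi_i)$ is a disjunct of $j(\varphi)$. For a quantifier $\forall W$, an arbitrary $j(W)$-variant in $M$ pulls back to a $W$-variant; this is the mirror image of the $\exists$-step of the earlier Claim. The base case covers $\sln_{\kappa\omega}$-formulas, which $j$ fixes up to renaming because $\crit(j)=\kappa$. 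For these we need truth in $V$ to transfer to $M$, and we obtain it by comparing $V_\alpha$ with $V^M_{\beta}$ for some $\beta\in j(C^{(n)})\cap C^{(n)}$ above $\alpha$, together with Fact \ref{factSL}(2) applied in both $V$ and $M$.

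\textbf{(2) $\Rightarrow$ (1).} Fix $\lambda$ and pick $\alpha\in C^{(n)}$ with $\alpha>\lambda$. We build a sentence $\psi$ over a vocabulary of size $<\kappa$, say $\{\in\}$ with constants $c_\xi$ for $\xi<\kappa$ and one further constant $d$. Its conjuncts are:
\begin{itemize}
\item $\Phi^{(n)}$, so that every model is some $V_\beta$ with $\beta\in C^{(n)}$;
\item $\cL_{\kappa\omega}$-clauses forcing $c_\xi=\xi$ for $\xi<\kappa$;
\item a statement that $d$ is a cardinal above every $c_\xi$;
\item the negative diagram of $V_\alpha$, expressed as
\[
\forall(x_a\colon a\in V_\alpha)\ \bigvee\bigl\{\neg\theta(x_{\vec a})\colon V_\alpha\models\neg\theta(\vec a)\bigr\}\vee\ldots
\]
which says that there is no embedding of $V_\alpha$ of a certain kind.
\end{itemize}
The intended reading of $\psi$ is ``there is no suitable $C^{(n)}$-preserving extender embedding with critical point $\kappa$ reaching $\lambda$''. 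If $\kappa$ failed to be $\Pi_n$-strong, the failure would be witnessed by a large $V_\beta$ satisfying $\psi$, and hence, by (2), by a small model of size $<\kappa$. Reflecting down in the usual Magidor-style way, this produces a contradiction with the existence of an embedding $V_{\bar\alpha}\to V_\alpha$ with critical point $\bar\kappa$ sending $\bar\kappa\mapsto\kappa$; the $c_\xi$ force $\bar\kappa=\kappa$, which yields the required strength.

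\textbf{Main obstacle.} The hardest step is getting the polarity right in (2) $\Rightarrow$ (1). The logic has only $\vee^\infty,\forall^\infty$, so it can express ``every $V_\alpha$-indexed tuple fails some diagram formula'', which is a non-existence statement. Hence the argument must be run as a contradiction: we define, inside $V_\alpha$, the structure encoding the least failure of strength, and then apply the LS number to obtain a small elementary copy. I would follow the structure of the proof of Theorem \ref{thm:LS-Pin-strong}, changing only the use of the constants $c_\xi$ so that the critical point is exactly $\kappa$.
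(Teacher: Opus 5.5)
\textbf{Direction (1)$\Rightarrow$(2).} This is essentially the paper's argument: Claim~\ref{ThatClaim} and its inductions, now using $\crit(j)=\kappa$ so that $j$ fixes $\sln_{\kappa\omega}$-subformulas up to renaming of variables. Two repairs are needed.
\begin{itemize}
\item In the base case you should not pass to some $V^M_\beta$ with $\beta>\alpha$, since only $V_\alpha\subseteq M$ is guaranteed. Instead, $\alpha$ is a limit of $C^{(n)}$ and $C^{(n)}\cap\alpha\subseteq (C^{(n)})^M$, so $\alpha\in (C^{(n)})^M$. Hence $V_\alpha=V^M_\alpha$ computes $\sln_{\kappa\omega}$-satisfaction correctly for both $V$ and $M$.
\item Statement (2) says $\kappa$ is the \emph{least} $\text{LS}^\kappa$ number. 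So you must also show that no $\delta<\kappa$ is one, e.g.\ via $\bigwedge_{\xi<\eta<\delta}c_\xi\neq c_\eta$ over $\delta$ many constants.
\end{itemize}

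\textbf{Direction (2)$\Rightarrow$(1).} This is where the genuine gap lies.
\begin{itemize}
\item Your $\psi$ is not an admissible instance of the hypothesis. The constants $c_\xi$, $\xi<\kappa$, give a vocabulary of size $\kappa$, which is outside the scope of $\text{LS}^\kappa$. The clauses fixing all $c_\xi$ and putting $d$ above them form a positive conjunction of length $\kappa$, which $\sln_{\kappa\omega}(\vee^\infty,\forall^\infty)$ does not have. This cannot be patched: if the logic could force models of size $\geq\kappa$ over a small vocabulary, (2) would simply be false.
\item The polarity is backwards. ``No embedding of $V_\alpha$ into the model'' holds vacuously in every $V_\gamma$ with $\gamma<\alpha$, for cardinality reasons. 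So a small model of it contradicts nothing.
\item An $\text{LS}$ number yields only \emph{some} small model, not a small elementary substructure. So there is no Magidor-style collapse to an embedding $V_{\bar\alpha}\to V_\alpha$; that is the $\LST$/supercompactness argument.
\item An embedding sending $\bar\kappa\mapsto\kappa$ would not witness strength at $\kappa$ anyway. Moreover, $\bar\kappa=\kappa$ is incompatible with $\crit=\bar\kappa$ together with $\bar\kappa\mapsto\kappa$.
\end{itemize}

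\textbf{The missing ingredient: Wilson's $\mathscr P$-structures.} The non-existence statement must be about maps \emph{out of} a fixed object inside $V_\kappa$ \emph{into} the model, so that every small model $V_\gamma$ ($\gamma<\kappa$) trivially has one.
\begin{itemize}
\item The paper takes $\psi=\exists(X_A\colon A\in\mathcal P(V_\kappa^{<\omega}))\bigwedge_i\psi_i$ over $\{\in\}$ alone. It says that $A\mapsto X_A$ is a homomorphism of pointed $\mathscr P$-structures $\mathscr P_{V_\kappa,C^{(n)}\cap V_\kappa}\to\mathscr P_{V_\beta,C^{(n)}\cap V_\beta}$.
\item For the exact version one adds, as in Theorem~\ref{thm:ShelahCardinals}, the conjunct $\bigwedge_{\xi<\kappa}\forall x\,(X_\xi(x)\leftrightarrow\bigvee_{\eta<\xi}\sigma_\eta(x))$, where the $\sigma_\eta$ are $\cL_{\kappa\omega}$-definitions of ordinals.
\item After negation the $\kappa$-ary conjunction becomes a disjunction, so $\neg\psi\in\sln_{\kappa\omega}(\vee^\infty,\forall^\infty)[\{\in\}]$.
\item Let $\lambda$ be a limit of $C^{(n)}$. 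If $V_\lambda$ had no such homomorphism, then $V_\lambda\models\neg\psi\wedge\Phi$. By (2), some $V_\gamma$ with $\gamma<\kappa$ would satisfy $\neg\psi$. This contradicts the fact that $A\mapsto A\cap V_\gamma^{<\omega}$ is such a homomorphism (Fact~\ref{fact:trivialHom}).
\item Fact~\ref{fact:p-str} then gives $j\colon V\to M$ with $V_\lambda\subseteq M$ and $j(C^{(n)}\cap V_\kappa)\cap V_\lambda=C^{(n)}\cap V_\lambda$, hence $j(\kappa)\geq\lambda$.
\item The added conjunct forces $j(\xi)=\xi$ for all $\xi<\kappa$. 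So $\crit(j)=\kappa$, and $\kappa$ is $C^{(n)}$-strong, equivalently $\Pi_n$-strong.
\end{itemize}
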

	Combining this with Fact \ref{fact:BagariaWilson}, we get a characterisation of $\WVP$ in terms of Löwenheim-Skolem properties.
	\begin{corollary}\label{cor:WVPLS}
		The following are equivalent.
		\begin{enumerate}
			\item[(1)] WVP.
			\item[(2)] For every $n$, $\sln(\vee^\infty, \forall^\infty)$ has an $\text{LS}^{\omega}$ number.
			\item[(3)] Every (possibly class-sized) logic $\cL$ with $\cL \leq \sln_{\kappa \omega}(\vee^\infty, \forall^\infty)$ for some $n$ and some $\kappa$ has an $\text{LS}^\omega$ number.
			\item[(4)] Every (possibly class-sized) logic $\cL$ with $\cL \leq \sln_{\kappa \omega}(\vee^\infty, \forall^\infty)$ for some $n$ and some $\kappa$ has an $\text{LS}^\lambda$ number for every $\lambda$.  
		\end{enumerate}
		\begin{proof}
			Clearly (4) implies (3) and (3) implies (2). By Fact \ref{fact:BagariaWilson}, $\WVP$ is equivalent to the existence of a $\Pi_n$-strong cardinal for every $n$, and of a proper class of $\Pi_n$-strong cardinals for every $n$. In particular, (2) implies (1) by Theorem \ref{thm:LS-Pin-strong}. So left to show is that (1) implies (4). Assume (1) and take any cardinal $\lambda$ and a logic $\cL \leq \sln_{\kappa \omega}(\vee^\infty, \forall^\infty)$ for some $\kappa$. By $\WVP$, we get a $\Pi_n$-strong cardinal $\gamma > \kappa + \lambda$. By Theorem \ref{thm:LS-Pin-strong-exact}, $\gamma = \text{LS}^\gamma(\sln_{\gamma \omega}(\vee^\infty, \forall^\infty))$. Then $\text{LS}^\lambda(\cL) \leq \text{LS}^\lambda(\sln_{\kappa \omega}(\vee^\infty, \forall^\infty)) \leq \text{LS}^\gamma (\sln_{\gamma \omega}(\vee^\infty, \forall^\infty))$. 
		\end{proof}
	\end{corollary}

	\subsection{Proof of the main theorem}\label{sec:ProofLS-Pin-strong}
	In this section, we give a proof of Theorem \ref{thm:LS-Pin-strong}. We will use the framework of \emph{$\mathscr{P}$-structures}, which we will present first. It constitutes an alternative approach to extenders and similarly allows to approximate elementary embeddings of the universe. 
	
	\subsubsection{$\mathscr{P}$-structures as an alternative to extenders}
	The technicalities of the framework presented below are due to Wilson \cite{wilson2022large}, and similar constructions can already be found in \cite{neeman2004mitchell} and \cite{zeman2002}. Instead of sequences of ultrafilters as for extenders, this approach constructs elementary embeddings from homomorphism of certain Boolean algebras called \emph{$\mathscr P$-structures}. For a set $X$, we write $X^{< \omega}$ for the set \[\{(a_1, \dots, a_k) \colon k \in \omega \text{ and } a_1, \dots, a_k \in X\}\] of finite sequences of members of $X$. We write $\subsetneq$ for the proper initial segment relation, i.e., $(a_1, \dots, a_k) \subsetneq (b_1, \dots, b_l)$ iff $k < l$ and $a_i =  b_i$ for all $i \leq k$. We further write $\supsetneq$ for the reverse of the relation $\subsetneq$.
	\begin{definition}[Wilson {\cite{wilson2022large}}]\label{def:p-str}
		Let $X$ be a transitive set. A \emph{$\mathscr P$-structure} is a structure
		\[
		\mathscr P_X = (\mathcal P(X^{< \omega}), \cap, \setminus, X^k, \text{WF},\pi^{-1}_{k,(i_1, \dots, i_j)}, \text{BP}_k)_{j,k < \omega, 1 \leq i_1, \dots, i_j \leq k}.
		\]
		such that 
		\begin{enumerate}
			\item[(1)] $\cap$ is intersection, interpreted as a binary operation.
			\item[(2)] $\setminus$ is complementation, interpreted as a unary operation.
			\item[(3)] $X^k \in \mathcal P(X^{< \omega})$ is a constant.
			\item[(4)] $\text{WF}$ is a unary relation such that $A \in \text{WF}$ iff $A \subseteq P(X^{< \omega})$ and $(A,\supsetneq)$ is well-founded.
			\item[(5)] $\pi^{-1}_{k,(i_1, \dots, i_j)}$ is a function $\mathcal P(X^j) \rightarrow \mathcal P(X^k)$ defined by
			\[
			\pi^{-1}_{k,(i_1, \dots, i_j)}(A) = \{(a_1, \dots, a_k) \in X^k \colon (x_{i_1}, \dots, x_{i_j}) \in A \}.
			\]
			\item[(6)] $\text{BP}_k$ is a function $\mathcal P(X^{k+1}) \rightarrow \mathcal P(X^{k+1})$ defined by
			\[
			\text{BP}_k(A) = \{(a_1, \dots, a_{k+1}) \in X^k \colon \exists z \in x_{k+1}((x_1, \dots, x_k,z) \in A)\}.
			\]
		\end{enumerate}
		If further the structure is of the following form, for an additional constant $c^{< \omega}$ where $c \subseteq X$, we call $\mathcal P_{X,c}$ a \emph{pointed $\mathscr P$-structure}.
		\begin{enumerate}
		\item[(7)] $\mathscr P_{X,c} = (\mathcal P(X^{< \omega}), \cap, \setminus, X^k, \text{WF},\pi^{-1}_{k,(i_1, \dots, i_j)}, \text{BP}_k, c^{< \omega})_{j,k < \omega, 1 \leq i_1, \dots, i_j \leq k}$.
		\end{enumerate}
	\end{definition}
	A \emph{homomorphism of $\mathscr P$-structures} $h: \mathscr P_X \rightarrow \mathscr P_Y$ is simply a homomorphism in the usual model-theoretic sense, i.e., a map preserving the relations, constants and functions defined on $\mathscr P_X$. For instance, $h(A \cap B) = h(A) \cap h(B)$, or $A \in \text{WF}^{\mathscr P_X}$ implies $A \in \text{WF}^{\mathscr P_Y}$. A \emph{homomorphism of pointed $\mathscr P$-structures} $h: \mathscr P_{X,c} \rightarrow \mathscr P_{Y,d}$ additionally preserves the constant $c^{< \omega}$, i.e., $h(c^{< \omega}) = d^{< \omega}$. 
	
	The way in which we will use $\mathscr P$-structures is by the following result, which shows how they give rise to elementary embeddings.
	\begin{fact}[Wilson {\cite{wilson2022large}}]\label{fact:p-str}
		Let $X$ and $Y$ be transitive, $c\subseteq X$, $d \subseteq Y$, and $h: \mathscr P_{X,c} \rightarrow \mathscr P_{Y,d}$ be a homomorphism of pointed $\mathscr P$-structures. Then there is a transitive class $M$ and an elementary embedding $j: V \rightarrow M$ such that $Y \subseteq j(X)$, $h(A) = j(A) \cap Y^{< \omega}$ for all $A \subseteq X^{< \omega}$, and $j(c) \cap Y =  d$. 
	\end{fact}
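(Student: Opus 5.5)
We would prove this by an extender-style ultrapower construction, with the homomorphism $h$ playing the role of the extender. For each $k<\omega$ and each $b\in Y^k$, let
\[
U_b=\{A\subseteq X^k \colon b\in h(A)\}.
\]
Since $h$ preserves $\cap$, $\setminus$ and the constants $X^k$, each $U_b$ is an ultrafilter on $X^k$. Since $h$ commutes with the maps $\pi^{-1}_{k,(i_1,\dots,i_j)}$, these ultrafilters cohere: if $b'$ is the subsequence of $b$ at positions $i_1,\dots,i_j$, then $U_{b'}$ is the projection of $U_b$. We then form the ultrapowers $\mathrm{Ult}(V,U_b)$ and the canonical factor maps between them, take the direct limit $M_h$ along the directed system of finite sequences from $Y$, and let $j\colon V\to M_h$ be the induced map. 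Elementarity follows from Łoś's theorem for each $U_b$ together with the coherence. Concretely, elements of $M_h$ are classes $[b,f]$ with $f\colon X^{|b|}\to V$, and
\[
M_h\models \varphi([b,f]) \iff \{\vec a \colon \varphi(f(\vec a))\}\in U_b
\]
(after first padding the sequences $b$ to a common length).

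The main step is showing that $M_h$ is well-founded, and this is exactly what the relation $\text{WF}$ is for. Suppose $[b_n,f_n]$, $n<\omega$, is an $\in$-descending sequence in $M_h$. Using coherence, we may assume the $b_n$ are increasing under $\subsetneq$, with $b_n$ of length $k_n$. Let $T\subseteq X^{<\omega}$ consist of those finite sequences $\vec a$ of length exactly some $k_n$ such that
\[
f_{m+1}(\vec a\upharpoonright k_{m+1})\in f_m(\vec a\upharpoonright k_m)\quad\text{for all } m<n.
\]
An infinite $\supsetneq$-descending chain through $T$ would produce an infinite $\in$-descending sequence in $V$, which is impossible. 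Hence $T\in\text{WF}$, so $h(T)\in\text{WF}$ because $h$ preserves $\text{WF}$. On the other hand, applying Łoś to each of the finitely many conditions at level $n$ shows $b_n\in h(T)$ for every $n$. Since $b_0\subsetneq b_1\subsetneq\cdots$, the $b_n$ form an infinite descending chain in $h(T)$, a contradiction. Writing out the set $T$ so that it is literally a subset of $X^{<\omega}$ that $h$ can be applied to, and checking the Łoś computation behind $b_n\in h(T)$, is the part we expect to require the most care.

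Once $M_h$ is well-founded we transitively collapse it to a transitive class $M$. It remains to identify the elements $[b,\mathrm{pr}_i]$, where $\mathrm{pr}_i$ is the $i$-th coordinate projection, with the sets $b_i\in Y$. For this we use $\text{BP}_k$. Since $h$ commutes with $\text{BP}_k$, we get: $[b,\mathrm{pr}_k]$ has an element $[b^\frown z',f]$ of the required form iff $b^\frown z\in h(A)$ for some $z\in b_k$, where $A$ is the associated set. Here transitivity of $Y$ ensures $z\in Y$, so that $b^\frown z$ is a sequence from $Y$. It follows that the members of $[b,\mathrm{pr}_k]$ are exactly the objects $[b^\frown z,\mathrm{pr}_{k+1}]$ with $z\in b_k$. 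An $\in$-induction on $y\in Y$ then shows that the collapse sends $[b,\mathrm{pr}_i]$ to $b_i$. Since each $[b,\mathrm{pr}_i]$ belongs to $j(X)$, this gives $Y\subseteq j(X)$.

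With this identification, Łoś gives, for $b\in Y^k$ and $A\subseteq X^k$,
\[
b\in j(A) \iff A\in U_b \iff b\in h(A),
\]
so $h(A)=j(A)\cap Y^{<\omega}$. Finally, $h$ preserves the constant $c^{<\omega}$, that is, $h(c^{<\omega})=d^{<\omega}$. Applying the previous equation to $A=c^{<\omega}$ at length one yields $j(c)\cap Y=d$.
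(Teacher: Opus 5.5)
The paper does not prove this fact; it cites Wilson. Your argument is correct, and it is the derived-extender ultrapower construction that the $\mathscr P$-structure axioms are tailored to: $\cap$, $\setminus$ and $X^k$ give the ultrafilters, $\pi^{-1}_{k,(i_1,\dots,i_j)}$ gives coherence, $\text{WF}$ gives well-foundedness, and $\text{BP}_k$ identifies the seeds with elements of $Y$. This is essentially Wilson's proof.

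The one point you leave implicit is that your $\text{BP}_k$ step relies on $\pi^{-1}_{k,(i_1,\dots,i_j)}$ with repeated indices, in two places:
\begin{itemize}
\item To handle a general element $[b^\frown e,g]\in[b,\mathrm{pr}_k]$, you pull it back from the sequence $b^\frown e^\frown b_k$, so that the bound $b_k$ sits in the last coordinate where $\text{BP}$ can act.
\item To show $[b^\frown z,\mathrm{pr}_{k+1}]\in[b,\mathrm{pr}_k]$ for $z\in b_k$, you need $b^\frown z^\frown z\in h(\{(\vec a,w)\colon w=a_{k+1}\})$, i.e.\ that $h$ contains the diagonal.
\end{itemize}
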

	Let us further note that there is a trivial homomorphism $\mathscr P_X \rightarrow \mathscr P_Y$ if $Y \subseteq X$.
	\begin{fact}[Wilson {\cite{wilson2022large}}]\label{fact:trivialHom}
		Let $X$ and $Y$ be transitive such that $Y \subseteq X$. Then $h: \mathcal P(X^{< \omega}) \rightarrow \mathcal P(Y^{< \omega})$ defined by $A \mapsto A \cap Y^{< \omega}$ is a homomorphism of $\mathscr P$-structures $h: \mathscr P_X \rightarrow \mathscr P_Y$. 
	\end{fact}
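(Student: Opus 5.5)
The plan is to verify directly that $h(A) = A \cap Y^{<\omega}$ commutes with each function of the $\mathscr P$-structure, preserves each constant, and preserves the relation $\text{WF}$. Everything except the bounded-projection clause is pure set algebra. That clause is the only place where transitivity of $Y$ is needed, so I expect it to be the one step requiring care.

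First, the Boolean part. Intersection is immediate: $(A \cap B) \cap Y^{<\omega} = (A \cap Y^{<\omega}) \cap (B \cap Y^{<\omega})$. For complementation, the complement in $\mathscr P_X$ is taken relative to $X^{<\omega}$. Since $Y^{<\omega} \subseteq X^{<\omega}$, we get $(X^{<\omega} \setminus A) \cap Y^{<\omega} = Y^{<\omega} \setminus (A \cap Y^{<\omega})$, which is the complement of $h(A)$ in $\mathscr P_Y$. The constants are preserved because $X^k \cap Y^{<\omega} = Y^k$.

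Next, $\text{WF}$. If $(A, \supsetneq)$ is well-founded, then so is its restriction to the subset $A \cap Y^{<\omega}$, because any descending chain there is already one in $A$. So $h(A) \in \text{WF}^{\mathscr P_Y}$.

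For the projection maps $\pi^{-1}_{k,(i_1,\dots,i_j)}$, fix $(a_1,\dots,a_k) \in Y^k$. The tuple $(a_{i_1},\dots,a_{i_j})$ lies in $Y^j$, so it lies in $A$ if and only if it lies in $A \cap Y^{<\omega}$. Hence
\[
\pi^{-1}_{k,(i_1,\dots,i_j)}(A) \cap Y^{<\omega} = \pi^{-1}_{k,(i_1,\dots,i_j)}(A \cap Y^{<\omega}),
\]
where the right-hand side is computed in $\mathscr P_Y$.

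Finally, $\text{BP}_k$, which is the main obstacle. Fix $(a_1,\dots,a_{k+1}) \in Y^{k+1}$. Suppose there is $z \in a_{k+1}$ with $(a_1,\dots,a_k,z) \in A \cap Y^{<\omega}$. Then the same $z$ witnesses membership in $\text{BP}_k(A)$, which gives the inclusion $\text{BP}_k^{\mathscr P_Y}(h(A)) \subseteq h(\text{BP}_k^{\mathscr P_X}(A))$. For the converse, suppose some $z \in a_{k+1}$ satisfies $(a_1,\dots,a_k,z) \in A$. Since $a_{k+1} \in Y$ and $Y$ is transitive, $z \in Y$. So the witnessing tuple lies in $Y^{k+1}$, hence in $A \cap Y^{<\omega}$. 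This gives equality, and $h$ is a homomorphism $\mathscr P_X \to \mathscr P_Y$.
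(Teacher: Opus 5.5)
Your proof is correct. The paper gives no argument of its own for this fact (it is cited from Wilson), and your clause-by-clause check is the expected one: the only non-trivial input is transitivity of $Y$ in the $\text{BP}_k$ clause, which ensures any witness $z \in a_{k+1}$ for a tuple in $Y^{k+1}$ already lies in $Y$. You also read the paper's definitions in their intended sense, namely $\text{BP}_k(A) \subseteq X^{k+1}$ and $\text{WF}$ applying to subsets $A \subseteq X^{<\omega}$, despite the typos in Definition~\ref{def:p-str}.
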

	
	\subsubsection{Main proof}
	Before we give the main proof, let us mention that, as Wilson \cite{wilson2022large} and Bagaria and Wilson \cite{bagaria2023weak} observe, in the definition of $\Pi_n$-strong and of $A$-strong cardinals, we may equivalently drop the assumption that $j(\kappa) > \lambda$. In the case of $A$-strong cardinals, this then additionally requires to exchange the condition $j(A \cap V_\kappa) \cap V_\lambda = A \cap V_\lambda$ for $j(A \cap V_\lambda) \cap V_\lambda = A \cap V_\lambda$. We will use these observations tacitly below.
	
	\begin{Proof}[Proof of Theorem \ref{thm:LS-Pin-strong}]\label{LS-Pin-strong}
		That the first two items are equivalent is shown (but not explicitly stated) in the proof of \cite[Proposition 5.9]{bagaria2023weak}. We will show the equivalence of (2), (3), (4), and (5). For this, it suffices to show that (3) implies (2), and that (2) implies (4). This holds, because if (4) is true, and $\kappa$ is the smallest cardinal such that $\kappa$ is an $\text{LS}^\kappa$ number of $\sln(\vee^\infty, \forall^\infty)$, then in particular, $\gamma = \text{LS}^\omega(\sln(\vee^\infty, \forall^\infty))$ exists and $\gamma \leq \text{LS}^\kappa(\sln(\vee^\infty, \forall^\infty)) \leq \kappa$. But if (3) implies (2), then this means that $\gamma$ is the smallest $C^{(n)}$-strong cardinal. And if (2) implies (4), then $\gamma$ is the smallest cardinal such that $\gamma$ is an $\text{LS}^\gamma$ number of $\sln(\vee^\infty, \forall^\infty)$. But this implies $\kappa = \gamma$ and so (4) implies (3). This shows the equivalence of (2), (3), and (4). Finally, notice that the above calculation also shows that $\kappa = \text{LS}^\kappa(\sln(\vee^\infty, \forall^\infty))$, and so (5) is equivalent to the other statements. 
		
		\medskip
		
		So let us first show that (3) implies (2). The following two claims suffice for this.
		\begin{claim}\label{ThisClaim}
			If $\kappa=\text{LS}^\omega(\sln(\vee^\infty, \forall^\infty))$, then for all $\lambda > \kappa$ which are limits of $C^{(n)}$, there is a \emph{$\lambda$-$C^{(n)}$-strong cardinal} $\leq \kappa$, i.e., some $\delta \leq \kappa$ such that there is $j: V \to M$ with $\crit(j) = \delta$, $V_\lambda \subseteq M$, and $j(C^{(n)} \cap V_\lambda) \cap V_\lambda = C^{(n)} \cap V_\lambda$.
		\end{claim}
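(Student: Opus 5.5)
We plan to write one sentence $\psi_\lambda$ of $\sln(\vee^\infty,\forall^\infty)$ over the finite vocabulary $\{\in, P\}$ (two sorts if convenient), and apply the $\text{LS}^\omega$ property to it. Its small model will yield a homomorphism of pointed $\mathscr P$-structures. Fact \ref{fact:p-str} then turns that homomorphism into the desired embedding.

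Fix $\lambda>\kappa$ a limit of $C^{(n)}$ and choose $\theta\in C^{(n)}$ with $\theta>\lambda$ large enough that $V_\theta$ is correct about the relevant facts. The sentence $\psi_\lambda$ will say: ``the model is isomorphic to some $(V_\alpha,\in)$ with $\alpha\in C^{(n)}$, via $\Phi^{(n)}$ of Fact \ref{factSL}; and there is a set $X$ in the model such that there is a homomorphism of pointed $\mathscr P$-structures from $\mathscr P_{V_\lambda, C^{(n)}\cap V_\lambda}$ into $\mathscr P_{X, C^{(n)}\cap X}$ whose values are coded in the model.''

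The difficulty is that $\mathscr P_{V_\lambda,\dots}$ has $|\mathcal P(V_\lambda^{<\omega})|$ many elements. The infinitary connectives are therefore needed to pin down the source structure from outside. The permitted connectives are $\bigvee$ and $\forall$ over arbitrary sets, placed outside the sort quantifiers. We will use them to express the \emph{negation} of ``$(V_\lambda,\in,C^{(n)}\cap V_\lambda)$ does \emph{not} map into the model'', in the following way. Let $(A_i:i<\mu)$ enumerate $\mathcal P(V_\lambda^{<\omega})$. We write
$$\forall(y_i:i<\mu)\ \Big[\bigvee_{\text{violated }\mathscr P\text{-relation among the }A_i}\neg(\text{that relation holds of the }y_i)\ \vee\ \chi(\vec y)\Big],$$
where $\chi$ asserts, via finitary sort-logic formulas with parameters $y_i$, that the $y_i$ lie in some $X^{<\omega}$ of rank ${<}$ the critical target. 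Here we rely on the fact that each preservation condition ($\cap,\setminus,\pi^{-1},\text{BP}_k,\text{WF}$, pointedness) is first-order in $V_\alpha$, or $\Sigma_n$-expressible using $\Phi^{(n)}$; WF in particular is absolute to $V_\alpha$ by transitivity.

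Designing this sentence correctly is where we expect the real work. The polarity must come out right: the $\vee^\infty,\forall^\infty$ form must express something of the shape ``every assignment fails, or a homomorphism exists'' that holds in $V_\theta$ and is inherited by the small model. The choice we will try first exploits the trivial homomorphism of Fact \ref{fact:trivialHom} to verify truth in the large model $V_\theta$. We then read off from a small model $V_\alpha$, with $\alpha<\kappa$ and $\alpha\in C^{(n)}$, a homomorphism $h:\mathscr P_{V_\lambda,C^{(n)}\cap V_\lambda}\to\mathscr P_{X,d}$ with $X$ transitive inside $V_\alpha$ and $d=C^{(n)}\cap X$. Since $\alpha\in C^{(n)}$, the model is correct about $C^{(n)}$ below $\alpha$, so $d$ really equals $C^{(n)}\cap X$.

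Once we have $h$, Fact \ref{fact:p-str} gives $j:V\to M$ with $X\subseteq j(V_\lambda)$ and $j(C^{(n)}\cap V_\lambda)\cap X=d$. This is the configuration reversed from the one we need, since $X$ is small and the source is big. So we will instead set things up with the roles swapped: the sentence quantifies over a map from $\mathscr P$ of the small part into $\mathscr P_{V_\lambda}$. Concretely, the model's $X$ must satisfy $V_\lambda\subseteq j(X)$, agreement $j(C^{(n)}\cap X)\cap V_\lambda=C^{(n)}\cap V_\lambda$, and $X\subseteq V_\kappa$. Then $\crit(j)\le\kappa$, because the rank of $X$ is below $\kappa$ while $j(X)\supseteq V_\lambda$. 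Finally $V_\lambda\subseteq M$ and the $C^{(n)}$-agreement follow from the properties of $j$ with $X=V_\delta$. The main obstacle, then, is encoding ``there exists $h$ with $h(A)\supseteq\dots$'' as a sentence in which the big disjunction and universal quantifier over the $\mathcal P(V_\lambda^{<\omega})$-indexed variables carry the homomorphism conditions. We expect this to go through by a Wilson-style diagram argument mirroring the proof of Theorem \ref{thm:classSOL}.
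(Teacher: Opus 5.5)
Your final configuration names the right object. Taking $X=V_\kappa$, a pointed homomorphism $\mathscr P_{V_\kappa,C^{(n)}\cap V_\kappa}\to\mathscr P_{V_\lambda,C^{(n)}\cap V_\lambda}$ is exactly what Fact~\ref{fact:p-str} needs. It yields $V_\lambda\subseteq j(V_\kappa)\subseteq M$, the $C^{(n)}$-agreement, and $j(\kappa)\geq\lambda>\kappa$, hence $\crit(j)\leq\kappa$. But you never obtain this homomorphism, and the mechanism you propose cannot produce it:
\begin{itemize}
\item You want a sentence of $\sln(\vee^\infty,\forall^\infty)$ asserting ``there exists $h$'', and then to read $h$ off from the small model. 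A sentence of the shape $\forall(y_i\colon i<\mu)\bigvee\cdots$ only constrains \emph{all} assignments, so its truth in the small model supplies no witness.
\item ``There exists $h$'' is an $\exists^\infty\wedge^\infty$ statement, the wrong polarity for this logic. So the ``main obstacle'' you name at the end is not a technicality; it is exactly where a different idea is needed.
\item The swapped version is not codable. The second-order variables are indexed by sets fixed in the syntax and range over relations on the model. So the domain cannot be $\mathcal P(X^{<\omega})$ for an $X$ inside the model, and the values cannot be subsets of $V_\lambda^{<\omega}$ while the model is small.
\item Your use of Fact~\ref{fact:trivialHom} is backwards. Restriction maps go from a larger $\mathscr P$-structure to a smaller one, so they serve to \emph{falsify} a sentence in a small model, not to verify one in a large $V_\theta$.
\end{itemize}

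The missing idea is to apply the $\text{LS}^\omega$ property contrapositively. Fix the source externally as $\mathscr P_{V_\kappa,C^{(n)}\cap V_\kappa}$, and let the target be the model itself.
\begin{itemize}
\item Index unary second-order variables $X_A$ by $A\in\mathcal P(V_\kappa^{<\omega})$. The statement ``$A\mapsto X_A$ is a pointed homomorphism into the model's $\mathscr P$-structure'' is a big conjunction of finitary $\sln$-conditions, with pointedness handled via $\Phi^{(n)}$ relativized to $V_x$.
\item So its existential closure $\psi$ lies in $\sln(\wedge^\infty,\exists^\infty)$, and $\neg\psi$, with the negation pushed inward, lies in $\sln(\vee^\infty,\forall^\infty)$.
\item Suppose there were no homomorphism into $\mathscr P_{V_\lambda,C^{(n)}\cap V_\lambda}$. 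Then $V_\lambda$ satisfies $\neg\psi$ together with Magidor's $\Phi$. So some $V_\alpha$ with $|V_\alpha|<\kappa$, hence $\alpha<\kappa$, satisfies $\neg\psi$.
\item But $A\mapsto A\cap V_\alpha^{<\omega}$ is a pointed homomorphism $\mathscr P_{V_\kappa,C^{(n)}\cap V_\kappa}\to\mathscr P_{V_\alpha,C^{(n)}\cap V_\alpha}$ by Fact~\ref{fact:trivialHom}, a contradiction.
\end{itemize}
Choosing $V_\kappa$ as the source is what makes this restriction map available in \emph{every} model of size ${<}\kappa$. No auxiliary $\theta$ is needed.
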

	
		\begin{claim}\label{ThatClaim}
			If $\kappa$ is $C^{(n)}$-strong, then $\kappa$ is an $\text{LS}^\kappa(\sln(\vee^\infty, \forall^\infty))$ number.
		\end{claim}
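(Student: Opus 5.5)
The plan is to reflect a given model into the target model of a $C^{(n)}$-strongness embedding, and then pull the resulting statement ``there is a small model'' back to $V$ by elementarity. Fix a vocabulary $\tau$ with $|\tau|<\kappa$, a sentence $\varphi\in\sln(\vee^\infty,\forall^\infty)[\tau]$ and a model $\mathcal A\models\varphi$. By renaming (Definition \ref{def:abstr:log}(v)) we may assume $\tau\subseteq V_\kappa$. Then every sort-logic subformula $\theta\in\sln$ occurring in $\varphi$ (which is finitary and uses only symbols from $\tau$) can be taken to lie in $V_\kappa$.

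Choose a limit point $\lambda$ of $C^{(n)}$ large enough that $\mathcal A,\varphi\in V_\lambda$. By $C^{(n)}$-strongness, take $j\colon V\to M$ with the following properties:
\begin{itemize}
\item $\crit(j)=\kappa$ and $j(\kappa)>\lambda$;
\item $V_\lambda\subseteq M$;
\item $j(C^{(n)}\cap V_\lambda)\cap V_\lambda=C^{(n)}\cap V_\lambda$.
\end{itemize}
Here we use the version of the definition that keeps $j(\kappa)>\lambda$. Then $j(\tau)=\tau$, $j$ fixes each $\theta$, and $\mathcal A\in M$ has size $<\lambda<j(\kappa)$. So it suffices to show
\[
M\models ``\mathcal A\models j(\varphi)".
\]
Indeed, $M$ then satisfies ``$j(\varphi)$ has a model of size $<j(\kappa)$'', and by elementarity $\varphi$ has a model of size $<\kappa$ in $V$.

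To get this, I prove by induction on subformulas $\psi$ of $\varphi$, with variable set $S$, the following statement. For every assignment $f'\in M$ defined on $j(S)$ with values in $\mathcal A$ (including second-order values),
\[
\mathcal A\models\psi[f'\circ j]\ \text{ implies }\ M\models``\mathcal A\models j(\psi)[f']".
\]
\begin{itemize}
\item Atomic and negated atomic formulas, and finite Boolean combinations, are immediate.
\item For $\psi=\bigvee_{i<\gamma}\chi_i$, pick $i$ with $\mathcal A\models\chi_i[f'\circ j]$. Since $j(\chi_i)$ is the $j(i)$-th disjunct of $j(\psi)$, we are done by the induction hypothesis.
\item For $\psi=\forall W\chi$, let $h\in M$ be any $j(W)$-variant of $f'$. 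Then $h\circ j$ is a $W$-variant of $f'\circ j$, as in the proof of the Claim in Theorem \ref{thm:classSOL}. Hence $\mathcal A\models\chi[h\circ j]$, and the induction hypothesis gives $M\models``\mathcal A\models j(\chi)[h]"$.
\end{itemize}
This direction is exactly the one where disjunctions and universal quantifiers are harmless; it is dual to the compactness argument of Theorem \ref{thm:classSOL}.

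The main obstacle is the base case of a sort-logic formula $\theta\in\sln$, which $j$ fixes. Here I must show that $\mathcal A\models_{\sln}\theta[g]$ implies $M\models``\mathcal A\models_{\sln}\theta[g]"$, and sort-logic truth is not absolute in general. I would handle it with Fact \ref{factSL}(2).

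First, $\lambda\in C^{(n)}$ because $C^{(n)}$ is closed, so $\mathcal A\models\theta[g]$ iff $V_\lambda\models``\mathcal A\models\theta[g]"$. Next, $j(C^{(n)})$ is $(C^{(n)})^M$ by elementarity, and it agrees with $C^{(n)}$ below $\lambda$. Hence $\lambda$ is a limit of $(C^{(n)})^M$, so $\lambda\in (C^{(n)})^M$. Applying Fact \ref{factSL}(2) inside $M$, with $V_\lambda^M=V_\lambda$, gives that $M\models``\mathcal A\models\theta[g]"$ iff $V_\lambda\models``\mathcal A\models\theta[g]"$. Combining the two equivalences gives the required agreement, which completes the induction.
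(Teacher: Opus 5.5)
Your strategy is the paper's: the same choice of $\lambda$ as a limit of $C^{(n)}$, the same one-directional induction (from $\mathcal A\models\psi[f'\circ j]$ to $M$ thinking $\mathcal A\models j(\psi)[f']$), and the same base case via Fact~\ref{factSL}(2) and $\lambda\in(C^{(n)})^M$. Two steps need repair, though.

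\textbf{First: $j$ need not fix the $\sln$-subformulas.} Your claim that every $\sln$-subformula $\theta$ of $\varphi$ can be taken to lie in $V_\kappa$, and so is fixed by $j$, is false. Renaming the vocabulary does not touch variables. Moreover, $\varphi$ may contain more than $\kappa$ distinct variables, e.g.\ $\forall(x_i\colon i<\kappa^+)\bigvee_{i<k<\kappa^+}x_i=x_k$, and these cannot all lie in $V_\kappa$. So $j$ can move the variables of an atomic subformula such as $x_\kappa=x_0$; with ordinal-indexed variables it becomes $x_{j(\kappa)}=x_0$. Then $j(\theta)\neq\theta$, and possibly even $j(\theta)\notin V_\lambda$, so you cannot apply Fact~\ref{factSL}(2) at level $\lambda$ inside $M$ to $j(\theta)$ as it stands. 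The fix is what the paper does. Since $\theta$ is finitary and $j$ fixes the symbols of $\tau$, $j(\theta)$ is just $\theta$ with each variable $x$ replaced by $j(x)$. The variable set $S_\theta$ is finite, so $j\upharpoonright S_\theta\in M$, and $M$ sees that $\mathcal A\models j(\theta)[f']$ iff $\mathcal A\models\theta[f'\circ(j\upharpoonright S_\theta)]$. Your absoluteness argument then applies to $\theta\in V_\lambda$ with this assignment. Your induction statement, phrased with $f'\circ j$, is already set up for this.

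\textbf{Second: the finite existential case is missing.} Your case list omits $\psi=\exists x\,\chi$ for a single first- or second-order variable $x$ with $\chi$ infinitary. Only $\exists^\infty$ is excluded from $\sln(\vee^\infty,\forall^\infty)$; finite existential quantification may be applied to arbitrary formulas outside sort quantifiers. This case is not covered by finite Boolean combinations. It goes as in the paper:
\begin{itemize}
\item Given an $x$-variant $g$ of $f'\circ j$ with $\mathcal A\models\chi[g]$, let $h$ agree with $f'$ except that $h(j(x))=g(x)$.
\item Then $h\in M$ because $g(x)\in M$. For second-order $x$ this uses that all subsets of $A^k$ lie in $V_\lambda\subseteq M$.
\item Also $h\circ j=g$, because $j$ is injective on variables.
\item So the induction hypothesis shows that $M$ thinks $\mathcal A\models j(\chi)[h]$, hence $\mathcal A\models j(\psi)[f']$.
\end{itemize}
With these two additions your argument coincides with the paper's proof.
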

		If we showed both claims, then (3) implies (2) in the following way: Assume (3), so that $\kappa = \text{LS}^{\omega}(\sln(\vee^\infty, \forall^\infty))$. As there are set many cardinals $\leq \kappa$ but a proper class of $\lambda > \kappa$, Claim \ref{ThisClaim} implies that there has to be a cardinal $\delta \leq \kappa$ that is $\lambda$-$C^{(n)}$-strong for arbitrarily large $\lambda$ and hence $C^{(n)}$-strong. Suppose without loss of generality that $\delta$ is the smallest $C^{(n)}$-strong cardinal. Then to show (2), we have to show $\delta = \kappa$, which follows from $\kappa \leq \delta$.  Claim \ref{ThatClaim} implies that $\delta$ is an $\text{LS}^\delta$ number of $\sln(\vee^\infty, \forall^\infty)$. But then clearly $\kappa = \text{LS}^{\omega}(\sln(\vee^\infty, \forall^\infty)) \leq \text{LS}^{\delta}(\sln(\vee^\infty, \forall^\infty)) \leq \delta$. 
		
		\medskip To prove Claim 31, we let $\lambda > \kappa$ be a limit of $C^{(n)}$, and assume that $\kappa = \text{LS}^\omega(\sln(\vee^\infty, \forall^\infty))$. We want to show that some cardinal $\leq \kappa$ is $\lambda$-$C^{(n)}$-strong. For this, it is sufficient to show that there is a homorophism $h: \mathscr P_{V_\kappa, C^{(n)} \cap V_\kappa} \rightarrow \mathscr P_{V_\lambda, C^{(n)} \cap V_\lambda}$ of pointed $\mathscr P$-structures. For then, Wilson's Fact \ref{fact:p-str} implies that there is $j: V \rightarrow M$ such that $V_\lambda \subseteq M$ and $j(C^{(n)} \cap V_\kappa) \cap V_\lambda = C^{(n)} \cap V_\lambda$. In particular, as $\lambda$ is a limit of $C^{(n)}$, there is some $\alpha > \kappa$ such that $\alpha \in C^{(n)}$ and $\alpha \in j(C^{(n)} \cap V_\kappa) \cap V_\lambda$. This implies that $j(\kappa) > \lambda$ and so $j$ has a critical point $\crit(j) \leq \kappa$. Then $j$ witnesses that $\crit(j)$ is $\lambda$-$C^{(n)}$-strong. So we showed that it is sufficient to find a homomorphism  $h: \mathscr P_{V_\kappa, C^{(n)} \cap V_\kappa} \rightarrow \mathscr P_{V_\lambda, C^{(n)} \cap V_\lambda}$ of pointed $\mathscr P$-structures. To obtain this, it is sufficient to show the following claim. 
		
		\begin{claim}\label{cl:nohom} 
			There is a sentence $\varphi \in \sln(\vee^\infty, \forall^\infty)$ such that for any limit ordinal $\alpha$:
			\[
			(V_\alpha, \in) \models \varphi \text{ iff there is no homomorphism } h: \mathscr P_{V_\kappa, C^{(n)} \cap V_\kappa} \rightarrow \mathscr P_{V_\alpha, C^{(n)} \cap V_\alpha}.
			\]
		\end{claim}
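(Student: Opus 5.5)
The plan is to write $\varphi$ as one big universal second-order quantification followed by one big disjunction. The universal quantifier ranges over all candidate maps $h$. The disjunction lists all the ways a candidate map can fail to be a homomorphism of pointed $\mathscr P$-structures. This uses exactly the connectives $\forall^\infty$ and $\vee^\infty$, applied outside any sort quantifier.

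\emph{Setting up the quantifier.} Let $\mu = |\mathcal P(V_\kappa^{<\omega})|$ and fix an enumeration $(A_i \colon i<\mu)$ of $\mathcal P(V_\kappa^{<\omega})$. Let $\alpha$ be a limit ordinal. Then $V_\alpha^{<\omega} \subseteq V_\alpha$, so every subset of $V_\alpha^{<\omega}$ is a unary predicate on the universe of $(V_\alpha,\in)$. Hence a map $h\colon \mathcal P(V_\kappa^{<\omega}) \to \mathcal P(V_\alpha^{<\omega})$ is the same thing as an assignment to a sequence $(X_i \colon i<\mu)$ of unary second-order variables, with $X_i$ read as $h(A_i)$. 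The sentence will be
\[
\varphi = \forall (X_i \colon i<\mu) \bigvee_{\theta \in \Theta} \theta .
\]
Each $\theta \in \Theta$ is an $\sln$-formula in finitely many of the $X_i$ and witnesses one failure of the homomorphism conditions. The vocabulary is $\{\in\}$, so it is finite. Because the $A_i$ are fixed parameters in $V$, membership of a formula in $\Theta$ is decided outside the structure.

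\emph{The disjuncts.} I go through the clauses of Definition~\ref{def:p-str} and, for each relation among the $A_i$ that holds in $\mathscr P_{V_\kappa, C^{(n)}\cap V_\kappa}$, include a disjunct stating that the corresponding relation fails among the $X_i$.
\begin{itemize}
\item Boolean operations and projections: for $A_k = A_i \cap A_j$, take $\exists x\, \neg(x\in X_k \leftrightarrow (x\in X_i \wedge x \in X_j))$. Do the same for complements, for the projections $\pi^{-1}$, and for $\mathrm{BP}_k$. These are first-order over $(V_\alpha,\in)$ together with the $X_i$, because finite tuples, the sets $V_\alpha^k$ and the relation ``$\exists z \in x_{k+1}$'' are all first-order definable in $V_\alpha$ for limit $\alpha$.
\item Constants: for $A_i = V_\kappa^k$, take ``$X_i \neq$ the set of $k$-tuples''.
\item $\mathrm{WF}$: for each $i$ with $A_i \in \mathrm{WF}$, take the second-order statement that $(X_i,\supsetneq)$ has an infinite descending sequence. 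This is expressible in $\cL^2 \subseteq \sln$.
\item Pointed constant: for the $i$ with $A_i = (C^{(n)}\cap V_\kappa)^{<\omega}$, take $\exists s\,\neg(s \in X_i \leftrightarrow s \in (C^{(n)}\cap V_\alpha)^{<\omega})$.
\end{itemize}
Once all disjuncts are in place, correctness is immediate by unfolding the semantics. $(V_\alpha,\in)\models\varphi$ iff every assignment of the $X_i$ violates some homomorphism condition, iff no homomorphism $h\colon \mathscr P_{V_\kappa, C^{(n)}\cap V_\kappa} \to \mathscr P_{V_\alpha, C^{(n)}\cap V_\alpha}$ exists.

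\emph{Main obstacle.} The delicate step is the last disjunct. It requires an $\sln$-formula $\chi(x)$ that, evaluated in $(V_\alpha,\in)$, defines exactly $C^{(n)}\cap\alpha$, and the disjunct uses both $\chi$ and $\neg\chi$. My first attempt is to let $\chi(\beta)$ say that the substructure $(\{y \colon y \in V_\beta\},\in)$ satisfies $\Phi^{(n)}$, by relativising the sentence of Fact~\ref{factSL}(1) to the set of elements of $V_\beta$, which is first-order definable from $\beta$ inside $V_\alpha$. Sort quantifiers range over external expansions rather than over the ambient universe, so this relativisation should not change the truth value: by Fact~\ref{factSL}(1) it holds iff $\beta \in C^{(n)}$, independently of $\alpha$. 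What remains to check is that using both $\chi$ and its negation inside a single disjunct does not exceed the $n$ sort-quantifier alternations allowed in $\sln$. This holds if $\sln$ is closed under negation, which I expect from its construction. If it is not, I would instead use a $\Pi_n$-style form of $\Phi^{(n)}$ and the fact that $\in$ on ordinals is first-order, so that ``$s \notin (C^{(n)}\cap V_\alpha)^{<\omega}$'' needs only one polarity. The remaining clauses are routine bookkeeping.
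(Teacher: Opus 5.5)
Your proposal is correct and is essentially the paper's argument written in dual form. The paper builds $\psi=\exists(X_A \colon A\in\mathcal P(V_\kappa^{<\omega}))\bigwedge_{i=1}^7\psi_i\in\sln(\wedge^\infty,\exists^\infty)$, which says that $A\mapsto X_A$ is a pointed homomorphism, with the $C^{(n)}$ clause handled by relativizing $\Phi^{(n)}$ to $V_x$ as you do; it then takes $\varphi=\neg\psi$ with negations pushed inward, which yields exactly your $\forall(X_i)_{i<\mu}\bigvee\Theta$. Your concern about needing both $\chi$ and $\neg\chi$ is settled the same way the paper implicitly settles it: its $\psi_7$ is a biconditional with $(\Phi^{(n)})^{\{y \colon E(y,V_x)\}}$ that then gets negated, so the paper likewise relies on $\sln$ being closed under negation.
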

		Let us argue that showing Claim \ref{cl:nohom} is sufficient to derive the existence of a homomorphism  $h: \mathscr P_{V_\kappa, C^{(n)} \cap V_\kappa} \rightarrow \mathscr P_{V_\lambda, C^{(n)} \cap V_\lambda}$. Suppose there is no such homomorphism. Let $\varphi$ be the sentence from Claim \ref{cl:nohom} and let $\varphi^*$ be the conjunction of $\varphi$ and Magidor's $\Phi$ (cf.\ Fact \ref{fact:MagPhi}). Then $V_\lambda \models \varphi^*$. Therefore, by $\kappa$ being $\text{LS}^\omega(\sln(\vee^\infty, \forall^\infty))$ there is a structure $(M, E^M)$ of size $< \kappa$ with $(M, E^M) \models \varphi^*$. As $(M, E^M) \models \Phi$, we can without loss of generality assume that $(M, E^M) = (V_\alpha, \in)$ for some $\alpha < \kappa$, for which in particular $V_\alpha \subseteq V_\kappa$. But by this fact and Fact \ref{fact:trivialHom}, the map $A \mapsto A \cap V_\alpha^{< \omega}$ is a (trivial) homomorphism $h: \mathscr P_{V_\kappa} \rightarrow \mathscr P_{V_{\alpha}}$ of $\mathscr P$-structures. Because $h((C^{(n)} \cap V_\kappa)^{< \omega}) = (C^{(n)} \cap V_\kappa)^{< \omega} \cap V_\alpha^{< \omega} = (C^{(n)} \cap V_\alpha)^{< \omega}$, this is also a homomorphism of pointed $\mathscr P$-structures $\mathscr P_{V_\kappa, C^{(n)} \cap V_\kappa} \rightarrow \mathscr P_{V_\alpha, C^{(n)} \cap V_\alpha}$. Thus $V_\alpha \not \models \varphi$. Contradiction.
		
		\medskip So we reduced our aim to proving Claim \ref{cl:nohom}. For this, we may instead show the following assertion.
		\begin{claim}\label{Claim:hom}
			There is a sentence $\psi \in \sln(\wedge^\infty, \exists^\infty)$ such that for any limit ordinal $\alpha$:
			\[
			(V_\alpha, \in) \models \psi \text{ iff there is a homomorphism } \mathscr P_{V_\kappa, C^{(n)} \cap V_\kappa} \rightarrow \mathscr P_{V_\alpha, C^{(n)} \cap V_\alpha}
			\]
		\end{claim}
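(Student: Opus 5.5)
We write down, as one sentence $\psi$, the existence of the homomorphism: existentially quantify (with a class-sized second-order quantifier) over its values on every element of $\mathscr P_{V_\kappa, C^{(n)}\cap V_\kappa}$, then take a big conjunction of the homomorphism conditions. The domain $\mathcal P(V_\kappa^{<\omega})$ is a fixed set, so we may index by it. For each $A \subseteq V_\kappa^{<\omega}$ take a second-order (unary, or rather $\omega$-sorted) variable $Y_A$ meant to denote $h(A)$ as a subset of $V_\alpha^{<\omega}$. Finite sequences in $V_\alpha$ are themselves elements of $V_\alpha$ when $\alpha$ is a limit, so each $Y_A$ can be a unary predicate variable ranging over subsets of the universe. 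The sentence is
\[
\psi \;=\; \exists (Y_A \colon A \subseteq V_\kappa^{<\omega}) \bigwedge \Theta,
\]
where $\Theta$ is the (set-sized) collection of all the following formulas.
\begin{itemize}
\item For each $A$, a first-order formula saying that $Y_A$ consists of finite sequences.
\item For all $A,B$: $Y_{A\cap B} = Y_A \cap Y_B$ and $Y_{V_\kappa^{<\omega}\setminus A} = \text{Seq}\setminus Y_A$.
\item For each $k$: $Y_{V_\kappa^k}$ equals the set of $k$-sequences.
\item $Y_{\pi^{-1}_{k,\vec i}(A)} = \pi^{-1}_{k,\vec i}(Y_A)$ and $Y_{\text{BP}_k(A)} = \text{BP}_k(Y_A)$. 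Each is a first-order formula in $\in$ and the second-order variables.
\item $Y_{(C^{(n)}\cap V_\kappa)^{<\omega}} = \{s : s \text{ is a sequence of elements of } C^{(n)}\}$.
\item For every family $\mathcal A \subseteq \mathcal P(V_\kappa^{<\omega})$ with $\mathcal A \in \text{WF}$, a formula saying that $\{Y_A : A\in\mathcal A\}$ is well-founded under $\supsetneq$.
\end{itemize}
All infinitary connectives used are $\bigwedge$ and $\exists$, outside any sort quantifiers, so $\psi \in \sln(\wedge^\infty,\exists^\infty)$. Then $V_\alpha\models\psi$ iff $A\mapsto Y_A$ is a pointed homomorphism. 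Converting the whole thing to the dual, $\varphi$ is obtained from $\neg\psi$ in negation normal form: the big $\exists$ becomes $\forall$ and the big $\bigwedge$ becomes $\bigvee$, with negations of the inner $\sln$ formulas still in $\sln$. This is why Claim \ref{Claim:hom} suffices for Claim \ref{cl:nohom}.

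The routine parts are the Boolean and projection clauses, which are first-order over $(V_\alpha,\in)$. Here we need that $V_\alpha^{<\omega}\subseteq V_\alpha$ for limit $\alpha$ and that $\text{BP}_k$ is defined with bounded quantification, so these formulas are absolute.

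The constant clause needs $C^{(n)}\cap V_\alpha$ to be definable in $(V_\alpha,\in)$ by an $\sln$ formula. We get this from Fact \ref{factSL}(1): $\beta\in C^{(n)}$ iff $V_\beta \models \Phi^{(n)}$. Since $V_\beta$ is a set in $V_\alpha$, we can express ``the structure $(V_\beta,\in)$ satisfies $\Phi^{(n)}$'' by relativizing $\Phi^{(n)}$ to the definable substructure $V_\beta$. Sort logic is closed under relativization to definable subdomains, and the sort quantifiers still range over arbitrary expansions. This relativized formula is evaluated in $V$, not in $V_\alpha$, which is what we want.

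The WF clause is the main obstacle. Well-foundedness of $\{Y_A : A\in\mathcal A\}$ under $\supsetneq$ is a genuinely second-order statement, namely that no infinite descending sequence exists. With the parameters $Y_A$ already bound, we write it as follows. For every $X$ ranging over subsets coding a sub-indexed family, the witness is built inside the big conjunction: for each nonempty $\mathcal B\subseteq\mathcal A$, the disjunction over $B\in\mathcal B$ of ``$Y_B$ is $\supsetneq$-minimal among $\{Y_{B'}: B'\in\mathcal B\}$''. That disjunction, however, would need $\bigvee^\infty$, which is not allowed.

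So instead we must say that the \emph{true} family of $Y_A$'s is well-founded, a statement about $V$ rather than $V_\alpha$. We code the family as a single second-order object $Z=\{(A,s) : s\in Y_A,\ A\in\mathcal A\}$. This is possible since the $A$'s are in $V_{\kappa+2}\subseteq V_\alpha$ for $\alpha>\kappa+2$; smaller $\alpha$ can be handled by separate trivial cases. We then state well-foundedness of the induced relation via a second-order formula: every nonempty subclass of $\mathcal A$ has an element whose $Z$-section is not strictly contained in another's. This is $\cL^2$-expressible, hence in $\sln$, and it is correct because second-order quantifiers over $V_\alpha$ range over all subsets.

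I expect verifying that this coding stays within the restricted syntax, with no infinitary connective inside a sort quantifier, to be the delicate point. That completes the plan.
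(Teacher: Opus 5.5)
\textbf{There is a genuine gap, and it sits in the $\text{WF}$ clause.} The rest of your plan matches the paper's proof. The paper also uses one big existential quantifier over $(Y_A\colon A\in\mathcal P(V_\kappa^{<\omega}))$ followed by a set-sized conjunction. Your Boolean, $X^k$, projection, $\text{BP}_k$ and pointed-constant clauses (the last via $\Phi^{(n)}$ relativized to $V_x$) are essentially the paper's $\psi_1$--$\psi_3$ and $\psi_5$--$\psi_7$.

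You have misidentified $\text{WF}$. It is a unary relation on the universe $\mathcal P(X^{<\omega})$ of the $\mathscr P$-structure. An element $A\subseteq X^{<\omega}$ lies in $\text{WF}$ iff there is no sequence $s_0\subsetneq s_1\subsetneq\cdots$ of members of $A$. Here $\subsetneq$ is the proper initial-segment relation on finite sequences, defined just before Definition~\ref{def:p-str}. The ``$A\subseteq\mathcal P(X^{<\omega})$'' in item (4) is a slip for $A\in\mathcal P(X^{<\omega})$, as the paper's $\psi_4$ makes explicit. $\text{WF}$ is not a property of families of subsets ordered by strict inclusion.

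This is not cosmetic. Fact~\ref{fact:p-str} needs ``no infinite branch'' to be preserved in order to produce a well-founded target. Under your reading, even the trivial map $A\mapsto A\cap V_\alpha^{<\omega}$ of Fact~\ref{fact:trivialHom} fails to be a homomorphism for limit $\alpha<\kappa$. To see this, take distinct $p_m\in V_\alpha^{<\omega}$ and distinct $q_m\in V_\kappa^{<\omega}\setminus V_\alpha^{<\omega}$, and set $A_m=\{p_0,\dots,p_m,q_m\}$. The $A_m$ form an antichain, but their traces form a strictly increasing chain. That would break the argument in which this claim is used.

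With the correct reading the clause is easy. For each $A\in\text{WF}^{\mathscr P_{V_\kappa}}$, conjoin the finitary second-order formula
\[
\neg\exists F\,\bigl(F\text{ is a function with domain }\omega\wedge\forall m\,Y_A(F(m))\wedge\forall m\,F(m)\subsetneq F(m+1)\bigr).
\]
This formula is in $\sln$ with $Y_A$ free, so it is allowed inside the big conjunction. It is correct in every limit $V_\alpha$, because any $\omega$-sequence of elements of $V_\alpha$ is a subset of $V_\alpha$ and is therefore caught by the second-order quantifier.

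Even granting your reading, your construction does not prove the claim as stated. The coding $Z=\{(A,s)\colon s\in Y_A,\ A\in\mathcal A\}$ needs the indices $A$ to be elements of $V_\alpha$, so it only works for $\alpha$ above $\kappa$. The phrase ``smaller $\alpha$ can be handled by separate trivial cases'' is not an argument. Those small $\alpha$ are exactly where the claim is applied. The LS number yields a model $V_\alpha$ with $\alpha<\kappa$, and one must conclude $V_\alpha\models\psi$ from the trivial homomorphism. So the $\text{WF}$ clause has to be checkable in $V_\alpha$ while referring to $A$ only through the value of the variable $Y_A$.

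You also never say how $Z$ would be tied to the variables $Y_A$. That would require naming each $A$ as an element of $(V_\alpha,\in)$ by a formula of $\sln(\wedge^\infty,\exists^\infty)$, which you do not provide. The paper's $\chi^A_1$ avoids both problems, because it mentions $A$ only as an index of the conjunction.
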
  
		Given Claim \ref{Claim:hom}, taking $\varphi = \neg \psi$ (and pushing negations through the infinitary quantifiers and conjunctions) proves Claim \ref{cl:nohom}. To show Claim \ref{Claim:hom}, we let \[\psi \mathrel = \exists(X_A \colon A \in \mathcal P(V_\kappa^{<\omega}))(\bigwedge_{i = 1}^7 \psi_i),\] where each $X_A$ is a unary second-order variable and each $\psi_i$ will be specified below. The purpose of the sentences is the following. If for some $V_\alpha$ there is a sequence $(X_A^{V_\alpha} \colon A \in \mathcal P(V_\kappa^{< \omega}))$ witnessing that $V_\alpha$ satisfies $\psi_i$, then the map $h: \mathcal P(V_\kappa^{< \omega}) \rightarrow \mathcal P(V_\alpha^{< \omega})$ defined by $A \mapsto X_A^{V_\alpha}$ shall preserve the clause of Definition \ref{def:p-str} corresponding to $\psi_i$. Then if $V_\alpha \models \psi$, this map is a full homomorphism. Let us go through the conjuncts of $\psi$, each time arguing why the map $h$ is a homomorphism with respect to the intended part of the structure.
		\begin{equation*}
		\begin{split} 
		& \psi_1 = \bigwedge_{\substack {A,B,C \in \mathcal P(V_\kappa^{< \omega}) \\ A \cap B = C}} \forall x ((X_A(x) \wedge X_B(x)) \leftrightarrow X_C(x)). \\
		& \psi_2 = \bigwedge_{\substack {A,B \in \mathcal P(V_\kappa^{< \omega}) \\ V_\kappa^{< \omega}\setminus A = B} } \forall x(X_{V_\kappa^{< \omega}}(x) \wedge \neg X_A(x) \leftrightarrow X_B(x)).\\
		\end{split}
		\end{equation*}
		Clearly, $\psi_1$ codes that $X_A^{V_\alpha} \cap X_B^{V_\alpha} = X_C^{V_\alpha}$. Because $V_\kappa^{< \omega} \setminus V_\kappa^{< \omega} = \emptyset$, by $\psi_2$ we get that $h(\emptyset) = \emptyset$ and $h(V_\kappa^{< \omega}) = V_\kappa^{< \omega}$. Using this, satisfaction of $\psi_2$ implies preservation of complements.
		\begin{equation*}
		\psi_3 = \bigwedge_{k \in \omega} \forall x (``x \text{ is a sequence of length } k" \leftrightarrow X_{V_\kappa^k}(x)).
		\end{equation*}
		Here we take that $x$ is a sequence of length $k$ simply written out as a set-theoretic statement in first-order logic. As $V_\alpha$ knows of all sequences of length $k$, this implies that $h(V_\kappa^k) = V_\alpha^k$. The sentence $\psi_4$ is a conjunction
		\begin{equation*}
		\psi_4 = \bigwedge_{\substack{A \subseteq X^{< \omega} \\ (A,\supsetneq) \text{ well-founded}}} \chi^A_1,
		\end{equation*}
		where each $\chi_1^A$ is given by
		\[
		\chi_1^A = \neg \exists F(\dom(F) = \omega \wedge \forall n \in \omega(X_A(F(n)))\wedge \forall n \in \omega (F(n) \subsetneq F(n+1)) ).
		\]
		Note that $(A,\supsetneq)$ is well-founded iff there is a function $f$ with domain $\omega$ and $\ran(f) \subseteq A$ such that $f(n) \subsetneq f(n+1)$ for all $n \in \omega$. This is coded by the above sentence, using a binary second-order variable $F$ which we implicitly assume to be functional (note that we could express this).
		
		\medskip In the formula below, for some $k \in \omega$ and $1 \leq i_1, \dots, i_j \leq k$, and further $A \in \mathcal P(V_\kappa^{< \omega})$, we write $\varphi_{k,i_1, \dots, i_j}(x)$ for the formula $\exists y_1, \dots, y_k(x = (y_1, \dots, y_k) \wedge X_A((y_{i_1}, \dots, y_{i_j})))$, expressing that $x$ is some $k$-tuple $x = (y_1, \dots, y_k)$ and $(y_{i_1}, \dots, y_{i_j})$ is a member of $X_A$. Then let
		\begin{equation*}
		\begin{split}
		\psi_5 =  \bigwedge_{\substack {A,B \in \mathcal P(V_\kappa^{\omega}), \\ B = \{(x_1, \dots, x_k) \colon (x_{i_1}, \dots, x_{i_j}) \in A) \}, \\  k \in \omega, 1 \leq i_1, \dots, i_j \leq k}} 	\forall x (X_B(x) \leftrightarrow \varphi_{k,i_1, \dots, i_j}^A(x)).
		\end{split}
		\end{equation*}
		Then $\psi_5$ simply codes the defining conditions of $\pi^{-1}_{k,(i_1, \dots, i_j)}(A) = B$. 
		
		\medskip Now write $\chi_k^A$ for the formula 
		\[
		\exists y_1, \dots, y_{k+1}(x = (y_1, \dots, y_{k+1}) \wedge \exists z(E(z,x_{k + 1}) \wedge X_A((y_1, \dots, y_k, z))) ),\] 
		expressing that $x = (y_1, \dots, y_{k+1})$ is some $(k+1)$-tuple and there is some $z \in y_{k+1}$ such that $(y_1, \dots, y_k,z)$ is a member of $X_A$, and let
		\begin{equation*}
		\begin{split}
		& \psi_6 = \bigwedge_{\substack{B = \{(x_1, \dots, x_{k + 1}) \colon \\ \exists z \in x_{k+1} (x_1, \dots, x_k, z) \in A \} \\ A, B \in \mathcal P(V_\kappa^{\omega}), k \in \omega } } \forall x(X_B(x) \leftrightarrow \chi_{k}^A(x)). \\
		\end{split}
		\end{equation*}
		Again, $\psi_6$ simply codes the defining conditions of $\text{BP}_k(A) = B$. Finally, let
		\[
		\psi_7 = \forall x (X_{C^{(n)} \cap V_\kappa}(x) \leftrightarrow (\Phi^{(n)})^{\{y \colon E(y,V_x)\}}).
		\]
		Here $(\Phi^{(n)})^{\{y \colon E(y,V_x)\}}$ is the relativization of $\Phi^{(n)}$ from Fact \ref{factSL} to $V_x$, i.e., to the rank initial segment cut off at some ordinal $x$. Then $(\Phi^{(n)})^{\{y \colon E(y,V_x)\}}$ truthfully codes that $V_x$ is such that $x \in C^{(n)}$. Then $\psi_7$ uses that $V_\alpha$ is correct about $V_\beta$ for $\beta < \alpha$ and the sentence $\Phi^{(n)}$ to express that $X_{C^{(n)} \cap V_\kappa}^{V_\alpha}$ is actually $V_\alpha \cap C^{(n)}$. This ends the construction of $\psi$ and we thus showed what we promised, and hence Claim \ref{ThisClaim}.
		
		\medskip And now we show Claim \ref{ThatClaim}, i.e., if $\kappa$ is $C^{(n)}$-strong, then $\kappa$ is an $\text{LS}^{\kappa}$ number of $\sln(\vee^\infty, \forall^\infty)$. So let $\tau$ be some vocabulary of size $< \kappa$, $\mathcal A$ a $\tau$-structure and $\varphi \in\sln(\vee^\infty, \forall^\infty)[\tau]$ such that $\mathcal A \models \varphi$. Without loss of generality we can assume that $\tau \in H_\kappa$. Our goal is to show that there is a structure of size $< \kappa$ satisfying $\varphi$. For this purpose take a $\lambda > \kappa$ which is a limit point of $C^{(n)}$ with $\mathcal A, \varphi \in V_\lambda$. By $C^{(n)}$-strength of $\kappa$ there is an elementary embedding $j: V \rightarrow M$ with $\crit(j) = \kappa$, $j(\kappa) > \lambda$, $V_\lambda \subseteq M$ and $C^{(n)} \cap V_\lambda = j(C^{(n)} \cap V_\kappa) \cap V_\lambda$. Note that $\mathcal A \in M$ and that $j(\tau) = \tau$. We will show that $M \models ``\mathcal A \models j(\varphi)"$. This makes sense, because $j(\varphi)$ is a $\tau$-sentence, as $j(\tau) = \tau$. Furthermore, this is sufficient, because then $M \models ``|A| < \lambda < j(\kappa) \wedge \mathcal A \models j(\varphi)"$, and so by elementarity of $j$, in $V$ there has to be some model of $\varphi$ of size $< \kappa$. We show the following claim, which will be sufficient. Let $S$ be the set of variables appearing in $\varphi$. Note that the set of variables used in $j(\varphi)$ is $j(S)$.
		\begin{claim}\label{claim:slninduction} 
			For every structure $\mathcal B \in V_\lambda$ and for every subformula $\psi$ of $\varphi$ and any variable assignment $f: j(S) \rightarrow V_\lambda$ with $f \in M$, if $\mathcal B \models \psi[f \circ (j \upharpoonright S)]$, then $M \models ``\mathcal B  \models j(\psi)[f]"$. 
		\end{claim}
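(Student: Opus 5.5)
I will prove Claim \ref{claim:slninduction} by induction on the complexity of the subformula $\psi$ of $\varphi$, for all $\mathcal B \in V_\lambda$ and all assignments $f \in M$ at once. Recall the syntax: the infinitary operations $\bigvee$ (of arbitrary size) and $\forall W$ (over arbitrary sets of variables) only occur \emph{outside} the scope of sort quantifiers. Beneath them sit finitary connectives and ordinary formulas of $\sln$. So the induction splits into two regimes: a base regime where $\psi \in \sln$ proper, and an inductive regime for finite $\wedge$, $\vee$, infinitary $\bigvee$, and infinitary $\forall W$.

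\textbf{Base regime.} Let $\psi \in \sln$. Then $\psi$ is a set-sized object of small complexity, and the key point is that $j(\psi)$ is essentially $\psi$ up to renaming variables via $j$. Concretely, $\psi$ is built from symbols of $\tau$ (which $j$ fixes, since $\tau \in H_\kappa$) and from variables in $S$, which $j$ maps to $j``S$. So $j(\psi)$ is the renaming of $\psi$ along $j\upharpoonright S$, and $\mathcal B \models \psi[f\circ j]$ iff $\mathcal B \models j(\psi)[f]$ in $V$. It remains to transfer this truth to $M$.

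\begin{itemize}
\item Take $\alpha < \lambda$ in $C^{(n)}$ with $\mathcal B, \psi, f\restriction j``S \in V_\alpha$; this uses that $\lambda$ is a limit of $C^{(n)}$.
\item By Fact \ref{factSL}(2), $V_\alpha \models ``\mathcal B \models j(\psi)[f]"$.
\item Since $V_\lambda \subseteq M$, $V_\alpha^M = V_\alpha$.
\item Since $C^{(n)} \cap V_\lambda = j(C^{(n)}\cap V_\kappa)\cap V_\lambda$, we have $M \models ``\alpha \in C^{(n)}"$. Here I use that $j(C^{(n)} \cap V_\kappa)$ is $(C^{(n)})^M$ below $j(\kappa)$, by elementarity.
\item Applying Fact \ref{factSL}(2) inside $M$ yields $M \models ``\mathcal B \models j(\psi)[f]"$.
\end{itemize}

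Negated sort-logic formulas need no separate treatment, since they are just formulas of $\sln$. This base step is the main obstacle. The points needing care are: that $j(\psi)$ really is the renamed $\psi$ even when $\psi$ is infinitary of size $<\omega$ only (so fine); and the $C^{(n)}$-correctness of $\alpha$ in $M$, which is exactly where $C^{(n)}$-strength, rather than plain strength, is needed.

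\textbf{Finite $\wedge$ and $\vee$.} These cases are immediate from the induction hypothesis.

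\textbf{Infinitary disjunction.} Let $\psi = \bigvee_{i<\gamma}\chi_i$. Then $j(\psi) = \bigvee_{i<j(\gamma)}\chi^*_i$, with $j(\chi_i) = \chi^*_{j(i)}$. If $\mathcal B \models \chi_i[f\circ j]$ for some $i$, the induction hypothesis gives $M \models ``\mathcal B \models \chi^*_{j(i)}[f]"$, hence $M \models ``\mathcal B \models j(\psi)[f]"$.

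\textbf{Infinitary universal quantifier.} Let $\psi = \forall W\chi$. Then $j(\psi) = \forall j(W) j(\chi)$. Work in $M$ and let $g \in M$ be a $j(W)$-variant of $f$. Values range over $\mathcal B$ and its powersets, which lie in $V_\lambda$. Then $g\circ (j\restriction S)$ is a $W$-variant of $f\circ j$: for $x \notin W$ we have $j(x)\notin j(W)$, so $g(j(x)) = f(j(x))$. Moreover this composite lies in $V$ because $j\restriction S \in V$. By the hypothesis $\mathcal B \models \psi[f\circ j]$, we get $\mathcal B \models \chi[g\circ j]$. The induction hypothesis then gives $M \models ``\mathcal B \models j(\chi)[g]"$. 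Since $g$ was arbitrary in $M$, $M \models ``\mathcal B \models j(\psi)[f]"$.

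Note that this direction only quantifies over variants in $M$, which is why $\forall$ (not $\exists$) is permitted with $V\to M$. Second-order variables are handled the same way, using $V_\lambda \subseteq M$ so that relations on $B$ are the same in $V$ and $M$.

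Applying the Claim to $\psi = \varphi$ (a sentence) and $\mathcal B = \mathcal A$ yields $M \models ``\mathcal A \models j(\varphi)"$, completing Claim \ref{ThatClaim}.
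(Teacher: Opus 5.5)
Your base case, the $\bigvee$ case and the $\forall W$ case are correct and match the paper's argument. In the base case you pick $\alpha<\lambda$ in $C^{(n)}$ and let $M$ recognise it from $C^{(n)}\cap V_\lambda = j(C^{(n)}\cap V_\kappa)\cap V_\lambda$. The paper instead shows $\lambda\in(C^{(n)})^M$; either works.

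\textbf{The gap: a missing case.} The problem is your description of the syntax, and hence your case split. In $\sln(\vee^\infty,\forall^\infty)$ we have $\kappa_2=\omega$, so the clause forming $\exists W\varphi$ with $|W|<\omega$ applies to \emph{every} formula of the logic, not only to formulas of $\sln$. Only sort quantifiers (and negation) are confined to $\sln$. So formulas such as $\exists x\bigvee_{i<\gamma}\chi_i$, or $\forall W\,\exists X\bigvee_i\chi_i$ with $X$ second-order, belong to the logic. Your induction has no case for $\psi=\exists x\,\chi$ with $\chi\notin\sln$. Your remark that only $\forall$ ``is permitted'' conflates finitary with infinitary existential quantification. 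The paper treats exactly this case separately.

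\textbf{How to fill it.} The case is not vacuous. It is the one inductive step where a witness found in $V$ must be transported into $M$. Suppose $\mathcal B\models\exists x\chi[f\circ(j\upharpoonright S)]$ via an $x$-variant $g$, and set $a=g(x)$.
\begin{enumerate}
\item[(i)] $a$ is an element of $B$ or a subset of some $B^m$. Since $\mathcal B\in V_\lambda$ and $\lambda$ is a limit, $a\in V_\lambda\subseteq M$.
\item[(ii)] Let $h$ agree with $f$ except that $h(j(x))=a$. Then $h\in M$.
\item[(iii)] Since $j$ is injective, $h\circ(j\upharpoonright S)=g$.
\item[(iv)] The induction hypothesis gives $M\models``\mathcal B\models j(\chi)[h]"$, hence $M\models``\mathcal B\models\exists j(x)\,j(\chi)[f]"$.
\end{enumerate}
For a finite block of variables, modify $f$ at finitely many points. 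Finiteness is what makes $h\in M$ automatic without any closure assumption on $M$.

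\textbf{A smaller point in your base case.} If $\psi$ uses variables of rank $\geq\kappa$, then $j(\psi)$ and $f\upharpoonright j``S$ need not lie in $V_\alpha$, or even in $V_\lambda$. So you cannot apply Fact \ref{factSL}(2) to $j(\psi)$ directly. Apply it instead to $\psi$, with the finite assignment $f\circ j$ restricted to the free variables of $\psi$. Then pass to $j(\psi)$ inside $M$ via the renaming given by $j$ restricted to the finitely many variables of $\psi$; this map is finite, so it lies in $M$.
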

		Note that $j\upharpoonright S$ is a map $S \rightarrow j(S)$, so $f \circ (j \upharpoonright S)$ is a sensible assignment on $S$. Furthermore, $\varphi$ is a sentence and so concrete assignments do not play a role in its evaluation. Thus we get that $\mathcal A \models \varphi$ implies that $M \models ``\mathcal A \models j(\varphi)"$ by the claim, by just taking $f$ to be any assignment and $\mathcal B$ to be $\mathcal A$. 
		
		\medskip Before we show Claim \ref{claim:slninduction}, we show the following stronger assertion for formulas of $\sln$, which gives us the base case for our induction for Claim \ref{claim:slninduction}.
		\begin{claim}\label{claim:secondslninduction} 
			For every structure $\mathcal B \in V_\lambda$ and for every subformula $\psi$ of $\varphi$ such that $\psi \in \sln$, and every $f: j(S) \rightarrow V_\lambda$ with $f \in M$, we have $\mathcal B \models \psi[f \circ (j \upharpoonright S)]$ iff $M \models ``\mathcal B \models j(\psi)[f]"$.
		\end{claim}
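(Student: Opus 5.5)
The key observation is that $\psi$ is a formula of the set-sized logic $\sln$ with $\psi \in V_\kappa$, so $j$ acts on it only by renaming within $V_\kappa$. Since $\tau \in H_\kappa$ and $\crit(j) = \kappa$, we may assume (renaming variables if necessary, which is harmless by clause (v) of Definition \ref{def:abstr:log}) that every $\sln$-subformula $\psi$ of $\varphi$, together with its finitely many free variables, lies in $V_\kappa$. Then $j(\psi) = \psi$, and $j(x) = x$ for every free variable $x$ of $\psi$. Consequently $f \circ (j \upharpoonright S)$ and $f$ agree on the free variables of $\psi$.

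By the coincidence lemma for $\sln$, the claim therefore reduces to the following equivalence for the finite assignment $g = f \upharpoonright \text{free}(\psi)$:
\[
\mathcal B \models \psi[g] \quad \text{iff} \quad M \models ``\mathcal B \models \psi[g]".
\]
Here $g$ maps into $V_\lambda$ and so belongs to $V_\lambda \subseteq M$. This is an absoluteness statement for $\sln$-satisfaction between $V$ and $M$ for parameters in $V_\lambda$. To prove it, I will pass through a rank initial segment.

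First I choose $\alpha \in C^{(n)}$ with $\alpha < \lambda$ and $\mathcal B, g, \psi \in V_\alpha$. This is possible because $\lambda$ is a limit of $C^{(n)}$ and $\mathcal B \in V_\lambda$. By Fact \ref{factSL}(2), applied with the finitely many assigned values added as constants, we get
\[
\mathcal B \models \psi[g] \quad \text{iff} \quad V_\alpha \models ``\mathcal B \models \psi[g]".
\]
Next I transfer this to $M$. Since $C^{(n)} \cap V_\lambda = j(C^{(n)} \cap V_\kappa) \cap V_\lambda$, we have $\alpha \in j(C^{(n)} \cap V_\kappa) \subseteq (C^{(n)})^M$, using elementarity of $j$ and that $C^{(n)}$ is definable. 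So $M \models ``\alpha \in C^{(n)}"$. Moreover $V_\alpha^M = V_\alpha$ because $V_\lambda \subseteq M$. Applying Fact \ref{factSL}(2) inside $M$, which satisfies ZFC, yields
\[
M \models ``\mathcal B \models \psi[g]" \quad \text{iff} \quad V_\alpha \models ``\mathcal B \models \psi[g]".
\]
Combining the two equivalences proves the claim.

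The main obstacle is bookkeeping rather than any deep step. One must justify that the $\sln$-subformulas and their variables lie in $V_\kappa$, so that $j$ fixes them. This holds because an $\sln$ formula over $\tau \in H_\kappa$ is a hereditarily finite object over $\tau$ and the variable symbols, and the infinitary operations are only applied outside the $\sln$ part. One must also check that the $C^{(n)}$-strength condition is used exactly to make $\alpha$ a member of $C^{(n)}$ in the sense of $M$. This is the step where plain strength would not suffice.
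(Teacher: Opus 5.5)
\textbf{There is a genuine gap in your reduction to $j(\psi)=\psi$.} You may not assume that the free variables of every $\sln$-subformula lie in $V_\kappa$.

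The free variables of such a $\psi$ are typically bound by an outer infinitary quantifier $\forall W$. So any renaming has to be done globally on $S$ and has to stay injective. But $|V_\kappa|=\kappa$ (since $\kappa$ is measurable), while $\sln(\vee^\infty,\forall^\infty)$ allows $|S|>\kappa$. For example, the atomic subformulas $x_i=x_j$ of $\forall(x_i\colon i<\kappa^+)\bigvee_{i<j<\kappa^+}x_i=x_j$ jointly use $\kappa^+$ variables.

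Clause (v) of Definition~\ref{def:abstr:log} does not help: it concerns renaming vocabulary symbols, not variables. For variables $x$ of rank $\geq\kappa$ we have $j(x)\neq x$. So your conclusion that $f\circ(j\upharpoonright S)$ and $f$ agree on the free variables of $\psi$ is false in general, and the claim really compares two different formulas.

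\textbf{The fix is the paper's observation.} Since $\tau\in V_\kappa$ and $j$ fixes the finitary logical syntax of $\sln$, $j(\psi)$ is exactly $\psi$ with each variable $x$ replaced by $j(x)$. Moreover, $j\upharpoonright S$ is injective. So by the variable-renaming lemma, $\mathcal B\models\psi[f\circ(j\upharpoonright S)]$ iff $\mathcal B\models j(\psi)[f]$ in $V$. Only then do you need absoluteness of $\sln$-satisfaction between $V$ and $M$.

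One further point: $j(x)$ may have rank above $\lambda$, and Fact~\ref{factSL}(2) requires the formula to lie in $V_\alpha$. So you should apply absoluteness to a copy of $j(\psi)$ with variables injectively renamed into $V_\alpha$. Invariance of satisfaction under such renaming is a ZFC fact, so it holds in $M$ as well.

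\textbf{Your absoluteness argument itself is correct.} It is a mild variant of the paper's. The paper shows that $\lambda$ itself lies in $(C^{(n)})^M$, because $\lambda$ is a limit of $C^{(n)}$ and $M$ computes $C^{(n)}$ correctly below $\lambda$. You instead pick $\alpha\in C^{(n)}\cap\lambda$ and read off $\alpha\in j(C^{(n)}\cap V_\kappa)=(C^{(n)})^M\cap j(\kappa)$ directly. This avoids using closure of $(C^{(n)})^M$ and works equally well.
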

		To show Claim \ref{claim:secondslninduction}, notice that because $\psi \in \sln$ and $\tau \in H_\kappa$, the only difference between $\psi$ and $j(\psi)$ is a possible renaming of variables, so, for example,  application of $j$ to $\psi = R(x_1, \dots, x_n)$ gives us $j(\psi) = R(j(x_1), \dots, j(x_n))$. This means that if $y$ is some variable appearing in $j(\psi)$ it is of the form $j(x)$ for some $x \in S$. But then $(f \circ (j\upharpoonright S))(x) = f(j(x)) = f(y)$ and so \[\mathcal B \models R(f(j(x_1)), \dots, f(j(x_n))) \text{ iff } M \models ``\mathcal B \models R(f(y), \dots, f(y))".\] These calculations that $j(\psi)$ and $\psi$ differ only by a possible renaming of variables carry over to all sentences $\psi \in \sln[\tau]$. Thus for Claim \ref{claim:secondslninduction} to hold, we only have to check whether $M$ is correct about $\mathcal B$'s satisfaction of $\sln$-formulas. But this is the case: Because $\lambda \in C^{(n)}$, $V_\lambda$ and $V$ agree about $\sln$-satisfaction (cf.\ Fact \ref{factSL}). Further, $\lambda$ is a limit point of $C^{(n)}$ and $M$ is correct about $C^{(n)}$ below $\lambda$ (as it is the image of an embedding witnessing $C^{(n)}$-strength). Thus $M$ sees that it is a limit point of $C^{(n)}$ and so $\lambda \in (C^{(n)})^M$. Therefore also $V_\lambda^M$ and $M$ agree about $\sln$-satisfaction. But also $V_\lambda^M = V_\lambda$ and therefore $V$, $V_\lambda$, and $M$ all agree on $\sln$-satisfaction. 
		
		\medskip Now let us start our induction to prove Claim \ref{claim:slninduction}. Recall that we do not allow sort quantifiers or negations to take infinitary formulas and so the base case and the cases of application of sort quantifiers and negation follow from Claim \ref{claim:secondslninduction}. The case for $\wedge$ is trivial.
		
		\medskip So let us consider the case $\psi = \exists x \chi $ and $\mathcal B \models \exists x \chi[f \circ (j \upharpoonright S)]$, where $x$ is a single (possibly second-order) variable. Then there is an $x$-variant $g$ of $f \circ (j \upharpoonright S)$ such that $\mathcal B \models \chi[g]$. Further $j(\psi) = \exists j(x) j(\chi)$. Fix $a = g(x)$. In $M$, we have to find a $j(x)$-variant $h$ of $f$ with $M \models ``\mathcal B \models j(\chi)[h]."$ Let $h$ be defined by
		\[
		h(y) = \begin{cases}
		a, &\text{ if } y = j(x)\\
		f(y), &\text{ otherwise.}
		\end{cases}
		\]
		For the case that $x$ is a second-order variable, note that $\mathcal B \in V_\lambda \subseteq M$ and so $M$ contains all subsets of $B$. Clearly, $h$ is a $j(x)$-variant of $f$.  Further, $h \in M$ as it was defined from $f \in M$. Also for $x \neq y \in S$, $j(y) \neq j(x)$, so $g(y) = f \circ (j \upharpoonright S)(y) = f(j(y)) =  h(j(y))$ and, by definition, $h(j(x)) = a = g(x)$. Hence $h \circ (j \upharpoonright S)  = g$. In particular $\mathcal B \models \chi[h \circ (j \upharpoonright S)]$ and thus, by induction hypothesis, $M \models ``\mathcal B \models j(\chi)[h]"$, as desired.

		\medskip If $\psi = \bigvee_{i < \gamma} \chi_i$ and $\mathcal B \models \psi[f \circ (j \upharpoonright S)]$, there is some $i < \gamma$ such that $\mathcal B \models \chi_i[f \circ (j \upharpoonright S)]$. By induction hypothesis, $M \models ``\mathcal B \models j(\chi_i)[f]."$ Now $j(\psi) = \bigvee_{k < j(\gamma)} \chi_k^*$ for some $\chi_k^*$. Because $\chi_i$ appears as a disjunct in $\psi$, $j(\chi_i)$ is one of the disjuncts of $j(\psi)$. In particular, then $M \models ``\mathcal B \models j(\psi)[f]"$. 
		
		\medskip If $\psi = \forall T \chi$ and $\mathcal B \models \forall T \chi[f \circ (j \upharpoonright S)]$. Then for any $T$-variant $g$ of $f \circ (j \upharpoonright S)$, we have $\mathcal B \models \chi[g]$. Note that $j(\psi) = \forall j(T) j(\chi)$. Now to show that $M \models ``B \models \forall j(T) j(\chi)[f]"$, we let $h$ be any $j(T)$-variant of $f$ with $h \in M$. This means $h \upharpoonright (j(S) \setminus j(T)) = f \upharpoonright (j(S) \setminus j(T))$. Consider $h' = h \circ (j \upharpoonright S)$. Then if $x \in (S \setminus T)$, we have $j(x) \in (j(S) \setminus j(T))$. And then $h'(x) = h(j(x)) = f(j(x))$, where the latter equality holds because $h$ is a $j(T)$-variant of $f$. But this shows that $h'$ is a $T$-variant of $f \circ (j \upharpoonright S)$. Thus $\mathcal B \models \chi[h']$, which just means $\mathcal B \models \chi[h \circ (j \upharpoonright S)]$. Then by induction hypothesis $M \models ``\mathcal B \models j(\chi)[h]"$. As $h$ was an arbitrary $j(T)$-variant of $f$ from $M$ we thus showed that $M \models  ``\mathcal B \models \forall j(T) j(\chi)[f]."$ This ends the proof of Claim \ref{ThatClaim}, and hence of (3) implies (2).
		
		\medskip Finally, we show that (2) implies (4). If $\kappa$ is the smallest $C^{(n)}$-strong cardinal, Claim \ref{ThatClaim} implies that $\kappa$ is an $\text{LS}^{\kappa}$ number of $\sln(\vee^\infty, \forall^\infty)$. Left to show is that $\kappa$ is minimal with that property. So suppose there is $\delta < \kappa$ which is an $\text{LS}^\delta$ number of $\sln(\vee^\infty, \forall^\infty)$. Then $\gamma = \text{LS}^\omega(\sln(\vee^\infty, \forall^\infty)) \leq \text{LS}^\delta(\sln(\vee^\infty, \forall^\infty)) \leq \delta < \kappa$. Because (3) implies (2), $\gamma$ is the smallest $C^{(n)}$-strong cardinal. But this contradicts that $\kappa$ is the smallest $C^{(n)}$-strong cardinal. \hfill \qed
	\end{Proof}
	
	\section{Characterising Shelah cardinals}\label{sec:Shelah}
	Finally, let us show that we can give the following compactness characterisation of \emph{Shelah cardinals}. To our best knowledge, this is the first known model-theoretic characterisation of Shelah cardinals. 
	\begin{theorem}\label{thm:ShelahCardinals}
		The following are equivalent for a cardinal $\kappa$:
		\begin{enumerate}
			\item[(1)] $\kappa$ is Shelah, i.e., for all $f: \kappa \rightarrow \kappa$, there is an elementary embedding $j: V \rightarrow M$ with $\crit(j) = \kappa$ and $V_{j(f)(\kappa)} \subseteq M$.
			\item[(2)] $\kappa$ is inaccessible and if $T \subseteq \mathcal L^2(\wedge^\infty, \exists^\infty, \vee^\kappa, \forall^\kappa)[\tau]$ is a theory of size $\kappa$ such that for every $\varphi \in T$ there is $\tau_\varphi \subseteq \tau$ such that $|\tau_\varphi| < \kappa$ and $\varphi \in \mathcal L^2(\wedge^\infty, \exists^\infty, \vee^\kappa, \forall^\kappa)[\tau_\varphi]$ and all $< \kappa$-sized subsets of $T$ have a model of size $< \kappa$, then $T$ has a model.
		\end{enumerate}
		\begin{proof}
			First assume (1). Clearly, a Shelah cardinal is measurable, so in particular inaccessible. To show the second part of (2), let $T = \{\varphi_i \colon i < \kappa \} \subseteq \mathcal L^2(\wedge^\infty, \exists^\infty, \vee^\kappa, \forall^\kappa)$ be a theory over a vocabulary $\tau$ such that every $\varphi_i$ only uses $< \kappa$ many symbols from $\tau$ and such that all its $< \kappa$-sized subsets are satisfiable by a model of size $< \kappa$. Then  writing $T_\alpha = \{\varphi_i \colon i < \alpha \}$ for $\alpha < \kappa$, every $T_\alpha$ is a theory of size $< \kappa$ over a vocabulary $\tau_\alpha$ of size $< \kappa$ which we can assume to be in $V_\kappa$. This implies that $|\tau| \leq \kappa$ and further we can write $\tau = \bigcup_{\alpha < \kappa} \tau_\alpha$ as an increasing union. Further, every $T_\alpha$ has a model $M_\alpha$ of size $< \kappa$ and hence without loss of generality we can assume that $M_\alpha \in V_\kappa$. Then we can let $f: \kappa \rightarrow \kappa$ be a function such that $M_\alpha \in V_{f(\alpha)}$. Because $\kappa$ is Shelah, there is $j: V \rightarrow M$ with $\crit(j) = \kappa$ and $V_{j(f)(\kappa)} \subseteq M$. Because Shelahness of $\kappa$ is witnessed by $\kappa$-complete extenders with length some $\lambda$ which is a strong limit cardinal of cofinality $\cof(\lambda) \geq \kappa$, we may without loss of generality assume that $M^{{<}\kappa} \subseteq M$. Then in $M$, $j(T)$ is of the form $\{\varphi_i^* \colon i < j(\kappa) \}$ for some formulas $\varphi_i^*$, and $T_\kappa^* = \{\varphi_i^* \colon i < \kappa \}$ is satisfiable by a model $\mathcal B \in V_{j(f)(\kappa)}$. Further $j `` T = \{j(\varphi_i) \colon i < \kappa\} \subseteq \{\varphi_i^* \colon i < \kappa \} = T_\kappa^*$, so $M \models ``\mathcal B \models j(\varphi)"$ for all $\varphi \in T$. Note that $j(\tau) \supseteq j``\tau = \tau$ and so $\mathcal B \upharpoonright \tau$ makes sense. We show that $\mathcal B \upharpoonright \tau \models T$ by showing the following claim:
			
			\begin{claim} For any formula $\varphi \in \mathcal L^2(\wedge^\infty, \exists^\infty, \vee^\kappa, \forall^\kappa)[\tau]$ with a set of free variables $S$ and any assignment $f: j(S) \rightarrow B \cup \bigcup_{n \in \omega} \mathcal P(B^n)$ in $M$, if $M \models ``\mathcal B \models j(\varphi)[f]"$, then $\mathcal B \upharpoonright \tau \models \varphi[f \circ j]$. 
			\end{claim}
			
			We show this by induction on the complexity of $\varphi$. If $\varphi = R(x_1, \dots, x_n)$ for some $R \in \tau$, then $R \in V_\kappa$ and so $j(\varphi) = R(j(x_1), \dots, j(x_n))$. Then the claim is clear by definition. 
			
			\medskip If $\varphi = \bigvee_{i < \gamma} \psi_i$ for some $\gamma < \kappa$, then $j(\varphi) = \bigvee_{i < \gamma} j(\psi_i)$. So if $\mathcal M \models ``\mathcal B \models j(\varphi)[f]"$, then $\mathcal M \models ``\mathcal B \models j(\psi_i)[f]"$ for some $i < \gamma$. Then by induction hypothesis $\mathcal B \upharpoonright \tau \models \psi_i[f \circ j]$, so also $\mathcal B \upharpoonright \tau \models \bigvee_{i <  \gamma} \psi_i[f \circ j]$. 
			
			\medskip
			
			If $\varphi = \forall Q \psi$ for some set of variables $Q$ of size $< \kappa$, note that because $M^{{<} \kappa} \subseteq M$, we have $j``Q = j(Q) \in M$. Then $j(\varphi) = \forall j``Qj(\psi)$. We assume $M \models ``\mathcal B \models \forall j``Qj(\psi)[f]"$. Let $g'$ be any $Q$-variant of $f \circ j$. We have to show $\mathcal B \upharpoonright \tau \models \psi[g']$. Define an assignment $g$ on $j(S)$ by letting
			\[
			g(x) =
			\begin{cases}
			g'(j^{-1}(x)), \text{ if } x \in j``Q \\
			f(x), \text{ if } x \in j(S) \setminus j``Q.
			\end{cases}
			\]
			Note that $g'(j^{-1}(x))$ makes sense for $x \in j``Q$ and that $g$ is a $j``Q$-variant of $f$. Because $M^{{<}\kappa} \subseteq M$ and $f \in M$, every $j``Q$-variant of $f$ belongs to $M$. In particular, $g \in M$ and so $\mathcal M \models `` \mathcal B \models j(\psi)[g]"$. Then by induction hypothesis, $\mathcal B \upharpoonright \tau \models \psi [g \circ j]$. Thus it is sufficient to show that $g \circ j = g'$. For $v \in Q$, we have $j(v) \in j``Q$ and so $g \circ j(v) = g(j(v)) = g'(j^{-1}(j(v))) = g'(v)$. And if $v \in S \setminus Q$, then $j(v) \in j(S) \setminus j``Q$, so $g(j(v)) = f(j(v)) = g'(v)$ where the latter holds because $g'$ is a $j``Q$-variant of $f \circ j$.
			
			\medskip If $\varphi = \bigwedge_{i < \delta} \psi_i$ for $\delta$ any ordinal, then $j(\varphi) = \bigwedge_{i < j(\delta)} \psi_i^*$ for some $\psi_i^*$. We assume $M \models ``\mathcal B \models \bigwedge_{i < j(\delta)} \psi_i^*[f]"$. Then if $k < \delta$, we have that $j(\psi_k)$ is among the $\psi_i^*$, so $M \models ``\mathcal B \models j(\psi_k)[f]"$. Then by induction hypothesis, $\mathcal B \upharpoonright \tau \models \psi_k[f \circ j]$. So overall, $\mathcal B \upharpoonright \tau \models \bigwedge_{i < \delta} \psi_i[f \circ j]$. 
			
			\medskip Finally, if $\varphi = \exists Q \psi$, where $Q$ is a set of variables of any size, then $j(\varphi) = \exists j(Q) j(\psi)$. We assume $M \models ``\exists j(Q) j(\psi)[f]"$. Then there is a $j(Q)$-variant $g$ of $f$ such that $M \models `` \mathcal B \models j(\psi)[g]".$ By induction hypothesis $\mathcal B \upharpoonright\tau \models \psi[g \circ j]$. Now if $x \in S \setminus Q$, then $j(x) \in j(S) \setminus j(Q)$, so $g(j(x)) = f(j(x))$. So $g \circ j$ is a $Q$-variant of $f \circ j$ with $\mathcal B \models \psi[g \circ j]$. Therefore $\mathcal B \models \exists Q \psi[f \circ j]$. 
			
			\medskip
			
			And now assume (2) and let $f: \kappa \rightarrow \kappa$. Without loss of generality, we can assume that $f$ is increasing. We want to produce $j: V \rightarrow M$ with $\crit(j) = \kappa$ and $V_{j(f)(\kappa)} \subseteq M$. For this purpose consider the sentence $\psi = \exists(X_A \colon A \in \mathcal P(V_\kappa^{< \omega}))(\bigwedge_{i = 1}^8 \psi_i)$ where $\psi_1$ to $\psi_6$ are as in the proof of Theorem \ref{thm:LS-Pin-strong} and where $\psi_7 = \forall x,y(F(x,y) \leftrightarrow X_f(x,y)) \wedge ``F \text{ is a total function}"$ with $F$ a new 2-place predicate symbol and $X_f$ the variable corresponding to $f \subseteq V_\kappa^2$. Further, take for $\alpha < \kappa$ a formula $\sigma_\alpha(x)$ of $\mathcal L_{\kappa \omega}$ that defines $\alpha$, i.e., with the property that if $M$ is transitive and $x \in M$, then $(M,\in) \models \sigma_\alpha(x)$ iff $x = \alpha$. Now let 
			\[
			\psi_8 = \bigwedge_{\alpha < \kappa} \forall x(X_\alpha(x)\leftrightarrow \bigvee_{\beta < \alpha} \sigma_\beta(x)).
			\]
			Then if $(M, \in, F^M) \models \psi$ is a transitive model, then there is a homomorphism of $\mathscr P$-structures $h: \mathscr P_{V_\kappa} \rightarrow \mathscr P_M$ such that $h(f) = F^M$ by letting $h(A) = X_A^M$ for $A \in \mathcal P(V_\kappa^{< \omega})$ where $(X_A^M \colon A \in \mathcal P(V_\kappa^{< \omega}))$ is the sequence witnessing that $(M, \in, F^M) \models \psi$. Equipped with $\psi$, take for every $\alpha \leq \kappa$ a new constant symbol $c_\alpha$ and consider the theory
			\[
			T = \{\psi\} \cup \{\Phi\} \cup \{``c_\alpha \text{ and } c_\beta \text{ are ordinals with } c_\alpha < c_\beta" \colon \alpha < \beta \leq \kappa \},
			\]
			where $\Phi$ is Magidor's $\Phi$ (cf.\ Fact \ref{fact:MagPhi}). Then any model of $T$ can without loss of generality be assumed to be of the form $(V_\delta, \in, f', c_\alpha^{V_\delta})_{\alpha \leq \kappa}$. Any model of $T$ has to contain an ordinal of order type $\geq \kappa$ by the last part of the theory and so $\delta > \kappa$. Since $(V_\delta, \in, f', c_\alpha^{V_\delta})_{\alpha \leq \kappa} \models \psi$, it gives rise to a homomorphism $h: \mathscr P_{V_\kappa} \rightarrow \mathscr P_{V_\delta}$ in the manner pointed out above. Note that by $\psi_7$ we have $h(f) = f'$.  By Fact \ref{fact:p-str}, this gives rise to an elementary embedding $j: V \rightarrow M$ such that $V_\delta \subseteq j(V_\kappa)$ and $h(A) = j(A) \cap V_\delta^{< \omega}$ for all $A \subseteq V_\kappa^{< \omega}$. In particular $f' = h(f) = j(f) \cap V_\delta^2$ and so $j(f)(\kappa) = f'(\kappa) < \delta$. Thus $V_{j(f)(\kappa)} \subseteq V_\delta \subseteq j(V_\kappa) \subseteq M$. So we only have to show that $\crit(j) = \kappa$ to see that $\kappa$ is Shelah. By our choice of $T$, $V_\delta$ contains an ordinal of order type $\kappa + 1$ and so we have $\delta > \kappa$. Because $V_\delta \subseteq j(V_\kappa)$ we have to have $\crit(j) \leq \kappa$. And further, for $\alpha < \kappa$, we have by usage of $\psi_8$ that $\alpha = h(\alpha) = j(\alpha) \cap V_\delta$ and so $\crit(j) = \kappa$. 
			
			\medskip Left to show is that $T$ is satisfiable. Clearly $T$ is of size $\kappa$ and every sentence in $T$ only uses finitely many symbols from $T$'s vocabulary. So by our assumption it is sufficient to show that every $< \kappa$ sized subset $T_0$ of $T$ has a model of size $< \kappa$. So let $T_0$ be such a subset and $\gamma = \sup\{\beta <  \kappa \colon ``c_\alpha < c_\beta" \in T_0 \} < \kappa$ and take any ordinal $\eta$ with $\gamma < \eta < \kappa$ such that $\eta$ is closed under $f$. Note that such an $\eta$ exists because $\kappa$ is regular. Then let $c_\alpha^{V_\eta} = \alpha$ for $\alpha \leq \gamma$ and $c_\kappa^{V_{\eta}} = \gamma + 1$. Then $(V_\eta, \in, f \upharpoonright \eta, c_\alpha^{V_\eta},c_\kappa^{V_\eta} )_{\alpha \leq \gamma} \models T_0$, because by Fact \ref{fact:trivialHom}, $V_\eta \subseteq V_\kappa$ implies that $A \mapsto A \cap V_\eta^{< \omega}$ for $A \subseteq V_\kappa^{< \omega}$ defines a (trivial) homomorphism $\mathscr P_{V_\kappa} \rightarrow \mathscr P_{V_\eta}$ and the rest of $T_0$ is satisfied by our choice of the constants. Finally, $|V_\eta| < \kappa$, as $\kappa$ is inaccessible.
		\end{proof}
	\end{theorem}

	\bibliography{bibliography}{}
	\bibliographystyle{alpha}

\end{document}